\long\def\comment#1\endcomment{}
\theoremstyle{plain}
\newtheorem{theorem}{\sc Theorem}[section]
\newtheorem{lemma}[theorem]{\sc Lemma}
\newtheorem{prop}[theorem]{\sc Proposition}
\newtheorem{coroll}[theorem]{\sc Corollary}
\theoremstyle{plain}
\newtheorem{defn}[theorem]{\sc Definition}
\theoremstyle{exercise}
\newtheorem{remark}[theorem]{\sc Remark}
\makeatletter \@addtoreset{equation}{section} \makeatother
\def\eqref#1{\thetag{\ref{#1}}}
\let\latexref=\ref
\def\ref#1{{\normalfont{\latexref{#1}}}}
\newcommand{\ldot}{{\:\raisebox{2,3pt}{\text{\circle*{1.5}}}}}
\def\dlim_#1{{\displaystyle\lim_{#1}}^\hdot}
\newcommand{\cchar}{\operatorname{\sf char}}
\newcommand{\End}{\operatorname{End}}
\newcommand{\Ker}{\operatorname{{\rm Ker}}}
\newcommand{\id}{\operatorname{\rm id}}
\newcommand{\Mod}{\mathrm{Mod}}
\renewcommand{\Bar}{\mathrm{Bar}}
\newcommand{\Cobar}{\mathrm{Cobar}}
\newcommand{\Hom}{\mathrm{Hom}}
\newcommand{\Ind}{\mathrm{Ind}}
\newcommand{\Coind}{\mathrm{Coind}}
\newcommand{\RHom}{\mathrm{RHom}}
\newcommand{\op}{\mathrm{op}}
\newcommand{\colim}{\mathrm{colim}}
\newcommand{\pr}{\mathrm{pr}}
\newcommand{\Sets}{\mathscr{S}ets}
\newcommand{\Vect}{\mathscr{V}ect}
\newcommand{\Coalg}{{\mathscr{C}oalg}}
\newcommand{\Alg}{{\mathscr{A}lg}}
\newcommand{\CE}{\mathrm{CE}}
\renewcommand{\k}{\Bbbk}
\renewcommand{\cchar}{\mathrm{char}\ }
\newcommand{\Res}{\mathrm{Res}}
\newcommand{\ev}{\mathrm{ev}}
\newcommand{\Def}{\mathrm{Def}}
\newcommand{\ad}{\mathrm{ad}}
\newcommand{\Op}{{\mathscr{O}p}}
\newcommand{\Tw}{\mathrm{Tw}}
\newcommand{\sgn}{\mathit{sgn}}
\newcommand{\ash}{\textup{!`}}
\newcommand{\Conv}{\mathrm{Conv}}
\newcommand{\MC}{\mathrm{MC}}
\newcommand{\WBimod}{{\mathscr{WB}}}
\newcommand{\pronilp}{\mathrm{pronilp}}
\newcommand{\M}{\mathcal{M}}
\newcommand{\lev}{\mathit{lev}}
\newcommand{\rep}{\mathrm{rep}}
\title{\sc{On a higher structure on the operadic deformation complexes $\Def(e_n\to\mathcal{P})$}}
\author{\sc{Boris Shoikhet}}
\date{}
\begin{document}\maketitle

{\footnotesize
\begin{center}{\parbox{4,5in}{{\sc Abstract.}
In this paper, we prove that there is a canonical homotopy $(n+1)$-algebra structure on the shifted operadic deformation complex $\Def(e_n\to\mathcal{P})[-n]$ for any operad $\mathcal{P}$ and a map of operads $f\colon e_n\to\mathcal{P}$. This result generalizes the result of [T2], where the case $\mathcal{P}=\End_\Op(X)$ was considered. Another more computational proof of the same statement was recently sketched in [CW]. 

Our method combines the one of [T2] with the categorical algebra on the category of symmetric sequences, introduced in [R] and further developed in [KM] and [Fr1]. 
We define suitable deformation functors on $n$-coalgebras, which are considered as the ``non-commutative'' base of deformation, prove their representability, and translate properties of the functors to the corresponding properties of the representing objects. A new point, which makes the method more powerful, is to consider the argument of  our deformation theory as an object of the category of symmetric sequences of dg vector spaces, not as just a single dg vector space.

}}
\end{center}
}

\section{\sc Introduction}
\subsection{\sc }\label{section00}
In the beautiful paper [T2], Dima Tamarkin proved that, for an algebra $X$ over the operad $\mathsf{e}_n$, $n\ge 2$, the deformation complex $\Def(X)[-n]$ admits a natural structure of homotopy $(n+1)$-algebra. Here $\mathsf{e}_n=H_\ldot(E_n,\k)$ is the homology operad of the $n$-dimensional little discs operad $E_n$, and $\cchar\k=0$. In this paper, we consider $\mathsf{e}_n$ as a {\it non-unitary} operad, in the sense of [F2], that is, $\mathsf{e}_n(0)=0$.

If one considered the case $n=1$ and took the operad $\mathsf{Assoc}$ for $\mathsf{e}_1$, the claim would be the famous Deligne conjecture for Hochschild cochains; all known proofs of it use transcendental methods such as Drinfeld associators.\footnote{See Remark \ref{remintro} below.} The result of [T2] is proven purely algebraically, without any transcendental methods. Moreover if one took the Poisson operad for $\mathsf{e}_1$, the method of [T2] would work as well. 

Notice that the deformation complex $\Def_\mathcal{O}(X)$ of an algebra $X$ over a Koszul operad $\mathcal{O}$ is the same that the deformation complex of the corresponding map of operads
$\Def(\mathcal{O}\to \End_\Op(X))$, where $\End_\Op(X)$ is the endomorphism operad of $X$, $\End_\Op(X)(n)=\Hom_\k(X^{\otimes n},X)$. 

In this paper, we provide a proof of similar statement for the case of the general deformation complex $\Def(\mathsf{e}_n\xrightarrow{f}\mathcal{P})$, where $\mathcal{P}$ is an operad and $f$ is a map of operads. 

Throughout the paper, $\k$ denotes a field of characteristic 0. Denote by $\Vect(\k)$ the $\k$-linear abelian category of (unbounded) complexes of $\k$-vector spaces. 

We prove here the following statement:

\begin{theorem}\label{theorem1}
For any operad $\mathcal{P}$ in $Vect(\k)$, and a morphism of operads $t\colon \mathsf{e}_n\to\mathcal{P}$, the shifted deformation complex 
\begin{equation}
\Def(\mathsf{e}_n\xrightarrow{t}\mathcal{P})[-n]
\end{equation}
admits a natural structure of a homotopy $(n+1)$-algebra. Its underlying homotopy $\mathsf{Lie}\{-n\}$-structure is strict and is given by the operadic convolution Lie algebra.\footnote{For the case $\mathcal{P}=\End_\Op(X)$, this bracket can be thought of as the Gerstenhaber-like bracket.} 
\end{theorem}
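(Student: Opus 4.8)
\emph{Overall approach.} I would follow the deformation-theoretic strategy of [T2] but run it over the \emph{non-commutative base} of $n$-coalgebras, using the categorical algebra of symmetric sequences of [R], [KM], [Fr1]. The starting point is to realize the deformation complex as a convolution object. Writing $\e_n^{\ash}$ for the Koszul dual cooperad of $\e_n$ (reflecting, up to operadic suspension, the Koszul self-duality $\e_n^{\ash}\simeq \e_n^{\vee}\{n\}$), one has
\begin{equation}
\Def(\e_n \xrightarrow{t}\mathcal{P}) \;=\; \Conv(\e_n^{\ash},\mathcal{P})^{t},
\end{equation}
the operadic convolution Lie algebra $\Hom_{\mathbb{S}}(\e_n^{\ash},\mathcal{P})$ with differential twisted by the Maurer--Cartan element determined by $t$. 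The convolution bracket is an honest graded Lie bracket, so after the shift $[-n]$ it is already a \emph{strict} $\lie\{-n\}$-structure; verifying that its degree is correct and that it matches the bracket named in the statement is a direct computation. Hence the real content is to upgrade this bracket to a full homotopy $(n+1)$-algebra, i.e.\ to a homotopy $\e_{n+1}$-structure, equivalently a square-zero coderivation of the cofree $\e_{n+1}^{\ash}$-coalgebra on $\Def(\e_n\to\mathcal{P})[-n]$.

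\emph{Core construction.} The conceptual input is the additivity equivalence $\e_{n+1}\simeq \e_n\otimes\e_1$: the associative ($\e_1$) direction is supplied by the Maurer--Cartan/deformation parameter, while the $\e_n$ direction is supplied by taking the \emph{base} of the deformation problem to be an $n$-coalgebra rather than an Artinian ring. Concretely, I would define a deformation functor
\begin{equation}
\DDef_t \colon \Coalg_{\e_n} \longrightarrow \Sets,
\qquad
C \longmapsto \MC\bigl(\Conv(\e_n^{\ash},\mathcal{P}) \mathbin{\widehat\otimes} C\bigr)/\!\sim ,
\end{equation}
on the category of conilpotent $n$-coalgebras, where $\mathbin{\widehat\otimes}$ is the completed tensor product. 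The decisive technical move, and the novelty over [T2], is to let the argument $C$ live in \emph{symmetric sequences} of dg vector spaces rather than in plain complexes, so that $C$ ranges over $n$-coalgebras internal to symmetric sequences. The higher operations we are after are then recorded in how $\DDef_t$ interacts with the symmetric-monoidal and plethystic (composition) structures on symmetric sequences developed in [R], [KM], [Fr1].

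\emph{Representability and transfer of structure.} The heart of the argument is to prove that $\DDef_t$ is representable by a universal $n$-coalgebra $\mathcal{U}$ in symmetric sequences, that is $\DDef_t(C)\cong \Hom_{\Coalg_{\e_n}}(C,\mathcal{U})$ naturally in $C$. Here the categorical algebra of symmetric sequences provides the cofree $n$-coalgebra functor and the adjunctions needed for such a representing object to exist and to be computed. Its universal property then transports the $n$-coalgebra structure of the base into algebraic structure on the predual of $\mathcal{U}$, which I would identify with $\Def(\e_n\to\mathcal{P})[-n]$; dualizing an $n$-coalgebra structure yields an $\e_n$-algebra structure, and combining it with the $\e_1$-direction coming from the Maurer--Cartan deformations assembles, via the additivity identification, into the desired homotopy $(n+1)$-algebra, with the convolution bracket recovered strictly as its $\lie\{-n\}$-part.

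\emph{Main obstacle.} The principal difficulty is the representability step together with the precise identification of the representing object. Establishing representability of $\DDef_t$ requires controlling (co)limits, cofree $n$-coalgebras, and the relevant conilpotency/convergence conditions in the category of symmetric sequences of complexes, and checking that the pertinent adjunctions behave well under the twisted differential. Even granting representability, one must verify that the structure read off from $\mathcal{U}$ is genuinely a homotopy $\e_{n+1}$-algebra --- that \emph{all} the coherence homotopies for the $\e_{n+1}$-relations are produced and are mutually compatible --- rather than a weaker or differently shifted structure. It is precisely to make these two steps tractable that the whole construction is carried out in symmetric sequences: the extra variable rigidifies the functor enough that its representing object, and hence the induced higher structure, can be pinned down.
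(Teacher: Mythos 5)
Your overall strategy is indeed the paper's: Tamarkin's deformation theory run over the non-commutative base of pro-conilpotent $\mathsf{e}_n^*$-coalgebras, with all arguments promoted to symmetric sequences, representability of the deformation functor, and transfer of structure to the representing object. But two steps in your outline conceal genuine gaps. First, your functor $C\mapsto\MC\bigl(\Conv(\mathsf{e}_n^\ash,\mathcal{P})\,\widehat{\otimes}\,C\bigr)/\!\sim$ is not the right one: quotienting by gauge equivalence destroys strict representability (the paper's functors are honest sets of coalgebra maps $\phi\colon a\star\Bar_n(X)\to\Bar_n(Y)$ with $\phi\circ\eta=\Bar(f)$, equivalently MC elements with a boundary condition, with no quotient taken), and, more seriously, a one-object functor attached to the single map $t$ gives you no source for the extra multiplication. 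The $\mathsf{e}_1$-direction does not come ``from the Maurer--Cartan parameter'': it comes from the composition maps $_\mathcal{Q}G_{Y,Z}^g(a')\times{}_\mathcal{Q}G_{X,Y}^f(a)\to{}_\mathcal{Q}G_{X,Z}^{gf}(a\otimes a')$ for chains $X\xrightarrow{f}Y\xrightarrow{g}Z$ of $\mathsf{e}_n{-}\mathcal{Q}$-bimodules --- available only because the general $t\colon\mathsf{e}_n\to\mathcal{P}$ is first rewritten as a bimodule situation via the relative inner Hom and $[\mathcal{P},\mathcal{P}]_\mathcal{P}=\mathcal{P}$ --- and these maps make the representing object $A_\rep(\id_X)=\Bar_{\mathsf{e}_n\{n\}}(\Def^n(X\xrightarrow{\id}X)_\mathcal{Q}[1])$ a monoid in $\mathsf{e}_n^*$-coalgebras, i.e.\ an $\mathsf{e}_n^*$-bialgebra. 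Your proposal never produces this bialgebra, and without it there is no $(n+1)$-st bracket.

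Second, your appeal to Dunn additivity $\mathsf{e}_{n+1}\simeq\mathsf{e}_n\otimes\mathsf{e}_1$ is a heuristic, not a mechanism: making it precise at the chain level is itself a hard theorem, and the paper avoids it entirely. What actually assembles the homotopy $(n+1)$-structure is (a) the Hopf diagonal on $\mathsf{e}_n$, which gives the $\mathsf{e}_n\{n\}$-algebra structure on the deformation complex \emph{before} representability (one needs it even to write down $A_\rep$ as a bar construction), with invariance under the twist by $d_f$ coming from $\Tw(\mathsf{Lie}\{1\}\to\mathsf{e}_n\{n\})\simeq\mathsf{e}_n\{n\}$; and (b) the Milnor--Moore theorem applied to the underlying cocommutative bialgebra of $A_\rep(\id)$: the primitives $\mathsf{Lie}^*\{n-1\}\circ X$ acquire a Lie bracket compatible with the cobracket, so the Chevalley--Eilenberg complex becomes a dg coalgebra over $\bigl(\mathsf{Comm}^*\{-1\}\circ\mathsf{Lie}^*\{n-1\}\bigr)\{-n\}=\mathsf{e}_{n+1}^*\{-n-1\}$, which is by definition a homotopy $(n+1)$-algebra structure on $X[-n]$. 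This is precisely what produces all coherence homotopies at once --- the point your ``main obstacle'' paragraph leaves open. Finally, identifying the strict Lie part with the convolution bracket is not ``a direct computation'' on this route: it requires comparing the two deformation theories through the lax-monoidal right adjoint $R\colon\Coalg_n\to\Coalg$ of the inclusion, proving $R(A_\rep(\id))=a_\rep(\id)$ \emph{as monoids} by Yoneda, and using the vanishing of the unshifted convolution bracket for $f=\id$.
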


For the case of deformation complex $\Def(\mathsf{e}_n\xrightarrow{\id}\mathsf{e}_n)[-n]$, the corresponding Lie bracket of degree $-n$ is homotopically trivial, and the complex $\Def(\mathsf{e}_n\xrightarrow{\id}\mathsf{e}_n)[-n]$ becomes a homotopy $(n+2)$-algebra. We are going to discuss it elsewhere.

\begin{remark}\label{remintro}{\rm
Strictly speaking, the Deligne conjecture is the statement that there is an action of the chain operad $C(E_2,\k)$ on the Hochschild cochains. For this statement, several proofs which work over $\mathbb{Q}$ without any transcendental methods are known, see [MS1,2], [BB], [BF], and [B1,2]+[T4]. On the other hand, a homotopy 2-algebra is an algebra over the operad $\mathsf{hoe}_2$ which is the Koszul resolution of the operad $\mathsf{e}_2$. Any known construction of quasi-isomorphism of operads $\mathsf{hoe}_2\to C(E_2,\k)$ uses transcendental methods.
}
\end{remark}

\comment
\subsection{\sc }
There is a re-interpretation of Theorem \ref{theorem1} via the derived Hom in the category of {\it weak bimodules} over an operad.

A map of operads $f\colon \mathcal{O}\to\mathcal{P}$ makes $\mathcal{P}$ a {\it weak bimodule over $\mathcal{O}$}.
The deformation complex complex $\Def(\mathcal{O}\xrightarrow{f}\mathcal{P})$ is identified with the derived Hom in the category $\WBimod(\mathcal{O})$ of weak $\mathcal{O}$-bimodules:
\begin{equation}\label{formulawb}
\Def(\mathcal{O}\xrightarrow{f}\mathcal{P})[1]=\RHom_{\WBimod(\mathcal{O})}(\mathcal{O},f^*\mathcal{P})
\end{equation}
For completeness, we provide a proof \eqref{formulawb} in Appendix.

Then we get
\begin{theorem}\label{theorem2}
Let $f\colon \mathsf{e}_n\to\mathcal{P}$ be a map of operads. Then $\RHom_{\WBimod(\mathsf{e}_n)}(\mathsf{e}_n,f^*\mathcal{P})[-n-1]$ has a natural structure of a homotopy $(n+1)$-algebra.
\end{theorem}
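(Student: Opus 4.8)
The plan is to deduce Theorem~\ref{theorem2} from Theorem~\ref{theorem1} by means of the identification~\textup{(\ref{formulawb})}, whose proof I would relegate to the Appendix. Since that identification is the only substantive ingredient beyond Theorem~\ref{theorem1}, I describe both the (formal) deduction and the strategy for~\textup{(\ref{formulawb})}.

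First I would specialize~\textup{(\ref{formulawb})} to $\mathcal{O}=\mathsf{e}_n$ and the given map $f$, obtaining
\[
\RHom_{\WBimod(\mathsf{e}_n)}(\mathsf{e}_n,f^*\mathcal{P})=\Def(\mathsf{e}_n\xrightarrow{f}\mathcal{P})[1].
\]
Applying the shift $[-n-1]$ to both sides and using $[1][-n-1]=[-n]$ gives
\[
\RHom_{\WBimod(\mathsf{e}_n)}(\mathsf{e}_n,f^*\mathcal{P})[-n-1]=\Def(\mathsf{e}_n\xrightarrow{f}\mathcal{P})[-n].
\]
By Theorem~\ref{theorem1} the right-hand side carries a natural homotopy $(n+1)$-algebra structure, and transporting this structure along the displayed equality yields the assertion. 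Naturality in $f$ and $\mathcal{P}$ is then inherited from that of~\textup{(\ref{formulawb})} and of the structure produced in Theorem~\ref{theorem1}.

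To establish~\textup{(\ref{formulawb})} I would proceed as follows. The weak (infinitesimal) bimodules over $\mathcal{O}$ form a dg category $\WBimod(\mathcal{O})$, with $\mathcal{O}$ a weak bimodule over itself and $f^*\mathcal{P}$ the restriction of $\mathcal{P}$ along $f$. The forgetful functor to symmetric sequences has a left adjoint $F$, the free weak-bimodule functor, whose value on a symmetric sequence $M$ is assembled from trees carrying exactly one $M$-vertex and all remaining vertices decorated by $\mathcal{O}$; the adjunction then gives $\Hom_{\WBimod(\mathcal{O})}(F(M),N)\cong\Hom(M,N)$ as symmetric sequences. The associated bar (cotriple) resolution $B(\mathcal{O})\to\mathcal{O}$ is a functorial cofibrant replacement of $\mathcal{O}$ in $\WBimod(\mathcal{O})$, so that $\RHom_{\WBimod(\mathcal{O})}(\mathcal{O},f^*\mathcal{P})$ is computed by $\Hom_{\WBimod(\mathcal{O})}(B(\mathcal{O}),f^*\mathcal{P})$.

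Finally I would apply the adjunction degreewise: the left and right $\mathcal{O}$-legs of $B(\mathcal{O})$ collapse, leaving maps out of the reduced bar construction $\bar B(\mathcal{O})$ into $\mathcal{P}$, i.e.\ the convolution complex underlying $\Def(\mathcal{O}\xrightarrow{f}\mathcal{P})$, while the homological suspension built into the bar construction accounts for the shift by $[1]$. The hard part will be this last identification carried out \emph{on the nose}: one must check that the internal differential of $B(\mathcal{O})$, transported through the adjunction, coincides with the convolution differential on $\Def(\mathcal{O}\xrightarrow{f}\mathcal{P})$ (matching signs and the suspension), and that $B(\mathcal{O})$ is genuinely cofibrant so that $\Hom$ indeed computes $\RHom$. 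Once~\textup{(\ref{formulawb})} is secured with these compatibilities, the formal deduction above completes the proof.
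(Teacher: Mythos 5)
Your proposal follows exactly the paper's route: the paper obtains this statement as an immediate corollary of Theorem \ref{theorem1} via the identification $\Def(\mathcal{O}\xrightarrow{f}\mathcal{P})[1]=\RHom_{\WBimod(\mathcal{O})}(\mathcal{O},f^*\mathcal{P})$ specialized to $\mathcal{O}=\mathsf{e}_n$ (so that the shift $[1][-n-1]=[-n]$ lands in Theorem \ref{theorem1}), with the proof of that identification deferred to an appendix, just as you defer it. Your free-weak-bimodule/bar-resolution sketch of the identification is the standard argument and, if anything, supplies more detail than the paper itself records for that step, so the two approaches coincide.
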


Theorem 2 provides a linear counter-part of the topological conjecture in [HS] and [Tur], proven there for $n=1$.
\endcomment

\subsection{\sc }
When the paper had almost been completed, the author found it out that the recent paper [CW, Section 3] contains a sketch of another proof of Theorem \ref{theorem1}, based on different ideas. The proof relies on a nice construction loc. cit., Section 3.1, providing an operadic twisting interpretation of the Kontsevich-Soibelman operad [KS, Sect. 5], what makes it possible to define its counterpart for any Hopf cooperad. The authors re-interpret the construction in [T2] as a map of graded operads $\mathsf{hoe}_{n+1}\to \mathsf{Br}_{n+1}$ (in the notations of [CW]). After that, everything reduces to a lengthy computation of the compatibility of this map with the differentials, loc.cit. Section 3.2 and Appendix A. Unfotunately, this computation was only sketched.

An advantage of our approach, compared with loc.cit., is being more conceptual and categorical, and not relying on computations.
The author thinks that this paper, even though being just an account on another proof, has its own right for existence. 

\subsection{\sc }\label{section01}
Let us outline the methods we employ to prove Theorem \ref{theorem1}. 
We combine the methods of [T2] (see Section \ref{sectionintrotam} below) with the ``categorical algebra'' on the category of symmetric sequences, developed in [R], [KM], [St].

The operads are defined as monoids in the category of symmetric sequences, with respect to the composition product. The composition product admits an inner Hom which is right adjoint with respect to the left factor, denoted by $[-,-]$.
In particular, for any symmetric sequence $X$, the inner Hom $[X,X]$ is an operad; the operad $\End_\Op(V)$ for a vector space $V$ is recovered as $[V^{(0)},V^{(0)}]$ where the symmetric sequence $V^{(0)}$ is $V^{(0)}(0)=V$ and $V^{(0)}(n)=0$ for $n\ne 0$. 

One can talk on left modules over an operad $\mathcal{O}$ in the category of symmetric sequences; it is a symmetric sequence $X$ with a map $\mathcal{O}\circ X\to X$ which is associative and such that $\id\in \mathcal{O}(1)$ acts as identity on $X$. A conventional algebra $V$ over $\mathcal{O}$ is recovered as the case of the symmetric sequence $V^{(0)}$ defined just above. To give a left $\mathcal{O}$-module structure on a symmetric sequence $X$ is the same as to give an operad maps $\mathcal{O}\to [X,X]$.

One can as well talk on right $\mathcal{Q}$-modules, where $\mathcal{Q}$ is an operad. In contrast with the left $\mathcal{Q}$-modules, the right $\mathcal{Q}$-modules form a $\k$-linear abelian category. For two right $\mathcal{Q}$-modules $X,Y$, one can define the relative internal hom $[X,Y]_\mathcal{Q}$, and develop the corresponding ``categorical algebra''. In particular, $[X,X]_\mathcal{Q}$ becomes an operad. It is due to [R], with subsequent developent made in [KM], [F1], [St].

If a symmetric sequence $X$ is an $\mathcal{O}{-}\mathcal{Q}$-bimodule, one gets a map of operads 
\begin{equation}\label{intro1}
\mathcal{O}\to [X,X]_\mathcal{Q}
\end{equation}

Our first goal is to extend the result of Tamarkin to a map of operad $f\colon \mathsf{e}_n\to [X,X]$, where $X$ is a symmetric sequence (originally it was proven for the case when $X$ has only arity 0 non-zero component). It does not meet any trouble. We  mention that the bar-complex $\Bar_\mathcal{O}(X)$ is defined as a {\it symmetric sequence} with a component-wise differential.

As the next step, we take a $\mathsf{e}_n{-}\mathcal{Q}$-bimodule $X$, and extend the Tamarkin theory [T2] for the corresponding operad morphism $\mathsf{e}_n\to [X,X]_\mathcal{Q}$, see \eqref{intro1}.
To this end, we prove the ``$\mathcal{Q}$-relative'' version of the classical statement, expressing maps of dg coalgebras $C\to \Bar_\mathcal{O}(X)$ as the Maurer-Cartan elements in the convolution Lie algebra $\Hom(C/\k,X)[-1]$ (here $\mathcal{O}$ is a Koszul operad such that the cooperad $\mathcal{O}^\ash$ is biaugmented, and the $\mathcal{O}^\ash$-coalgebra $C$ is pro-conilpotent, as well as for the classical case\footnote{See Section \ref{subsectioncoalg} for the definitions of a bi-augmented cooperad and a pro-conilpotent coalgebra over it}), see Section \ref{subsubsectionlierel}.

Then we apply it to the case $X=\mathcal{P}$, considered as a right module over $\mathcal{Q}=\mathcal{P}$. A map of operads $f\colon \mathcal{O}\to\mathcal{P}$ makes $\mathcal{P}$ an $\mathcal{O}{-}\mathcal{P}$-bimodule. One has:
\begin{equation}
[\mathcal{P},\mathcal{P}]_\mathcal{P}=\mathcal{P}
\end{equation}
as an operad, and the map \eqref{intro1}
\begin{equation}
\mathcal{O}\to [\mathcal{P},\mathcal{P}]_\mathcal{P}=\mathcal{P}
\end{equation}
is equal to $f$.

Thus, any map of operads appears as a case of the map \eqref{intro1}, and the general statement follows from the case of operad maps \eqref{intro1}.

\subsection{\sc }\label{sectionintrotam}
Let us briefly outline the main ideas of [T2], referring the reader to the Introduction to loc.cit. for a more detailed overview.

First of all, the deformation complex $\Def_\mathcal{O}(X)=\Def_\mathcal{O}(X\xrightarrow{\id}X)[1]$, where the r.h.s. is the deformation complex of the identity morphism of an $\mathcal{O}$-algebra $X$. One can consider more general deformation complexes $\Def_\mathcal{O}(X\xrightarrow{f}Y)$ of a morphism of $\mathcal{O}$-algebras, what provides a relative version of the initial deformation complex. The convention we adapt here is that all deformation complexes we deal with are considered with the grading making them a dg Lie algebra, that is, the underlying Lie bracket preserves the grading.

There is the following conceptual way to think on the deformation complex $\Def_\mathcal{O}(X\xrightarrow{f}Y)$. 
One associates with a morphism $f\colon X\to Y$ of algebras over a Koszul operad $\mathcal{O}$ a functor $F_{X,Y}^f$ on coaugmented cocommutative coalgebras with values in sets:
\begin{equation}\label{intro37}
F_{X,Y}^f(a)=\{\phi\in \Hom_{\Coalg(\mathcal{O}^\ash)}(a\otimes \Bar_\mathcal{O}(X),\Bar_{\mathcal{O}}(Y)), \phi\circ(\eta\otimes \id_{\Bar_{\mathcal{O}}})=\Bar(f)\}
\end{equation}
where $\eta\colon \k\to a$ is the coaugmentation map.
The bar-complexes are coalgebras over the (shifted) Koszul dual cooperad $\mathcal{O}^\ash$, and $\Hom$ is taken in the category of $\mathcal{O}^\ash$-coalgebras. The map $f$ defines a map $\Bar(f)\colon \Bar_\mathcal{O}(X)\to\Bar_\mathcal{O}(Y)$ of $\mathcal{O}^\ash$-coalgebras, and one considers the maps $\phi\colon a\otimes\Bar_\mathcal{O}(X)\to\Bar_\mathcal{O}(Y)$ of $\mathcal{O}^\ash$-coalgebras, equal to $\Bar(f)$ on the ``special (co)fiber''. Notice that, if one worked with the Schlessinger framework of deformation theory, one would consider the functor (on artinian coalgebras), defined as $a\mapsto \{t\in \Hom_{\Alg(\mathcal{O})}(\Hom_\k(a,X),Y), t\circ \eta=f\}$.
The functor $F_{X,Y}^f$ should be considered as a derived version of the latter functor; working with the Deligne-Drinfeld framework of deformation theory (which describes the deformations via the Maurer-Cartan elements in a dg Lie algebra), one should replace the functors themselves with their ``derived versions''.

If one restricts ourselves with a full subcategory of the category of cocommutative coalgebras called pro-conilpotent (aka connected in [Q], aka pro-artinian in [T2]), and assumes that the cooperad $\mathcal{O}^\ash$ is biaugmented, the functor $F_{X,Y}^f$ becomes representable:
$$
F_{X,Y}^f(a)=\Hom_{\Coalg}(a,a_\rep(f))
$$
for some pro-conilpotent cocommutative coalgebra $a_\rep(f)$, where $\Coalg$ stands for the category of pro-conilpotent cocommutative coalgebras over $\k$.  Tamarkin shows that $$a_\rep(f)=C_\CE(\Def_\mathcal{O}(X\xrightarrow{f}Y),\k)$$
where $C_\CE(-)$ stands for the Chevalley-Eilenberg chain complex of a dg Lie algebra. It defines the dg Lie algebra $\Def_\mathcal{O}(X\xrightarrow{f}Y)$ up to a quasi-isomorphism. The Lie bracket is the algebra-convolution Lie bracket, see Section \ref{section022}.

Then the composition property of the functor $F_{X,Y}^f$, saying that, for two maps
$$
X\xrightarrow{f}Y\xrightarrow{g}Z
$$
one has a (functorial) map
\begin{equation}\label{intro47}
F_{Y,Z}^g(a^\prime)\times F_{X,Y}^f(a)\to F_{X,Z}^{gf}(a^\prime\otimes a)
\end{equation}
is translated to the corresponding property of the representing coalgebras:
\begin{equation}\label{intro49}
a_\rep(f)\otimes a_\rep(g)\to a_\rep(gf)
\end{equation}
It is associative, for a chain of 3 morphisms. It follows that, for $f=\id:X\to X$, one has a monoid structure
$$
a_\rep(X)\otimes a_\rep(X)\to a_\rep(X)
$$
where $a_\rep(X)=a_\rep(\id_X)$.

The conclusion is that $a_\rep(X)=C_\CE(\Def_\mathcal{O}(X\xrightarrow{\id}X))$ becomes a cocommutative dg bialgebra (whose underying cocommutative coalgebra is cofree pro-conilpotent).
Then the Milnor-Moore theorem applied to the underlying bialgebra $a_\rep(X)$ (with forgotten differential) implies that the convolution Lie bracket on $\Def_\mathcal{O}(X\xrightarrow{\id}X)$ vanishes, and the shifted complex $\Def_\mathcal{O}(X\xrightarrow{\id}X)[1]$ gets a Lie bracket.

Indeed, the Milnor-Moore theorem says that $a_\rep(X)$ is the pro-conilpotent universal enveloping (co)algebra $\mathcal{U}(\mathfrak{g})$, where $\mathfrak{g}$ is the graded space of primitive elements in the coalgebra $a_\rep(X)$. One easily identifies $\mathfrak{g}$ with $\Def_\mathcal{O}(X\xrightarrow{\id}X)[1]$. The quadratic component of the Chevalley-Eilenberg differential on $\mathfrak{g}$ vanishes by definition. Now the Leibniz rule for the dg bialgebra $C_\CE(\Def_\mathcal{O}(X\xrightarrow{\id}X))$ shows that the quadratic component of the Chevalley-Eilenberg differential vanishes on the entire complex, therefore, the convolution Lie bracket on $\Def_\mathcal{O}(X\xrightarrow{\id}X)$ vanishes. On the other hand, $\mathfrak{g}=\Def_\mathcal{O}(X\xrightarrow{\id}X)[1]$ gets a new Lie bracket, $[a,b]=a*b-(-1)^{|a||b|}b*a$, where $a,b$ are primitive, and $-*-$ is the product on $a_\rep(X)$ (given by the monoid structure on $a_\rep(\id_X)$, as above). This Lie bracket is clearly compatible with the differential on $\Def_\mathcal{O}(X\xrightarrow{\id}X)[1]$, because the product $-*-$ is. 

It is the Lie bracket of Gerstenhaber type\footnote{Here we mean that the Lie bracket is defined similarly with the Gerstenhaber bracket on Hochschild cochain complex}.

Note that for $f\ne\id$, the algebra convolution Lie bracket on $\Def_\mathcal{O}(X\xrightarrow{f}Y)$ is not 0. Therefore, the above construction can be thought of as a sort of  ``quasi-classical limit''.

Notice that what makes all these constructions possible, is the the fact that, for any cooperad $\mathcal{C}$ and a $\mathcal{C}$-coalgebra $B$, the tensor product $a\otimes B$ with a cocommutative coalgebra $a$ is again a $\mathcal{C}$-coalgebra. On the language of operads, it follows from existence of a map of cooperads $\mathsf{Comm}^*\otimes \mathcal{C}\to\mathcal{C}$ (which holds as $\mathsf{Comm}^*=\mathsf{Comm}$ is the unit for the product $\otimes$ in symmetric sequences, so $\mathsf{Comm}^*\otimes\mathcal{C}=\mathcal{C}$).

A new fundamental idea in [T2] was to take for $a$ higher structured algebraic objects than cocommutative coalgebras. That is, D.Tamarkin considers ``deformations with non-commutative base''.
Namely, he considers the case when $\mathcal{C}=\mathcal{O}^\ash=\mathsf{e}_n^*$, the dual cooperad to the operad $\mathsf{e}_n$. The operad $\mathsf{e}_n$ is a Hopf operad, that is, there is an operad map 
\begin{equation}\label{intro77}
 \mathsf{e}_n\to\mathsf{e}_n\otimes\mathsf{e}_n
\end{equation}
and thus, for the dual cooperad one has 
\begin{equation}\label{intro78}
\mathsf{e}_n^*\otimes\mathsf{e}_n^*\to\mathsf{e}_n^*
\end{equation}
(we call it Hopf cooperad). (In fact, the coproduct like \eqref{intro77} exists for any operad in graded vector spaces defined as the homology operad of a topological operad. Indeed, for the topological spaces there is the arity-wise diagonal map. Passing to homology, it indices a coproduct).

It follows from \eqref{intro77} that the tensor product of two $\mathsf{e}_n$-algebras is again a $\mathsf{e}_n$-algebra, and it follows from \eqref{intro78} that the tensor product of two $\mathsf{e}_n^*$-coalgebras is again a $\mathsf{e}_n^*$-coalgebra.

The  case $\mathcal{C}=\mathcal{O}^\ash=\mathsf{e}_n^*$ is corresponded to $\mathcal{O}=\mathsf{e}_n\{n\}$ (by $\mathcal{O}\{1\}$ is denoted the operadic shift, see Section \ref{sectionshift}).

It follows that for $\mathcal{O}=\mathsf{e}_n\{n\}$ one can upgrade the functor $F_{X,Y}^f$ to another functor $G_{X,Y}^f$ defined exactly as \eqref{intro37}, but with $a$ a $\mathsf{e}_n^*$-coalgebra. 
So $G_{X,Y}^f$ is a functor from the category $\Coalg_n$ of pro-conilpotent $n$-coalgebras to the category $\Sets$. 

The functor $G_{X,Y}^f$ is also representable, and the representating object is 
$$A_\rep(f)=\Bar_{\mathsf{e}_n\{n\}}(\Def(X\xrightarrow{f}Y)[1])$$
where $\Def(X\xrightarrow{f}Y)[1]$ is endowed with a $\mathsf{e}_n\{n\}$-algebra structure.  It has the property similar to \eqref{intro47}, translated to a map \eqref{intro49} of $n$-coalgebras. It implies that $A_\rep(X):=A_\rep(\id_X)$ is a monoid object in the category of $n$-coalgebras.
Note that $X$ may be a $\mathsf{e}_n\{k\}$-algebra for any $k$, as, by an operadic shift, a map of $\mathsf{e}_n\{k\}$-algebras can be made a map of $\mathsf{e}_n\{n\}$ algebras, as the construction requires. 

Moreover, there is an imbedding $i\colon \Coalg\to\Coalg_n$, and the restriction of the functor $G_{X,Y}^f$ along it is equal to $F_{X,Y}^f$. The functor $i$ admits right adjoint, what makes possible to link the representating objects $a_\rep(X)$ and $A_\rep(X)$, along with their monoid structures. 

These constructions provide us with some higher structure on the deformation complex $\Def(X\xrightarrow{\id}X)$, where $X$ is an $\mathsf{e}_n$-algebra. Having unwound this structure, D.Tamarkin derived a homotopy $(n+1)$-algebra structure on $\Def_{\mathsf{e}_n}(X)[-n]$.

To be more precise, to get a homotopy $\mathsf{e}_{n+1}$ algebra structure on $\Def_{\mathsf{e}_n}(X)[-n]$, one uses solely the deformation functor $G_{X,Y}^f$, corresponded to the deformation theory with $\mathsf{e}_n$ coalgebra base, and the monoid structure on the $\mathsf{e}_n\{n\}$ coalgebra $A_\rep(X)$, see Section \ref{section61}. The comparison of the two deformation theories, given by the functors $F_{X,Y}^f$ and $G_{X,Y}^f$, via the right adjoint $R$ to the functor $i\colon \Coalg\to\Coalg_n$, is employed to get an explicit description of the underlying homotopy Lie bracket on $\Def_{\mathsf{e}_n}(X)$ of the $\mathsf{e}_{n+1}$ algebra structure on $\Def_{\mathsf{e}_n}(X)[-n]$. Namely, this bracket is identified with the operadic convolution (strict) Lie bracket. The functor $i$ is (colax-)monoidal, therefore, its right adjoint is lax monoidal. As such, it sends monoids to monoids. The key point is to show that the right adjoint functor $R$ sends $A_\rep(X)$ to $a_\rep(X)$ {\it with their monoid structures}, see Section \ref{section62}.

\comment

\subsection{\sc}\label{section03}
Let us recall briefly the main idea of [T2].

Let $\mathcal{O}$ be a Koszul operad, $f\colon X\to Y$ a map of $\mathcal{O}$-algebras. Then one defines a functor from pro-conilpotent dg cocommutative coalgebras to sets
\begin{equation}\label{intro5}
a\mapsto \{\phi \in \Hom_{\Coalg(\mathcal{O}^\ash)}(a\otimes\Bar_\mathcal{O}(X),\Bar_\mathcal{O}(Y),\ \phi\circ \eta=\Bar(f)\}
\end{equation}
where $\eta\colon \k\to a$ is the coaugmendation map. Denote this functor by $F_{X,Y}^f(-)$.

This functor is always representable. Denote by $a_\rep(X\xrightarrow{f}Y)$ the pro-conilpotent cocommutative coalgebra, representing this functor. explicitly, $a_\rep(X\xrightarrow{f}Y)$ is equal to the Chevalley-Eilenberg chain complex 
$$
a_\rep(X\xrightarrow{f}Y)=C_\CE(\Def(X\xrightarrow{f}Y),\k)
$$
where $\Def(X\xrightarrow{f}Y)$ is the deformation complex, equipped with a dg Lie algebra structure. In fact, it may be viewed as a definition of the deformation complex. 

Let $X\xrightarrow{f}Y\xrightarrow{g}Z$ be two maps of $\mathcal{O}$-algebras. Then one sees immediately that there is a bi-functorial in $a,a^\prime$ map of sets
\begin{equation}\label{intro6}
F_{X,Y}^f(a)\times F_{Y,Z}^g(a^\prime)\to F_{X,Z}^{gf}(a\otimes a^\prime)
\end{equation}
It implies that for the representing coalgebras one has a product
$$
a_\rep(X\xrightarrow{f}Y)\otimes a_\rep(Y\xrightarrow{g}Z)\to a_\rep(X\xrightarrow{gf}Z)
$$
It is associative, for three maps $X\to Y\to Z\to W$.
In particular, take $X=Y=Z$, $f=g=\id$. Denote $a_\rep(X)=a_\rep(X\xrightarrow{\id}X)$. One gets a map of dg pro-conilpotent cocommutative coalgebras
\begin{equation}\label{intro7}
a_\rep(X)\otimes a_\rep(X)\to a_\rep(X)
\end{equation}

It is an associative product. It makes $a_\rep(X)$ a cocommutative dg bialgebra.

It already gives some ``higher structure'' on $\Def(X\xrightarrow{\id}X)$, but by so far not that interesting. To encompass the higher structure hidden in the product \eqref{intro7}, one applies the the Milnor-Moore theorem to the cocommutative bialgebra $a_\rep(X)$. 
The consequence is that the ``initial'' Lie bracket on $\Def(X\xrightarrow{\id}X)$ vanishes, and the shifted complex $\Def(X\xrightarrow{\id}X)[1]$ yields a Lie bracket. This bracket is the Gerstenhaber-like bracket. Note that for $f=\id$, the ``initial'' Lie bracket on $\Def(X\xrightarrow{f}Y)$ may be non-trivial. Thus, the whole procedure may seem as a kind of ``quasi-classical limit''.

The idea in [T2] is that, for special operads $\mathcal{O}$, ``the base'' of the deformation can be choosen as a more structured than a cocommutative coalgebra object.

\endcomment

\subsection{\sc Organisation of the paper}
The paper is organized as follows.

In Section 2 we recall some facts on operads which will be used in the paper. Nothing here is new. 

In Section 3 we recall the bar-cobar duality, for algebras $X$ over a Koszul operad $\mathcal{O}$. We extend the classical statement to the case when $X$ is a symmetric sequence, that is, is a left module over $\mathcal{O}$. The bar and cobar complexes become symmetric sequences with component-wise differentials, equivariant with respect to the action of the symmetric groups. In Section 3.5 we consider the situation when $X$ is a $\mathcal{O}{-}\mathcal{Q}$-bimodule, for an operad $\mathcal{Q}$. Then the bar-complex is at once a coalgebra over the cooperad $\mathcal{O}^\ash$ and a right module over the operad $\mathcal{Q}$. We prove a fragment of the correspong bar-cobar duality statement, for this setting. 

Section 4 contains the deformation theory with cocommutative cobase. We start with the case when the target operad $\mathcal{P}$ of the map $f:\mathsf{e}_n\to\mathcal{P}$, we consider the deformations of, is $[X,X]$, for a symmetric sequence $X$. Then we consider the case $\mathcal{P}=[X,X]_\mathcal{Q}$, for $X$ a $\mathsf{e}_n{-}\mathcal{Q}$-bimodule. As we have shown above, this case covers an arbitrary $\mathcal{P}$.

In Section 5, we consider the deformation theory with $\mathsf{e}_n$-coalgebra as the cobase. We extend the results of Section 4 to this context.

Finally, Section 6 contains a derivation of Theorem \ref{theorem1} from the previous results. This part is very similar to [T2, Section 5], though we tried to clarify some points.

\hspace{2mm}
\subsection*{\sc }

\subsubsection*{\sc Acknowledgements}
The author is thankful to Michael Batanin, Anton Gerasimov, and Dima Tamarkin, for valuable discussions.
The author is thankful to the two anonymous referees for their careful reading of the paper and many remarks and corrections, which have undoubtedly made this published version more readable. 

The work was partially supported by the FWO Research Project Nr. G060118N and 
by the Russian Academic Excellence Project `5-100'.

\section{\sc Preliminaries}
Throughout the paper, $\k$ is a field of characteristic 0. We denote by $\Vect(\k)$ the symmetric monoidal category of ($\mathbb{Z}$-graded) dg vector spaces over $\k$. We use the cohomological grading, so all differentials are of degree +1. For a (dg) vector space $V$ with an action of a group $G$, we denote by $V^G$ the subspace of invariants, and by $V_G$ the quotient-space of coinvariants. 
\subsection{\sc Operads}\label{section01}
\subsubsection{\sc Symmetric sequences}
A (symmetric) operad $\mathcal{P}$ in $\Vect(\k)$ is a collection of $\Sigma_n$-representations $\mathcal{P}(n)$, $n\ge 0$, with the operadic composition operation 
\begin{equation}
\Upsilon\colon\mathcal{P}(k)\otimes\mathcal{P}(n_1)\otimes\dots\otimes \mathcal{P}(n_k)\to\mathcal{P}(n_1+\dots+n_k)
\end{equation}
and an element $\id\in \mathcal{P}(1)$, such that the operadic composition is compatible with the actions of symmetric groups and is associative (in some natural sense), and $\id$ is a two-sided unit for the operadic composition. 

Alternatively, one can define an operad a monoid in the monoidal category of {\it symmetric sequences}, with respect to the composition product.

By definition, a symmetric sequence in a category $\mathscr{C}$ is a collection of objects
$\{\mathcal{P}(n)\in \mathscr{C}\}_{n\ge 0}$ with an action of symmetric group $\Sigma_n$ on $\mathcal{P}(n)$. A morphism $\phi\colon \mathcal{P}\to\mathcal{Q}$ of symmetric sequences is a collection of $\Sigma_n$-equivariant maps $\{\mathcal{P}(n)\to\mathcal{Q}(n)\}_{n\ge 0}$:
\begin{equation}
\Hom_\Sigma(\mathcal{P},\mathcal{Q})=\prod_{n\ge 0}\Hom_\k(\mathcal{P}(n),\mathcal{Q}(n))^{\Sigma_n}
\end{equation}
We denote the category of symmetric sequences in $\mathscr{C}$ with the morphisms $\Hom_\Sigma(-,-)$ by $\mathscr{C}_\Sigma$.

\begin{defn}\label{dgsym}{\rm
A {\it dg symmetric sequence} is a symmetric sequence $\mathcal{P}$ in $\Vect(\k)_\Sigma$ (that is, each $\mathcal{P}(n)$ is a complex of $\k$-vector spaces), such that the differential on $\mathcal{P}(n)$ commutes with the action of $\Sigma_n$ on it, for each $n=0,1,2,\dots$.
}
\end{defn}

It is a monoidal category, with the monoidal product $\mathcal{P}\circ \mathcal{Q}$ defined as
\begin{equation}\label{f1}
(\mathcal{P}\circ \mathcal{Q})(n)=\bigoplus_{k\ge 0}\mathcal{P}(k)\otimes_{\Sigma_k}\mathcal{Q}^{\boxtimes k}(n)
\end{equation}
where 
\begin{equation}
\mathcal{Q}^{\boxtimes k}(n)=\oplus_{n_1+\dots+n_k=n}\mathrm{Ind}_{\Sigma_{n_1}\times\dots\times \Sigma_{n_k}}^{\Sigma_n}(\mathcal{Q}(n_1)\otimes\dots\otimes \mathcal{Q}(n_k))
\end{equation}
This product on the category of symmetric sequences is called {\it the composition product}, it is associative:
\begin{equation}\label{asscirc}
(\mathcal{P}\circ \mathcal{Q})\circ \mathcal{R}=\mathcal{P}\circ (\mathcal{Q}\circ \mathcal{R})
\end{equation}
Its unit is the symmetric sequence $I$, defined as $I(1)=\k$, $I(n)=0$ for $n\ne 1$. 
\begin{remark}{\rm
Note that for a group $G$ and two $G$-modules $V$ and $W$, the tensor product $V\otimes_G W$ means the space of {\it coinvariants}
$(V\otimes_\k W)_G$. That is, in \eqref{f1} one takes the {\it coinvariants} with respect to the group $\Sigma_k$ action.
}
\end{remark}
One defines version(s) of \eqref{f1} for {\it invariants} instead of coinvariants:
\begin{equation}\label{f1bis2}
(\mathcal{P}\circ_1 \mathcal{Q})(n)=\bigoplus_{k\ge 0}\Bigl(\mathcal{P}(k)\otimes\mathcal{Q}^{\boxtimes k}(n)\Bigr)^{\Sigma_k}
\end{equation}
\begin{equation}\label{f1bis}
(\mathcal{P}\hat{\circ}_1 \mathcal{Q})(n)=\prod_{k\ge 0}\Bigl(\mathcal{P}(k)\otimes\mathcal{Q}^{\boxtimes k}(n)\Bigr)^{\Sigma_k}
\end{equation}
These products are monoidal products in $\Vect(\k)_\Sigma$, with the same unit $I$. 

One re-interprets the concept of operad saying that {\it an operad is a monoid in the category of symmetric sequences, with respect to the composition product}. That is, a symmetric sequence $\mathcal{P}$ is an operad, if there are maps
\begin{equation}
\Upsilon\colon \mathcal{P}\circ \mathcal{P}\to\mathcal{P},\ \ i\colon I\to\mathcal{P}
\end{equation}
satisfying the axioms of a monoid in a monoidal category. An operad $\mathcal{P}$ such that $\mathcal{P}(n)=0$ unless $n=1$, $\mathcal{P}(1)=A$ is the same as an associative algebra. 
More generally, consider the functor 
$$
i\colon \Vect(\k)\to\Vect(\k)_\Sigma, \ \ V\mapsto X_V=(0,\underset{n=1}{V},0,0,\dots)
$$
It is a strict monoidal functor 
$$
i\colon (\Vect(\k),\otimes)\to (\Vect(\k)_\Sigma,\circ)
$$

A {\it cooperad} in $\Vect(\k)$ is a comonoid in $\Vect(\k)_\Sigma$ with respect to the monoidal product $-\hat{\circ}_1-$. That is, a cooperad is a symmetric sequence $\mathcal{C}$ with maps
\begin{equation}\label{coop}
\Theta\colon \mathcal{C}\to\mathcal{C}\hat{\circ}_1\mathcal{C},\ \ \varepsilon\colon \mathcal{C}\to I
\end{equation}
satisfying the usual comonoid axioms. 

A cooperad is called {\it finite} if the structure map \eqref{coop} factors as
\begin{equation}\label{coopbis}
\mathcal{C}\to\mathcal{C}{\circ}_1\mathcal{C}\to \mathcal{C}\hat{\circ}_1\mathcal{C}
\end{equation}

The composition product $-\circ-$ admits an inner hom, defined as 
\begin{equation}
[\mathcal{P},\mathcal{Q}](n)=\Hom_{\Sigma}(\mathcal{P}^{\boxtimes n},\mathcal{Q})
\end{equation} 
It is right adjoint to $-\circ-$:
\begin{equation}\label{adjcomp}
\Hom_\Sigma(\mathcal{P}\circ\mathcal{Q},\mathcal{R})=\Hom_\Sigma(\mathcal{P},[\mathcal{Q},\mathcal{R}])
\end{equation}
For any symmetric sequence $X$, $[X,X]$ is a monoid with respect to $\circ$, thus an operad.

\comment
There is a 3-functorial map
\begin{equation}\label{adjcomp1}
\Hom_\Sigma(  \mathcal{P}   ,\mathcal{Q}\circ_1\mathcal{R})\to \Hom_{\Sigma}([\mathcal{R},\mathcal{P}],\mathcal{Q})
\end{equation}
which is not, in general, an isomorphism.
\endcomment

An important particular case is when $X(n)=0$ unless $n=0$, $X(0)=W$. Then  
$[X,X](n)=\Hom_\k(W^{\otimes n},W)$. It is an operad with the operadic composition defined by plugging the arguments. We denote this operad by $\End_\Op(X)$. 

For any group $G$ and a $G$-module $M$, there is a map from invariants to coinvariants:
\begin{equation}\label{can0}
can\colon M^G\to M_G
\end{equation}
In particular, for any two symmetric sequences $X,Y$, one has a canonical map
\begin{equation}\label{can}
can_{X,Y}\colon X\circ_1 Y\to X\circ Y
\end{equation}
If the group $G$ is finite, and the order $\sharp G$ is invertible in $\k$, the map \eqref{can0} is an isomorphism, with the inverse given by the norm:
$$
N\colon M_G\to M^G
$$
\begin{equation}
m\mapsto \frac1{\sharp G}\sum_{g\in G}g\circ m
\end{equation}
In this paper, $\cchar \k=0$, and the map \eqref{can} is always an isomorphism.

We often identify the products $-\circ-$ and $-\circ_1-$, assuming these isomorphisms.

\subsubsection{\sc }
Along with the composition monoidal product on the category $\Vect(\k)_\Sigma$, we will consider the {\it  level-wise monoidal product} $-\otimes_\lev -$, defined as:
\begin{equation}
(X\otimes_\lev Y)(n)=X(n)\otimes Y(n)
\end{equation}
with the diagonal action of the symmetric group $\Sigma_n$. It is called {\it the Hadamard product} and is denoted by $-\otimes_H-$ in [LV].

The unit for this product is the symmetric sequence $\mathsf{Comm}$, defined as
\begin{equation}
\mathsf{Comm}(n)=\k, n\ge 0
\end{equation}
This product admits the internal Hom defined as
\begin{equation}
\Hom_\lev(X,Y)=\{\Hom_\k(X(n),Y(n))\}_{n\ge 0}
\end{equation}
with the symmetric group acting as $(\sigma*f)(x)=\sigma(f(\sigma^{-1}x))$. It is right adjoint to $-\otimes_\lev-$:
\begin{equation}\label{adjlev1}
\Hom_\Sigma(X\otimes_\lev Y,Z)=\Hom_\Sigma(X,\Hom_\lev(Y,Z))
\end{equation}
In fact, one has a stronger adjunction:
\begin{equation}\label{adjlev2}
\Hom_\lev(X\otimes_\lev Y,Z)=\Hom_\lev(X,\Hom_\lev(Y,Z))
\end{equation}

\begin{lemma}
Let $X,Y,X_1,Y_1$ be symmetric sequences in $\Vect(\k)$. One has the following 4-functorial maps:
\begin{itemize}
\item[(i)]
\begin{equation}\label{strange1}
\eta_{X_1X_2Y_1Y_2}\colon (X_1\otimes_\lev X_2)\circ (Y_1\otimes_\lev Y_2)\to (X_1\circ Y_1)\otimes_\lev (X_2\circ Y_2)
\end{equation}
\item[(ii)]
\begin{equation}\label{strange2}
\mu_{X_1X_2Y_1Y_2}\colon (X_1\circ_1 Y_1)\otimes_\lev (X_2\circ_1 Y_2)\to (X_1\otimes_\lev X_2)\circ_1 (Y_1\otimes_\lev Y_2)
\end{equation}
\end{itemize}
\end{lemma}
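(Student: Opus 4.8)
The plan is to construct both maps one arity at a time, expanding the two sides via the definitions of $\circ$ (resp.\ $\circ_1$) and $\boxtimes$, and then to assemble each map from two elementary natural transformations: one comparing the induction of a tensor product with the tensor product of inductions, and one comparing the (co)invariants for the diagonal subgroup with those for the product group. In both items the source and target differ in that the composition product forces a single summation index while the level-wise product on the other side produces two; the whole map is obtained by restricting to the diagonal of these indices.

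For (i), fix $n$ and write the source as
\[
\bigl((X_1\otimes_\lev X_2)\circ(Y_1\otimes_\lev Y_2)\bigr)(n)=\bigoplus_{k\ge 0}\bigl(X_1(k)\otimes X_2(k)\bigr)\otimes_{\Sigma_k}(Y_1\otimes_\lev Y_2)^{\boxtimes k}(n),
\]
whereas $\bigl((X_1\circ Y_1)\otimes_\lev(X_2\circ Y_2)\bigr)(n)$ carries two independent indices $k,l$; I would send the $k$-th summand of the source into the diagonal $(k,k)$-summand of the target. Writing $H=\Sigma_{n_1}\times\cdots\times\Sigma_{n_k}$, the first ingredient is the interchange of tensor factors $\bigotimes_i\bigl(Y_1(n_i)\otimes Y_2(n_i)\bigr)\cong\bigl(\bigotimes_iY_1(n_i)\bigr)\otimes\bigl(\bigotimes_iY_2(n_i)\bigr)$ (with Koszul signs) followed by the diagonal comultiplication on induction
\[
\Delta\colon \Ind_{H}^{\Sigma_n}(A\otimes B)\to \Ind_{H}^{\Sigma_n}(A)\otimes \Ind_{H}^{\Sigma_n}(B),\qquad g\otimes(a\otimes b)\mapsto (g\otimes a)\otimes(g\otimes b),
\]
which identifies the source with the diagonal summand of the target. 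The second ingredient is the canonical projection from diagonal coinvariants onto the coinvariants of the product group,
\[
(V\otimes W)_{\Sigma_k}\twoheadrightarrow (V\otimes W)_{\Sigma_k\times\Sigma_k}=V_{\Sigma_k}\otimes W_{\Sigma_k},
\]
which turns the single $\otimes_{\Sigma_k}$ of the source into the two separate $\otimes_{\Sigma_k}$'s of $X_1\circ Y_1$ and $X_2\circ Y_2$. Both are $\Sigma_n$-equivariant and natural, so they assemble into $\eta$.

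For (ii) the construction is the mirror image, with coinvariants replaced by invariants. Here the source $\bigl((X_1\circ_1 Y_1)\otimes_\lev(X_2\circ_1 Y_2)\bigr)(n)$ has two indices and the target one, so I send the diagonal $(k,k)$-part to the $k$-summand and kill the off-diagonal parts. The inclusion of product invariants into diagonal invariants,
\[
V^{\Sigma_k}\otimes W^{\Sigma_k}=(V\otimes W)^{\Sigma_k\times\Sigma_k}\hookrightarrow (V\otimes W)^{\Sigma_k},
\]
merges the two outer invariants, while the induction-level map now runs the opposite way,
\[
m\colon \Ind_{H}^{\Sigma_n}(A)\otimes \Ind_{H}^{\Sigma_n}(B)\to \Ind_{H}^{\Sigma_n}(A\otimes B).
\]
This $m$ is the canonical projection onto the diagonal ($x=e$) double-coset summand in the Mackey decomposition of $\Res_{\Delta\Sigma_n}\Ind_{H\times H}^{\Sigma_n\times\Sigma_n}(A\boxtimes B)$; after the interchange of factors it yields $Y_1^{\boxtimes k}(n)\otimes Y_2^{\boxtimes k}(n)\to(Y_1\otimes_\lev Y_2)^{\boxtimes k}(n)$, and composing with the invariants inclusion produces $\mu$.

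The main obstacle is the bookkeeping of the symmetric-group actions rather than any single hard idea. One must verify that $\Delta$ genuinely lands in, and $m$ genuinely splits off, the diagonal summand $\Ind_{\Delta H}^{\Delta\Sigma_n}(A\otimes B)\cong\Ind_{H}^{\Sigma_n}(A\otimes B)$ — this is exactly the double-coset analysis for $H\times H\backslash(\Sigma_n\times\Sigma_n)/\Delta\Sigma_n$ — and that the single diagonal $\Sigma_k$ on the source is correctly reconciled with the $\Sigma_k\times\Sigma_k$ that appears once the two composition products are separated. The remaining checks — $\Sigma_n$-equivariance, naturality in all four arguments, compatibility with the component-wise differentials in the dg setting, and the precise Koszul signs of the interchange isomorphism — are routine but must be carried through. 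Finally, since $\cchar\k=0$, the norm isomorphism between invariants and coinvariants identifies $\circ$ with $\circ_1$ and exhibits $\eta$ and $\mu$ as mirror images of one another, a fact I would use as a consistency check on the signs and the direction of each arrow.
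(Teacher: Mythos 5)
Your proposal is correct and follows essentially the same route as the paper's proof: the paper factors both maps through the auxiliary product $(\mathcal{P}\star\mathcal{Q})(n)=\bigoplus_k\mathcal{P}(k)\otimes_\k\mathcal{Q}^{\boxtimes k}(n)$ taken before passing to $\Sigma_k$-(co)invariants, constructs there the embedding $i$ and projection $p$ (your diagonal map $\Delta$ on inductions and your Mackey double-coset projection $m$, which the paper leaves implicit), and then concludes via exactly your two comparison maps, the colax structure $(V\otimes W)_G\to V_G\otimes W_G$ of coinvariants for (i) and the lax structure $V^G\otimes W^G\to(V\otimes W)^G$ of invariants for (ii). Your diagonal-index restriction and the equivariance bookkeeping you flag are precisely the content packed into the paper's assertion that $i$ and $p$ are maps of $\Sigma_k$-modules, so nothing is missing.
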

\begin{proof}
We use the fact that for a group $G$ and two $G$-modules there are maps
\begin{equation}\label{invcoinv}
V^G\otimes W^G\to (V\otimes W)^G\text{  and  }(V\otimes W)_G\to V_G\otimes W_G
\end{equation}
making the invariants functor a lax-monoidal functor, and the coinvariants functor a colax-monoidal functor.

Denote
\begin{equation}
(\mathcal{P}\star \mathcal{Q})(n)=\bigoplus_{k\ge 0}\mathcal{P}(k)\otimes_\k\mathcal{Q}^{\boxtimes k}(n)
\end{equation}
and consider the r.h.s. as a $\Sigma_k$-module.

There are maps (the imbedding $i$ and the projection $p$):
\begin{equation}
i\colon (X_1\otimes_\lev X_2)\star (Y_1\otimes_\lev Y_2)\to (X_1\star Y_1)\otimes_\lev (X_2\star Y_2)
\end{equation}
\begin{equation}
p\colon (X_1\star Y_1)\otimes_\lev (X_2\star Y_2)\to (X_1\otimes_\lev X_2)\star(Y_1\otimes_\lev Y_2)
\end{equation}
Both are maps of $\Sigma_k$-modules. The statement follows from \eqref{invcoinv}.

\end{proof}
\begin{coroll}
The level-wise tensor product of two operads is an operad. The level-wise tensor product of two finite cooperads is a (finite) cooperad.
\end{coroll}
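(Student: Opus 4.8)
The plan is to deduce both assertions directly from the two maps produced in the Lemma, by recognizing them as the structure morphisms of an (op)lax monoidal bifunctor and then invoking the general principle that lax monoidal functors preserve monoids while oplax monoidal functors preserve comonoids. Recall that an operad is exactly a monoid in $(\Vect(\k)_\Sigma,\circ)$, and that a finite cooperad is a comonoid whose structure map factors through $\circ_1$, see \eqref{coopbis}. I would therefore like to view the bifunctor $-\otimes_\lev-$ as lax monoidal for $\circ$ and as oplax monoidal for $\circ_1$, the map $\eta$ of \eqref{strange1} serving as the lax structure and the map $\mu$ of \eqref{strange2} as the oplax one.

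For the first statement, let $\mathcal{P},\mathcal{Q}$ be operads with compositions $\Upsilon_\mathcal{P},\Upsilon_\mathcal{Q}$ and units $i_\mathcal{P},i_\mathcal{Q}$. I would equip $\mathcal{P}\otimes_\lev\mathcal{Q}$ with the composition
\[
(\mathcal{P}\otimes_\lev\mathcal{Q})\circ(\mathcal{P}\otimes_\lev\mathcal{Q})\xrightarrow{\eta_{\mathcal{P}\mathcal{Q}\mathcal{P}\mathcal{Q}}}(\mathcal{P}\circ\mathcal{P})\otimes_\lev(\mathcal{Q}\circ\mathcal{Q})\xrightarrow{\Upsilon_\mathcal{P}\otimes_\lev\Upsilon_\mathcal{Q}}\mathcal{P}\otimes_\lev\mathcal{Q}.
\]
Since $I\otimes_\lev I=I$ --- both sides vanish outside arity $1$, where each equals $\k$ --- the unit is the composite $I=I\otimes_\lev I\xrightarrow{i_\mathcal{P}\otimes_\lev i_\mathcal{Q}}\mathcal{P}\otimes_\lev\mathcal{Q}$. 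Verifying associativity and the unit axiom then amounts to combining the monoid axioms for $\mathcal{P}$ and for $\mathcal{Q}$ with the compatibility of $\eta$ with the associator \eqref{asscirc} of $\circ$ and with the unit $I$.

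For the second statement I would dualize the same argument. Given finite cooperads $\mathcal{C},\mathcal{D}$ with comultiplications factoring as $\Theta_\mathcal{C}\colon\mathcal{C}\to\mathcal{C}\circ_1\mathcal{C}$, $\Theta_\mathcal{D}\colon\mathcal{D}\to\mathcal{D}\circ_1\mathcal{D}$ and with counits $\varepsilon_\mathcal{C},\varepsilon_\mathcal{D}$, I would define the comultiplication on $\mathcal{C}\otimes_\lev\mathcal{D}$ as
\[
\mathcal{C}\otimes_\lev\mathcal{D}\xrightarrow{\Theta_\mathcal{C}\otimes_\lev\Theta_\mathcal{D}}(\mathcal{C}\circ_1\mathcal{C})\otimes_\lev(\mathcal{D}\circ_1\mathcal{D})\xrightarrow{\mu_{\mathcal{C}\mathcal{D}\mathcal{C}\mathcal{D}}}(\mathcal{C}\otimes_\lev\mathcal{D})\circ_1(\mathcal{C}\otimes_\lev\mathcal{D}),
\]
and the counit as $\varepsilon_\mathcal{C}\otimes_\lev\varepsilon_\mathcal{D}\colon\mathcal{C}\otimes_\lev\mathcal{D}\to I\otimes_\lev I=I$. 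By construction this comultiplication factors through $\circ_1$, so the level-wise tensor product is again \emph{finite}; coassociativity and the counit axiom follow, dually to the previous paragraph, from those of $\mathcal{C}$ and $\mathcal{D}$ together with the coherence of $\mu$.

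The one genuinely non-formal point, and the step I expect to cost the most, is the verification of these coherences: that $\eta$ (respectively $\mu$) is associative and unital with respect to the associator and unit of $\circ$ (respectively $\circ_1$). This is a diagram chase, but a forced one, since $\eta$ and $\mu$ are assembled arity by arity out of the lax map $V^G\otimes W^G\to(V\otimes W)^G$ and the colax map $(V\otimes W)_G\to V_G\otimes W_G$ of \eqref{invcoinv}, whose coherence is classical. Moreover, in characteristic $0$ the canonical map \eqref{can} identifies $\circ$ with $\circ_1$, so the coherence needed for the cooperad case is the same as that for the operad case.
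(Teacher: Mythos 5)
Your proposal is correct and matches the paper's intent exactly: the paper proves this corollary with a bare \qed, treating it as immediate from the lemma, precisely because $\eta$ of \eqref{strange1} makes $-\otimes_\lev-$ lax monoidal for $\circ$ (so it preserves monoids, i.e.\ operads) and $\mu$ of \eqref{strange2} makes it oplax monoidal for $\circ_1$ (so it preserves finite comonoids, i.e.\ finite cooperads). Your explicit structure maps, the observation $I\otimes_\lev I=I$, and the reduction of the coherences to the classical (co)lax maps \eqref{invcoinv} simply spell out what the paper leaves implicit.
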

\qed

Another important fact is
\begin{prop}\label{lemmabm}
For any four symmetric sequences $X,Y,X_1,Y_1$ in $\Vect(\k)$, there are 4-functorial maps
\begin{equation}\label{strange3}
\Hom_\lev(X,Y)\circ \Hom_\lev(X_1,Y_1)\to \Hom_\lev(X\circ X_1,Y\circ Y)
\end{equation}
and
\begin{equation}\label{strange33}
\Hom_\lev(X\circ X_1,Y\circ Y)\to \Hom_\lev(X,Y)\circ \Hom_\lev(X_1,Y_1)
\end{equation}
\end{prop}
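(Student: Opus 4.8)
The plan is to manufacture both maps out of the two natural transformations $\eta$ of \eqref{strange1} and $\mu$ of \eqref{strange2} just established, together with the unit and counit of the level-wise Hom adjunction \eqref{adjlev1} and the identification $\circ\cong\circ_1$ supplied by $can$ of \eqref{can}, which is an isomorphism because $\cchar\k=0$. Throughout write $\ev_{A,B}\colon\Hom_\lev(A,B)\otimes_\lev A\to B$ for the (level-wise, hence $\Sigma$-equivariant) evaluation, i.e. the counit of \eqref{adjlev1}, and abbreviate $H=\Hom_\lev(X,Y)$, $H_1=\Hom_\lev(X_1,Y_1)$.

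\emph{Construction of \eqref{strange3}.} By the adjunction \eqref{adjlev1} it is equivalent to produce a $\Sigma$-equivariant map $\bigl(H\circ H_1\bigr)\otimes_\lev(X\circ X_1)\to Y\circ Y_1$. First specialise $\mu$ of \eqref{strange2} to the four arguments $(H,H_1,X,X_1)$ and use $\circ\cong\circ_1$ to rewrite its source as the one above; this gives
\begin{equation*}
\bigl(H\circ_1 H_1\bigr)\otimes_\lev\bigl(X\circ_1 X_1\bigr)\;\longrightarrow\;\bigl(H\otimes_\lev X\bigr)\circ_1\bigl(H_1\otimes_\lev X_1\bigr).
\end{equation*}
Then apply the functor $\circ$ to the two evaluations $\ev_{X,Y}$ and $\ev_{X_1,Y_1}$, landing in $Y\circ_1 Y_1=Y\circ Y_1$. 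Transposing the composite back across \eqref{adjlev1} yields \eqref{strange3}, and $4$-functoriality is automatic since every arrow used is natural in all four variables.

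\emph{Construction of \eqref{strange33}.} The recipe is formally dual: one replaces $\mu$ by $\eta$ and the evaluation by the coevaluation, i.e. one assembles, block by block, the map obtained from $\eta$ of \eqref{strange1} after inserting the coevaluations of \eqref{adjlev1}. Concretely I would mimic the proof of the preceding Lemma at the ``$\star$-level'' $(\mathcal P\star\mathcal Q)(n)=\bigoplus_k\mathcal P(k)\otimes_\k\mathcal Q^{\boxtimes k}(n)$, where the inclusion $i$ and the projection $p$ are available, and then push everything through (co)invariants using \eqref{invcoinv}.

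The main obstacle is exactly this second map. Whereas \eqref{strange3} only uses the always-available natural external product $\Hom_\k(A,B)\otimes\Hom_\k(A',B')\to\Hom_\k(A\otimes A',B\otimes B')$, the map \eqref{strange33} points the opposite way and would require splitting a homomorphism of a tensor product as a tensor of homomorphisms, which is not natural for infinite-dimensional spaces. The way around this is to exploit the direct-sum (coinvariants) nature of $\circ$: after projecting $\Hom_\lev(X\circ X_1,Y\circ Y_1)$ onto its diagonal blocks indexed by $(k;n_1,\dots,n_k)$, each block becomes a genuine finite tensor factor, and $\cchar\k=0$ lets one pass between invariants and coinvariants through the norm isomorphism of \eqref{can0}, so that the resulting family defines an element of $H\circ H_1$ and not merely of its completion $H\,\hat{\circ}_1\,H_1$. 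I expect that checking this diagonal-block assignment to be well defined on coinvariants and natural in all four arguments will be the only point requiring real care.
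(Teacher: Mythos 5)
Your construction of \eqref{strange3} is correct and complete: transposing across the adjunction \eqref{adjlev1}, specialising $\mu$ of \eqref{strange2} to the arguments $(H,H_1,X,X_1)$, identifying $\circ_1$ with $\circ$ via \eqref{can}, and composing with the two $\Sigma$-equivariant evaluations gives exactly the desired map, natural in all four variables. Note that the paper itself offers no proof of this Proposition in the text (it is stated with a qed), so on this half your write-up is more explicit than the source; it is also a cleaner packaging than the block-wise computation with induction/coinduction along $\Sigma_{n_1}\times\dots\times\Sigma_{n_k}\subset\Sigma_n$ and the (co)invariant maps \eqref{invcoinv} that the preceding Lemma suggests. (Incidentally, the target in \eqref{strange3}--\eqref{strange33} should read $Y\circ Y_1$, a typo you silently corrected.)

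The second half, however, has a genuine gap, and it sits precisely where you predicted trouble. Your proposed repair --- project onto diagonal blocks, then invoke $\cchar\k=0$ and the norm isomorphism \eqref{can0} --- addresses neither of the two actual obstructions. First, the block projection of $\Hom_\lev(X\circ X_1,Y\circ Y_1)(n)$ yields an element of the \emph{product} over the infinitely many blocks $(k;n_1,\dots,n_k)$, while $\bigl(\Hom_\lev(X,Y)\circ\Hom_\lev(X_1,Y_1)\bigr)(n)$ is the \emph{direct sum}; the norm identifies invariants with coinvariants for each finite group $\Sigma_k$ but says nothing about $\prod$ versus $\bigoplus$, so you land only in the completion $\hat{\circ}_1$ --- exactly what you wanted to avoid. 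Second, even within a single block you need a map $\Hom(A\otimes B,\,C\otimes D)\to\Hom(A,C)\otimes\Hom(B,D)$ with $A=X(k)$, $B=X_1^{\boxtimes k}(n)$, etc.; your phrase ``a genuine finite tensor factor'' conflates \emph{finitely many} tensor factors with \emph{finite-dimensional} ones, and it is the latter that is needed. Indeed, take all four sequences concentrated in arity $1$, with $Y(1)=Y_1(1)=\k$: then \eqref{strange33} would give a natural map $(A\otimes B)^*\to A^*\otimes B^*$ restricting to the canonical isomorphism on finite-dimensional spaces, which is impossible --- for $A=B=\k^{(\mathbb{N})}$, naturality under the inclusions $\k^n\hookrightarrow\k^{(\mathbb{N})}$ forces the image of $\xi=\sum_i e_i^*\otimes e_i^*$ to have tensor rank at least $n$ for every $n$. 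So \eqref{strange33} is simply not available for arbitrary symmetric sequences: one must assume finite-dimensionality of the relevant components (plus finiteness of the coalgebraic data to land in $\circ_1$ rather than $\hat{\circ}_1$), and these are exactly the standing hypotheses wherever the paper uses the map, e.g.\ in Corollary \ref{corollbm} with a finite cooperad such as $\mathsf{e}_n^*$, whose components are finite-dimensional. Under those hypotheses your diagonal-block strategy does go through; the correct fix is to add the hypotheses, not to try to extract them from the coinvariant structure of $\circ$ and characteristic zero.
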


\comment
For completeness, we provide a proof in Appendix A.
\endcomment

\qed

\begin{coroll}\label{corollbm}
Let $\mathcal{P}$ be an operad and let $\mathcal{C}$ be a finite cooperad in $\Vect(\k)$, $\cchar\ \k=0$. Then 
$$
\Hom_\lev(\mathcal{C},\mathcal{P})
$$
is naturally an operad, and
$$
\Hom_\lev(\mathcal{P},\mathcal{C})
$$
is naturally a finite cooperad.
\end{coroll}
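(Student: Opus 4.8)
The plan is to construct both structures directly from the $4$-functorial maps \eqref{strange3} and \eqref{strange33} of Proposition \ref{lemmabm}, contracting the doubled arguments by means of the structure maps of $\mathcal{P}$ and $\mathcal{C}$.

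For the operad structure, write $\mathcal{B}=\Hom_\lev(\mathcal{C},\mathcal{P})$. Taking $X=X_1=\mathcal{C}$ and $Y=Y_1=\mathcal{P}$ in \eqref{strange3} gives a map
\[
\mathcal{B}\circ\mathcal{B}=\Hom_\lev(\mathcal{C},\mathcal{P})\circ\Hom_\lev(\mathcal{C},\mathcal{P})\longrightarrow\Hom_\lev(\mathcal{C}\circ\mathcal{C},\mathcal{P}\circ\mathcal{P}).
\]
Since $\cchar\k=0$, the canonical map \eqref{can} identifies $-\circ_1-$ with $-\circ-$, and since $\mathcal{C}$ is finite its comultiplication factors as in \eqref{coopbis}; hence $\Theta$ may be read as a map $\mathcal{C}\to\mathcal{C}\circ\mathcal{C}$. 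As $\Hom_\lev$ is contravariant in the first variable and covariant in the second, precomposing with $\Theta$ and postcomposing with $\Upsilon\colon\mathcal{P}\circ\mathcal{P}\to\mathcal{P}$ yields $\Hom_\lev(\mathcal{C}\circ\mathcal{C},\mathcal{P}\circ\mathcal{P})\to\Hom_\lev(\mathcal{C},\mathcal{P})$, and the composite of the two is the operadic composition $\Upsilon_\mathcal{B}$. The unit $I\to\mathcal{B}$ is $\Hom_\lev(\varepsilon,i)$, using $\Hom_\lev(I,I)=I$, where $\varepsilon\colon\mathcal{C}\to I$ is the counit and $i\colon I\to\mathcal{P}$ the unit. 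Dually, for $\mathcal{D}=\Hom_\lev(\mathcal{P},\mathcal{C})$ one precomposes with $\Upsilon$ and postcomposes with $\Theta$ to get $\mathcal{D}\to\Hom_\lev(\mathcal{P}\circ\mathcal{P},\mathcal{C}\circ\mathcal{C})$, and then applies \eqref{strange33} to land in $\mathcal{D}\circ\mathcal{D}$; the counit is $\Hom_\lev(i,\varepsilon)$. Because this cocomposition takes values in $-\circ-\cong-\circ_1-$, the resulting cooperad is finite by definition.

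It remains to check the monoid, resp. comonoid, axioms. Unitality follows formally from the (co)unit axioms of $\mathcal{P}$ and $\mathcal{C}$ together with the normalization of \eqref{strange3} and \eqref{strange33} on the unit $I$. For associativity of $\Upsilon_\mathcal{B}$ one applies \eqref{strange3} twice to obtain a single map $\mathcal{B}\circ\mathcal{B}\circ\mathcal{B}\to\Hom_\lev(\mathcal{C}^{\circ 3},\mathcal{P}^{\circ 3})$; both bracketings of $\Upsilon_\mathcal{B}$ then reduce, by the $4$-functoriality (naturality) of \eqref{strange3}, to precomposition with the twofold iterate of $\Theta$ and postcomposition with the twofold iterate of $\Upsilon$. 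These two composites coincide because $\Theta$ is coassociative and $\Upsilon$ is associative. Coassociativity of the cocomposition on $\mathcal{D}$ is the mirror argument using \eqref{strange33}.

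The main obstacle is the step in which \eqref{strange3} is applied twice: one must verify that the two ways of forming the map $\mathcal{B}\circ\mathcal{B}\circ\mathcal{B}\to\Hom_\lev(\mathcal{C}^{\circ 3},\mathcal{P}^{\circ 3})$ agree after transport along the associator \eqref{asscirc} of the composition product. This is a coherence property of the maps $\eta$ of \eqref{strange1}/\eqref{strange3} with respect to the associativity of $-\circ-$, and it must be traced back to their construction from the lax-monoidal structure map \eqref{invcoinv} of the invariants functor together with the associativity of the induction product $\boxtimes$. Once this coherence is established, the remaining verifications are routine naturality diagrams, and the identification $-\circ-\cong-\circ_1-$ afforded by $\cchar\k=0$ removes any discrepancy between the invariant and coinvariant versions entering the two sides.
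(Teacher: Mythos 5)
Your proposal is correct and coincides with the paper's own route: the paper states the corollary without a written argument, but the convolution operad it later uses in \eqref{homop1} is exactly your composite of \eqref{strange3} with precomposition by $\Theta$ (read as $\mathcal{C}\to\mathcal{C}\circ\mathcal{C}$ via finiteness and $\cchar\k=0$) and postcomposition by $\Upsilon$, with the dual construction via \eqref{strange33} for $\Hom_\lev(\mathcal{P},\mathcal{C})$. The associator-coherence of \eqref{strange3} that you flag is a genuine but routine verification, traceable as you say to the (co)lax-monoidality of (co)invariants in \eqref{invcoinv}, and the paper likewise leaves it implicit.
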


\subsubsection{\sc Left and right modules, bimodules}
A {\it left (resp., right) module} over an operad $\mathcal{P}$ is a symmetric sequence $M$ with a map $\mathcal{P}\circ M\to M$ (resp., $M\circ \mathcal{P}\to M$) satisfying the usual module axioms. 

Let us stress an essential difference between the left and the right modules over an operad $\mathcal{P}$: the right modules over $\mathcal{P}$ always form a $\k$-linear abelian category (in the dg situation, we upgrade it to the corresponding dg category over $\k$), whereas the category of left $\mathcal{P}$-modules is even non-addtive.

{\it An algebra $X$ over an operad $\mathcal{P}$} is a symmetric sequence $X\in\Vect(\k)_\Sigma$ equipped with a map of operads $\mathcal{P}\to [X,X]$. Alternatively, it is the same that a left $\mathcal{P}$-module structure on $X$:
\begin{equation}
m\colon \mathcal{P}\circ X\to X
\end{equation}
We see from the adjunction \eqref{adjcomp} that $X$ is an algebra over an operad $\mathcal{P}$ iff it is a left module over $\mathcal{P}$. 

When $X(n)=0$ unless $n\ne 0$, $X(0)=X_0\in \Vect(\k)$, we call $X_0$ a {\it conventional} algebra over the operad $\mathcal{P}$.

For an operad $\mathcal{P}$ and $A\in\Vect(\k)_\Sigma$ consider 
\begin{equation}
\mathcal{P}\langle A \rangle=\mathcal{P}\circ A
\end{equation}

It follows from the associativity \eqref{asscirc} that $\mathcal{P}\langle A\rangle$ is an algebra over $\mathcal{P}$. The functor $A\rightsquigarrow \mathcal{P}\langle A\rangle$ is left adjoint to the forgetful functor from algebras over $\mathcal{P}$ to $\Vect(\k)_\Sigma$.

Let $X$ be a symmetric sequence with a left action of the operad $\mathcal{P}$.
Then $X\circ \k^{(0)}=X\langle \k\rangle$ is an algebra over $\mathcal{P}$. One has explicitly:
\begin{equation}
X\circ \k^{(0)}=\bigoplus_{n\ge 0}X(n)_{\Sigma_n}
\end{equation}

Let $\mathcal{P}_1,\mathcal{P}_2$ be operads. A $(\mathcal{P}_1,\mathcal{P}_2)$-bimodule is a symmetric sequence $\mathcal{M}$, with a left module structure over $\mathcal{P}_1$ and a right module structure over $\mathcal{P}_2$ which commute.

\subsubsection{\sc Relative composition product and the relative inner Hom}\label{sectioncatalg}
Here we outline some categorical algebra in the category $\Vect(\k)_\Sigma$. We refer the reader to [R], [KM], [St] for more detail. 

Let $\mathcal{P}$ be an operad in $\Vect(\k)$. Then right $\mathcal{P}$-modules form a dg category over $\k$, denoted by $\Mod{-}\mathcal{P}$. We denote by $\Hom_{\Sigma,\Mod{-}\mathcal{P}}(M,N)$ the $\k$-vector space of morphisms of symmetric sequences respecting the right $\mathcal{P}$-module structure. 

For $X\in\Vect(\k)_\Sigma$, $Y\in \Mod{-}\mathcal{P}$, the composition product $X\circ Y$ is a right $\mathcal{P}$-module. 

Define the {\it relative inner hom} functor
$$
[-,-]_\mathcal{P}\colon (\Mod{-}\mathcal{P})^\op\times \Mod{-}\mathcal{P}\to\Vect(\k)_\Sigma
$$
as follows. The symmetric sequence $[M,N]_\mathcal{P}$ is defined as the equalizer
\begin{equation}
[M,N]_\mathcal{P}\to [M,N]\underset{v}{\overset{u}{\rightrightarrows}}[M\circ \mathcal{P},N]
\end{equation}
where, for $f\in [M,N]$, $u(f)=\mu_N(f\circ \mathcal{P})$, and $v(f)=f\mu_M^{\boxtimes n}$, so that
$$
[M,N]_\mathcal{P}(n)=\Hom_{\Sigma,\Mod{-}\mathcal{P}}(M^{\boxtimes n},N)
$$
There is a natural isomorphism 
\begin{equation}
\Hom_{\Sigma,\Mod{-}\mathcal{P}}(X\circ M,N)=\Hom_{\Sigma}(X,[M,N]_\mathcal{P})
\end{equation}
In this way, the category $\Mod{-}\mathcal{P}$ becomes a monoidal category tensored and enriched over the monoidal category $(\Vect(\k)_\Sigma,\circ,I)$, cf. [F1, Ch.1]. 

In particular, for $M\in \Mod{-}\mathcal{P}$, $[M,M]_\mathcal{P}$ is an operad. 
Moreover, for $M,N\in\Mod{-}\mathcal{P}$, $[M,N]_{\mathcal{P}}$ is an $[N,N]_\mathcal{P}{-}[M,M]_\mathcal{P}$-bimodule, and 
\begin{equation}
[\mathcal{P},\mathcal{P}]_\mathcal{P}=\mathcal{P}
\end{equation}
as an $\mathcal{P}{-}\mathcal{P}$-bimodule, and 
\begin{equation}
[\mathcal{P},M]_\mathcal{P}=M
\end{equation}
as right $\mathcal{P}$-module. 

One can define the {\it relative composition product} $M\circ_\mathcal{P}N\in \Vect(\k)_\Sigma$, for $M\in\Mod{-}\mathcal{P}, N\in\mathcal{P}{-}\Mod$, as the corresponding coequalizer. 

Moreover, for $M$ an $A{-}B$-bimodule, $N$ a $B{-}C$-bimodule, one defines $M\circ_BN\in A{-}\Mod{-}C$.
It gives rise to a functor 
\begin{equation}
-\circ_B-\colon A{-}\Mod{-}B\times B{-}\Mod{-}C\to A{-}\Mod{-}C
\end{equation}
One has the adjunction:
\begin{equation}
\Hom_{\Sigma, A{-}\Mod{-}C}(M\circ_BN,L)=\Hom_{\Sigma, A{-}\Mod{-}B}(M,[N,L]_C)
\end{equation}
It implies
\begin{lemma}\label{lemmabimod}
Let $A,B$ be operads, $X\in\Vect(\k)$ an $A{-}B$-bimodule. Then there is a canonical map of operads
\begin{equation}
\phi\colon A\to [X,X]_B
\end{equation}
\end{lemma}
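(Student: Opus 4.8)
The plan is to obtain $\phi$ directly from the adjunction between $-\circ-$ and the relative inner Hom $[-,-]_B$, mimicking the non-relative statement recalled in the introduction (a left $A$-module structure on a symmetric sequence is the same as an operad map to $[X,X]$), but now carried out inside the monoidal category $(\Mod{-}B,\circ,I)$, which is enriched and tensored over $(\Vect(\k)_\Sigma,\circ,I)$.

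First I would record what the bimodule structure supplies. The left $A$-action is a map $\lambda\colon A\circ X\to X$ of symmetric sequences, and the requirement that the left and right actions commute says precisely that $\lambda$ is a morphism of right $B$-modules, where $A\circ X$ carries the right $B$-module structure coming from $X$ (recall that the composition product of a symmetric sequence with a right $B$-module is again a right $B$-module). Thus $\lambda\in\Hom_{\Sigma,\Mod{-}B}(A\circ X,X)$, and applying the adjunction $\Hom_{\Sigma,\Mod{-}B}(A\circ X,X)=\Hom_\Sigma(A,[X,X]_B)$ produces a map of symmetric sequences $\phi\colon A\to[X,X]_B$. This defines the underlying map; it remains to check that it is a morphism of operads.

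Next I would verify compatibility with units and composition. Compatibility with units, namely that $\phi$ sends $\id\in A(1)$ to $\id\in[X,X]_B(1)$, is the adjoint transpose of the module unit axiom asserting that $\id\in A(1)$ acts as the identity on $X$. For multiplicativity, I would establish
\[
\Upsilon_{[X,X]_B}\circ(\phi\circ\phi)\;=\;\phi\circ\Upsilon_A\colon A\circ A\to[X,X]_B,
\]
where $\Upsilon_A$ is the operadic composition of $A$ and $\Upsilon_{[X,X]_B}$ is the monoid product on the relative endomorphism operad. Both sides are adjoint to maps $A\circ A\circ X\to X$, and by naturality of the adjunction together with the description of $\Upsilon_{[X,X]_B}$ as the composite of two evaluations, the left-hand side is adjoint to $\lambda\circ(\id_A\circ\lambda)$ while the right-hand side is adjoint to $\lambda\circ(\Upsilon_A\circ\id_X)$ (using the associativity $A\circ A\circ X=(A\circ A)\circ X=A\circ(A\circ X)$). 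The equality of these two maps $A\circ A\circ X\to X$ is exactly the associativity axiom of the left $A$-module $X$, so the displayed identity holds.

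The step I expect to be the main obstacle is the bookkeeping in that last diagram: one must identify $\Upsilon_{[X,X]_B}$ concretely in terms of the equalizer definition of $[-,-]_B$ and check that, under the adjunction, it unwinds into iterated application of $\lambda$. This is purely formal—it is the relative avatar of the classical fact that a left module over an operad is the same as an algebra over it—and it goes through verbatim once one works in $\Mod{-}B$ rather than in $\Vect(\k)_\Sigma$, relying on the enriched and tensored structure of $\Mod{-}B$ over $(\Vect(\k)_\Sigma,\circ,I)$ from [F1, Ch.1]. Since $-\circ-\dashv[-,-]_B$ is an adjunction of this enriched monoidal structure, no characteristic-zero hypothesis is required for the lemma.
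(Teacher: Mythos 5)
Your proof is correct and is essentially the paper's own argument: both obtain $\phi$ by transposing along the adjunction between $-\circ X$ and $[X,-]_B$, the paper phrasing this as the transpose of the identity map of right $B$-modules under the identification $A\circ_A X=X$, while you transpose the action map $\lambda\colon A\circ X\to X$ directly — the same map, given the coequalizer defining $\circ_A$. Your explicit check of the unit and multiplicativity axioms (reducing them, via naturality of the adjunction, to the unit and associativity axioms of the left $A$-action) just spells out the formal verification that the paper leaves implicit.
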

Indeed, by the adjunction above, maps $A\to [X,X]_B$ are in 1-to-1 correspondence with the maps 
of right $B$-modules $A\circ_AX\to X$. We know that $A\circ_AX=X$, so the map $\phi$ is the map corresponding to the identity map of right $B$-modules.
\qed

\subsubsection{\sc The operadic shift}\label{sectionshift}
Let $X$ be a symmetric sequence in $\Vect(\k)$. Define another symmetric sequence $X\{1\}$, called {\it the operadic shift} of $X$, as
\begin{equation}
X\{1\}(n)=X[-n+1]\otimes_{\Sigma_n}\sgn_n
\end{equation}
where $\sgn_n$ denotes the (1-dimensional) sign representation of $\sigma_n$. 

We list the compatibility properties of the operadic shift with the monoidal structures and Hom's on $\Vect(\k)_\Sigma$:
\begin{lemma}\label{lemmashift}
Let $X,Y\in\Vect(\k)_\Sigma$. The following statements are true:
\begin{itemize}
\item[(i)] $(X\circ Y)\{1\}=X\{1\}\circ Y\{1\}$,
\item[(ii)] $[X,Y]\{1\}=[X\{1\},Y\{1\}]$,
\item[(iii)] $(X\otimes_\lev Y)\{1\}=(X\{1\})\otimes_\lev Y=X\otimes_\lev(Y\{1\})$,
\item[(iv)] $\Hom_\lev(X\{1\},Y\{1\})=\Hom_\lev(X,Y)$,
\end{itemize}
\end{lemma}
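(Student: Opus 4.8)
The plan is to realize the operadic shift as a level-wise (Hadamard) multiplication by one fixed symmetric sequence, and then to deduce three of the four identities formally, isolating the only computation that carries genuine content. Introduce the \emph{suspension sequence} $\mathscr{S}$, with $\mathscr{S}(n)=\k[-n+1]\otimes\sgn_n$ equipped with the diagonal $\Sigma_n$-action; equivalently $\mathscr{S}=\End_\Op(\k s)$ is the endomorphism operad of a one-dimensional space $\k s$ placed in degree $-1$, so that $\mathscr{S}(n)=\Hom_\k\bigl((\k s)^{\otimes n},\k s\bigr)$ (degree $-1$ forces the shift $[-n+1]$, and the odd generator $s$ forces the sign action). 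Reading the defining formula for $\{1\}$ as a diagonal-action tensor product, one has $X\{1\}=X\otimes_\lev\mathscr{S}$ for every symmetric sequence $X$, and likewise $X\{-1\}=X\otimes_\lev\mathscr{S}^{-1}$ with $\mathscr{S}^{-1}(n)=\k[n-1]\otimes\sgn_n$. Since $\sgn_n\otimes\sgn_n$ is trivial and the degrees cancel, $\mathscr{S}\otimes_\lev\mathscr{S}^{-1}=\mathsf{Comm}$, the unit of $\otimes_\lev$; hence $\mathscr{S}$ is $\otimes_\lev$-invertible and $\{1\}=-\otimes_\lev\mathscr{S}$ is an auto-equivalence of $\Vect(\k)_\Sigma$ with inverse $\{-1\}$.

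Granting this, items (iii) and (iv) are immediate. For (iii) I would rewrite $(X\otimes_\lev Y)\{1\}=(X\otimes_\lev Y)\otimes_\lev\mathscr{S}$ and move the factor $\mathscr{S}$ onto either tensor factor using associativity and symmetry of $\otimes_\lev$. For (iv), since each $\mathscr{S}(n)$ is one-dimensional, the canonical arrow $\Hom_\lev(X,Y)\otimes_\lev\Hom_\lev(\mathscr{S},\mathscr{S})\to\Hom_\lev(X\otimes_\lev\mathscr{S},Y\otimes_\lev\mathscr{S})$ is an isomorphism and $\Hom_\lev(\mathscr{S},\mathscr{S})=\mathsf{Comm}$ (the internal endomorphisms of an invertible object form the unit), whence $\Hom_\lev(X\{1\},Y\{1\})=\Hom_\lev(X,Y)$.

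The real content is (i), which does \emph{not} follow formally from invertibility, because $\circ$ and $\otimes_\lev$ interact only through the non-invertible maps $\eta,\mu$ of the preceding Lemma. Here I would argue by direct rearrangement at each arity. Expanding $(X\{1\}\circ Y\{1\})(n)$ via the formula for $\circ$ and substituting $X\{1\}(k)=X(k)\otimes\mathscr{S}(k)$ and $Y\{1\}(n_i)=Y(n_i)\otimes\mathscr{S}(n_i)$, I would collect, in each summand indexed by $k$ and $n_1+\dots+n_k=n$, all suspension factors into the single term $\mathscr{S}(k)\otimes\mathscr{S}(n_1)\otimes\dots\otimes\mathscr{S}(n_k)$. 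The degree shifts add to $(1-k)+\sum_i(1-n_i)=1-n$, matching the shift defining $(X\circ Y)\{1\}(n)$. The operadic composition of $\mathscr{S}=\End_\Op(\k s)$ provides a map
\[
\mathscr{S}(k)\otimes\mathscr{S}(n_1)\otimes\dots\otimes\mathscr{S}(n_k)\longrightarrow\mathscr{S}(n),
\]
an isomorphism of one-dimensional spaces that is equivariant for the wreath inclusion $\Sigma_k\ltimes(\Sigma_{n_1}\times\dots\times\Sigma_{n_k})\hookrightarrow\Sigma_n$. Using it to pull the factor $\mathscr{S}(n)$ out of the sum over $k$ identifies $(X\{1\}\circ Y\{1\})(n)$ with $(X\circ Y)(n)\otimes\mathscr{S}(n)=(X\circ Y)\{1\}(n)$.

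The main obstacle is precisely the equivariance and the Koszul signs in this last identification: I must check that the one-dimensional composition maps of $\mathscr{S}$ are compatible both with the $\Sigma_k$-coinvariants over which $X(k)$ is tensored and with the induction defining $Y^{\boxtimes k}$, i.e. that $\sgn_n$ restricts along the wreath inclusion to $\sgn_k\boxtimes(\sgn_{n_1}\boxtimes\dots\boxtimes\sgn_{n_k})$ once the Koszul signs from permuting the odd generators $s$ are taken into account. This is exactly the statement that $\mathscr{S}$ is an operad, so recording the operad structure of $\End_\Op(\k s)$ forces the bookkeeping. Finally (ii) is formal: from the adjunction $\Hom_\Sigma(Z\circ Y,W)\cong\Hom_\Sigma(Z,[Y,W])$ together with (i) and the fact that $\{1\}$ is an auto-equivalence, one computes, naturally in $P$,
\[
\Hom_\Sigma(P,[X\{1\},Y\{1\}])=\Hom_\Sigma(P\circ X\{1\},Y\{1\})=\Hom_\Sigma(P\{-1\}\circ X,Y)=\Hom_\Sigma(P,[X,Y]\{1\}),
\]
where the middle equality uses $(P\{-1\}\circ X)\{1\}=P\circ X\{1\}$ from (i); by Yoneda, $[X\{1\},Y\{1\}]=[X,Y]\{1\}$.
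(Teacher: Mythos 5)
The paper states this lemma without any proof at all --- it is treated as a standard fact, and the text proceeds directly to the corollary --- so there is no in-paper argument to compare yours against step by step. Your proof is correct, and it is precisely the standard argument the paper implicitly relies on: realizing $-\{1\}$ as the Hadamard product $-\otimes_\lev\mathscr{S}$ with the $\otimes_\lev$-invertible suspension sequence $\mathscr{S}=\End_\Op(\k s)$, $s$ of degree $-1$ (this is the operad denoted $\mathcal{S}=\End_{\k s}$ in [LV, Ch.~7]). Your division of labor is the right one: (iii) and (iv) are formal consequences of $\otimes_\lev$-invertibility of $\mathscr{S}$ (for (iv), the level-wise one-dimensionality of $\mathscr{S}(n)$ makes the canonical lax map an isomorphism, and $\Hom_\lev(\mathscr{S},\mathscr{S})=\mathsf{Comm}$ since conjugation on a one-dimensional space is trivial and the sign twists and shifts cancel); (ii) follows from (i) via the adjunction \eqref{adjcomp} and Yoneda; and all genuine content sits in (i), where the identification of the accumulated shifts and signs with $\sgn_n[-n+1]$ is exactly the operad structure on $\End_\Op(\k s)$: the composition maps are isomorphisms of one-dimensional spaces, equivariant for $\Sigma_k\ltimes(\Sigma_{n_1}\times\dots\times\Sigma_{n_k})\hookrightarrow\Sigma_n$, hence they pass through the coinvariants $\otimes_{\Sigma_k}$ and the induction defining $Y^{\boxtimes k}$, and the degree count $(1-k)+\sum_i(1-n_i)=1-n$ is as you say. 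You correctly identified this equivariance and sign bookkeeping as the only point where a computation could go wrong, and correctly discharged it by packaging it into the operad axioms for $\End_\Op(\k s)$. Two cosmetic remarks. First, the equalities in the lemma are really canonical isomorphisms; in particular the second equality of (iii) uses the symmetry of $\otimes_\lev$, which introduces Koszul signs into the identification --- harmless, but worth noting since the paper writes equalities. Second, the middle step of your Hom chain in (ii) uses not only (i) but also that $\{1\}$ induces a bijection on $\Hom_\Sigma$; this does follow from your setup, e.g.\ from (iv) by passing to invariants, since $\Hom_\Sigma(X,Y)=\prod_n\Hom_\lev(X,Y)(n)^{\Sigma_n}$, but it deserves the half-sentence.
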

\begin{coroll}
Let $\mathcal{P}$ be an operad in $\Vect(\k)$. Then $\mathcal{P}\{1\}$ is again an operad. Similarly, $\mathcal{C}\{1\}$ is a cooperad as soon as $\mathcal{C}$ is a cooperad.
\end{coroll}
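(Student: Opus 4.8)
The plan is to exhibit the operadic shift $(-)\{1\}$ as a strict monoidal endofunctor of $(\Vect(\k)_\Sigma,\circ,I)$ and then to invoke the elementary fact that a strict monoidal functor carries monoids to monoids (and comonoids to comonoids). First I would record that the shift preserves the monoidal unit: since $I(1)=\k$ and $I(n)=0$ for $n\ne 1$, the defining formula $I\{1\}(n)=I(n)[-n+1]\otimes_{\Sigma_n}\sgn_n$ gives $I\{1\}(1)=\k$ and $I\{1\}(n)=0$ otherwise, so $I\{1\}=I$. Combined with Lemma \ref{lemmashift}(i), i.e. the strict equality $(X\circ Y)\{1\}=X\{1\}\circ Y\{1\}$, this shows that $(-)\{1\}$ is strict monoidal for $\circ$.

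Then, given the monoid (= operad) structure $\Upsilon\colon\mathcal{P}\circ\mathcal{P}\to\mathcal{P}$, $i\colon I\to\mathcal{P}$, I would apply the functor and reinterpret the domains and codomains via the strict equalities above:
\begin{equation}
\Upsilon\{1\}\colon\mathcal{P}\{1\}\circ\mathcal{P}\{1\}=(\mathcal{P}\circ\mathcal{P})\{1\}\to\mathcal{P}\{1\},\qquad i\{1\}\colon I=I\{1\}\to\mathcal{P}\{1\}.
\end{equation}
Because the associativity constraint for $\circ$ is itself a strict equality \eqref{asscirc} and the shift's compatibility is strict, the associativity and unit axioms for $\mathcal{P}$ transport verbatim to $\Upsilon\{1\}$ and $i\{1\}$; hence $\mathcal{P}\{1\}$ is again an operad.

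For the cooperad statement the only extra input needed is the analogue of Lemma \ref{lemmashift}(i) for the product $\hat\circ_1$ with which cooperads are defined, namely $(X\hat\circ_1 Y)\{1\}=X\{1\}\hat\circ_1 Y\{1\}$, together with $I\{1\}=I$. I would check this by the same arity-wise bookkeeping as in part (i): the shift acts one arity at a time, it commutes with the formation of the relevant $\Sigma_k$-(co)invariants and with the product over $k$, and in characteristic $0$ the canonical comparison \eqref{can} between invariants and coinvariants is an isomorphism, so the invariant-based $\hat\circ_1$ behaves under the shift exactly as $\circ$ does. Granting this, applying $(-)\{1\}$ to the comonoid structure maps $\Theta\colon\mathcal{C}\to\mathcal{C}\hat\circ_1\mathcal{C}$ and $\varepsilon\colon\mathcal{C}\to I$ and reinterpreting the targets endows $\mathcal{C}\{1\}$ with a comonoid structure, i.e. makes it a cooperad.

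The main obstacle I anticipate is precisely this reconciliation of the two distinct monoidal products: operads are monoids for $\circ$ whereas cooperads are comonoids for $\hat\circ_1$, and the shift formula carries the sign twist $\otimes_{\Sigma_n}\sgn_n$ and the degree shift $[-n+1]$, so one must verify that the degree and sign bookkeeping is insensitive to whether one forms $\bigoplus$ with coinvariants or $\prod$ with invariants. Once that is settled, the whole statement is a formal consequence of monoidal functoriality and requires no further computation.
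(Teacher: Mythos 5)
Your proof is correct and follows essentially the same route as the paper, which states the corollary as an immediate consequence of Lemma \ref{lemmashift}(i) together with $I\{1\}=I$, i.e.\ the strict monoidality of $(-)\{1\}$ for $\circ$. Your extra care in checking the analogue $(X\hat\circ_1 Y)\{1\}=X\{1\}\hat\circ_1 Y\{1\}$ for the cooperad case is a point the paper leaves implicit via its standing identification of $\circ$ and $\circ_1$ in characteristic $0$ (the isomorphism \eqref{can}), and your degree/sign bookkeeping there is sound.
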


We denote
\begin{equation}
X\{n\}=(\dots((X\{1\})\{1\})\dots)\{1\}
\end{equation}
where the operation $-\{1\}$ is applied $n$ times. 

Let $X$ be a vector space considered as the symmetric sequence $X^{(0)}=(\underset{n=0}{X},0,0,\dots)$.
Then 
\begin{equation}
X^{(0)}\{n\}=(X[n])^{(0)}
\end{equation}
and
\begin{equation}
\End_\Op(X)\{1\}=[X^{(0)},X^{(0)}]\{1\}=\End_\Op(X[1])
\end{equation}
where $-[n]$ is the conventional shift of degree of a vector space.

\subsection{\sc Koszul operads}
We refer the reader to [GK] and [LV, Ch. 7] for theory of Koszul operads.

Here we just fix some notations.

For a quadratic operad $\mathcal{O}$ we denote by $\mathcal{O}^!$ the quadratic dual cooperad.
As well, we denote 
\begin{equation}\label{oash}
\mathcal{O}^\ash=(\mathcal{O}\{1\})^!=\mathcal{O}^!\{-1\}
\end{equation}

For any quadratic operad $\mathcal{O}$ there is a map of dg operads
\begin{equation}\label{barop}
\Bar_\Op(\mathcal{O}^\ash)\to \mathcal{O}
\end{equation}
inducing an isomorphism on $H^0$. A quadratic operad $\mathcal{O}$ is called {\it Koszul} if \eqref{barop} is a quasi-isomorphism of dg operads. (See [LV, Section 6.5] for the definition of $\Bar_\Op(\mathcal{O})$).

Many classical operads are Koszul, among them $\mathsf{Assoc}, \mathsf{Comm}, \mathsf{Lie},\mathsf{e}_n$. Their (shifted) Koszul dual are:
\begin{equation}
\begin{aligned}
\ & \mathsf{Assoc}^\ash=\mathsf{Assoc}^*\{-1\}\\
&\mathsf{Comm}^\ash=\mathsf{Lie}^*\{-1\}\\
&\mathsf{Lie}^\ash=\mathsf{Comm}^*\{-1\}\\
&\mathsf{e}_d^\ash=\mathsf{e}_d^*\{-d\}
\end{aligned}
\end{equation}

\subsection{\sc Convolution complexes}
\subsubsection{\sc The convolution complex for (co)operads}\label{section021}
Let $\mathcal{P}$ be an operad, and $\mathcal{C}$ a finite cooperad. Then $\Hom_\lev(\mathcal{C},\mathcal{P})$ is naturally an operad. Recall that its 
components are
\begin{equation}\label{homop}
\Hom_\lev(\mathcal{C},\mathcal{P})(n)=\Hom_\k(\mathcal{C}(n),\mathcal{P}(n))
\end{equation}
and the symmetric group $\Sigma_n$ acts on $\Hom_\Op(\mathcal{C},\mathcal{P})(n)$ as 
$$
(\sigma(f))(x)=\sigma(f(\sigma^{-1}x))
$$
The operad structure is given as
\begin{equation}\label{homop1}
\Hom_\lev(\mathcal{C},\mathcal{P})\circ\Hom_\lev(\mathcal{C},\mathcal{P})\to \Hom_\ev(\mathcal{C}\circ_1\mathcal{C},\mathcal{P}\circ\mathcal{P})\to \Hom_\lev(\mathcal{C},\mathcal{P})
\end{equation}
Here the first arrow is \eqref{strange3}, and the second arrow is given by the (co)operad structure maps $\mathcal{C}\to\mathcal{C}\circ_1\mathcal{C}$ and $\mathcal{P}\circ\mathcal{P}\to\mathcal{P}$.

We denote this operad structure on $\Hom_\lev(\mathcal{C},\mathcal{P})$ by $\Hom_\Op(\mathcal{C},\mathcal{P})$.

There is another construction, introduced in [KM], which associates a dg pre-Lie algebra $\mathcal{P}_\Sigma$ with a dg operad $\mathcal{P}$.

The underlying complex of $\mathcal{P}_\Sigma$ is defined as
\begin{equation}
\mathcal{P}_\Sigma=\bigoplus_{n\ge 1}\mathcal{P}(n)_{\Sigma_n}=\mathcal{P}\circ  \k^{(0)}
\end{equation}
(we assume that $\mathcal{P}$ is a non-unital operad, that is, $\mathcal{P}(0)=0$).

The formula for $\Psi_1\circ\Psi_2\in\mathcal{P}(m+n-1)$, for $\Psi_1\in \mathcal{P}(m)$ and $\Psi_2\in\mathcal{P}(n)$, reads:
\begin{equation}
\Psi_1\star \Psi_2=\sum_{i=1}^m\pm \Psi_1\circ_i \Psi_2
\end{equation}
where 
\begin{equation}\label{reli}
\Psi_1\circ_i \Psi_2=\Psi_1(\id^{\otimes (i-1)}\otimes\Psi_2\otimes\id^{\otimes (m-i)})
\end{equation}
The associated dg Lie algebra is also denoted by $\mathcal{P}_\Sigma$. For $\Psi_1,\Psi_2$ as above,
\begin{equation}
[\Psi_1,\Psi_2]=\Psi_1\star\Psi_2-(-1)^{|\Psi_1||\Psi_2|}\Psi_2\star\Psi_1
\end{equation}
See [MK, Sect. 1.7] for more detail. 

We can apply the above construction to the operad $\Hom_\Op(\mathcal{C},\mathcal{P})$, see \eqref{homop1}. We get the {\it operadic convolution Lie algebra} on the dg vector space 
\begin{equation}
\Conv(\mathcal{C},\mathcal{P})=\Hom_\Op(\mathcal{C},\mathcal{P})_\Sigma=\prod_{k\ge 1}\Hom_\lev(\mathcal{C}(k),\mathcal{P}(k))_{\Sigma_k}
\end{equation}

\comment
\subsubsection{\sc }
Consider the following situation. Let $\mathcal{P}_1,\mathcal{P}_2$ be operads, $\mathcal{M}$ a $(\mathcal{P}_1,\mathcal{P}_2)$-bimodule. 

Denote 
$$
\underline{\M}=\M\langle \k\rangle
$$
It follows from the associativity \eqref{asscirc} that $\underline{\M}$ is an algebra over $\mathcal{P}_1$.

Now the right $\mathcal{P}_2$-module structure on $\mathcal{M}$ makes $\underline{\M}$ a (right) module over the (pre-)Lie algebra $\underline{\mathcal{P}}_2$, as
\begin{equation}
\Theta\circ \Psi=\sum_{i=1}^m\Theta\circ_i \Psi \in \M(m+n-1)
\end{equation}
where $\Theta\in \M(m),\Psi\in \mathcal{P}_2(n)$, cf. [KM], ???.

\begin{lemma}\label{lemmasuper}
In the assumptions as above, the two structures on $\underline{M}$, the one of an algebra over $\mathcal{P}_1$ and the one of a right module over the Lie algebra $\underline{\mathcal{P}}_2$, are compatible, as follows:
\begin{equation}\label{commute}
[\Upsilon(x; m_1,\dots, m_k),\ell]=\sum_{i=1}^k\Upsilon(x; m_1,\dots, [m_i,\ell],\dots,m_k)
\end{equation}
where $x\in\mathcal{P}_1, m_1,\dots, m_k\in \underline{M}, \ell\in \underline{\mathcal{P}}_2$.
\end{lemma}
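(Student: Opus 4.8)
The plan is to establish the identity at the level of the symmetric sequence $\M$, before passing to the coinvariants defining $\underline{\M}$, and then to descend. Recall that the $\mathcal{P}_1$-algebra structure on $\underline{\M}$ comes from the left-module structure map $\mathcal{P}_1\circ\M\to\M$: on components it sends $x\in\mathcal{P}_1(k)$ together with $m_1\in\M(a_1),\dots,m_k\in\M(a_k)$ to the element $\Upsilon(x;m_1,\dots,m_k)\in\M(a_1+\dots+a_k)$ obtained by grafting each $m_i$ into the $i$-th input of $x$. The right $\underline{\mathcal{P}_2}$-action is $[m,\ell]=\sum_{j}m\circ_{j}\ell$, where $m\circ_{j}\ell$ inserts $\ell\in\mathcal{P}_2(n)$ into the $j$-th input of $m$ through the right-module structure map $\M\circ\mathcal{P}_2\to\M$ (with $\id\in\mathcal{P}_2(1)$ in the remaining slots), exactly as in \eqref{reli}. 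Since both structures are restrictions of the single bimodule structure, the content of the asserted identity is precisely the compatibility between grafting from the left and inserting from the right.

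To see this compatibility, write $N=a_1+\dots+a_k$, so that $\Upsilon(x;m_1,\dots,m_k)\in\M(N)$ has its $N$ inputs partitioned into consecutive blocks $B_1,\dots,B_k$, the block $B_i$ of size $a_i$ being contributed by $m_i$. By definition of the $\underline{\mathcal{P}_2}$-action,
\[
[\Upsilon(x;m_1,\dots,m_k),\ell]=\sum_{j=1}^{N}\Upsilon(x;m_1,\dots,m_k)\circ_{j}\ell .
\]
The defining axiom of a bimodule---that the left $\mathcal{P}_1$-action and the right $\mathcal{P}_2$-action commute---together with the associativity \eqref{asscirc} of the composition product, identifies the insertion of $\ell$ at an input $j\in B_i$ with the grafted element in which $m_i$ is replaced by $m_i\circ_{j'}\ell$, where $j'$ is the local position of $j$ inside $B_i$. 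Regrouping the sum over $j$ according to the blocks then gives
\[
\begin{aligned}
\sum_{j=1}^{N}\Upsilon(x;m_1,\dots,m_k)\circ_{j}\ell
&=\sum_{i=1}^{k}\Upsilon\Bigl(x;m_1,\dots,\textstyle\sum_{j'=1}^{a_i}m_i\circ_{j'}\ell,\dots,m_k\Bigr)\\
&=\sum_{i=1}^{k}\Upsilon(x;m_1,\dots,[m_i,\ell],\dots,m_k),
\end{aligned}
\]
which is the right-hand side of the asserted identity.

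The one delicate point is the sign bookkeeping in the graded setting: carrying $\ell$ past the entries $m_{i+1},\dots,m_k$, and past the portion of $m_i$ lying to the left of the chosen input, produces Koszul signs, and one must check that these coincide with the signs already built into the partial compositions $\circ_{j'}$ and into the pre-Lie product on $\underline{\mathcal{P}_2}$. This is the main (though routine) obstacle. Finally, all the maps involved are $\Sigma_n$-equivariant, and since $\cchar\k=0$ the canonical map \eqref{can0} identifies invariants with coinvariants; hence the identity proved on $\M$ descends to $\underline{\M}=\bigoplus_{n}\M(n)_{\Sigma_n}$, which completes the argument.
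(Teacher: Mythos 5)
Your proof is correct and is precisely the unwinding that the paper has in mind: the paper's own proof consists of the single line ``It follows directly from the definitions,'' and your block decomposition of the $N=a_1+\dots+a_k$ inputs combined with the bimodule axiom (commutation of the left $\mathcal{P}_1$- and right $\mathcal{P}_2$-actions, plus the unit of $\mathcal{P}_2$ in the untouched slots) is exactly that verification made explicit. The two caveats you flag are genuinely routine: the Koszul signs are suppressed in the paper anyway (note the $\pm$ in its formula $\Psi_1\star\Psi_2=\sum\pm\Psi_1\circ_i\Psi_2$), and the descent to $\underline{\M}=\bigoplus_n\M(n)_{\Sigma_n}$ needs only $\Sigma$-equivariance of the structure maps, not the characteristic-zero identification of invariants with coinvariants.
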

\begin{proof}
It follows directly from the definitions.

\qed
\end{proof}

\endcomment

\begin{remark}\label{remsept}{\rm
Assume an operad $\mathcal{P}$ in $\Vect(\k)$ acts on a (dg) symmetric sequence $X\in\Vect(\k)_\Sigma$. Then the operad $\mathcal{P}$ acts on the (dg) vector space $X_\Sigma$. Indeed, $X_\Sigma=X\circ \k^{(0)}$, and one has $$\mathcal{P}\circ (X\circ \k^{(0)})=(\mathcal{P}\circ X)\circ \k^{(0)}\xrightarrow{m_X\circ \id}X\circ \k^{(0)}$$
where $m_X\colon \mathcal{P}\circ X\to X$ defines the $\mathcal{P}$-action on $X$. 
}
\end{remark}

\subsubsection{\sc The convolution complex for (co)algebras}\label{section022}
Let $\mathcal{P}$ be an operad, $\mathcal{C}$ a finite cooperad.

A {\it coalgebra} $C$ over $\mathcal{C}$ is given by its structure map $\Delta_C\colon C\to \mathcal{C}\hat{\circ}_1 C$. A coalgebra $C$ is called {\it finite} if the map $\Delta_C$ factors as
$$
C\to\mathcal{C}\circ_1 C\to \mathcal{C}\hat{\circ}_1C
$$

 Let $A\in\Vect(\k)_\Sigma$ be an algebra over $\mathcal{P}$, and $C\in\Vect(\k)_\Sigma$ a finite coalgebra over $\mathcal{C}$.

Consider the symmetric sequence $\Hom_\lev(C,A)$. We claim that it becomes an algebra over the operad $\Hom_\Op(\mathcal{C},\mathcal{P})$.

Indeed, there are maps
\begin{equation}\label{ophomalg}
\Hom_\lev(\mathcal{C},\mathcal{P})\circ\Hom_\lev(C,A)\to
\Hom_\lev(\mathcal{C}\circ_1 C,\mathcal{P}\circ A)\to \Hom_\lev(C,A)
\end{equation}
The first map is given by \eqref{strange3}, and the second map is given by the compositions $C\to \mathcal{C}\circ_1 C$ and $\mathcal{P}\circ A\to A$.

\comment
Indeed, define
for $\Psi_1,\dots,\Psi_k\in\Hom(C,A)$, $\Theta\in \Hom_\Op(\mathcal{C},\mathcal{P})(k)$, 
$$
\Upsilon(\Theta\otimes (\Psi_1\otimes\dots\otimes \Psi_1))\in\Hom(C,A)
$$
as 
\begin{equation}
C\xrightarrow{\Delta}\mathcal{C}\circ C\xrightarrow{\Theta\otimes\Psi_\ldot}\mathcal{P}\circ A\xrightarrow{m}A
\end{equation}
where $\Psi_\ldot=\Psi_1\otimes \dots \otimes \Psi_k$, $\Delta$ is defined via the $\mathcal{C}$-coalgebra on $C$, and $m$ is defined via the $\mathcal{P}$-algebra structure on $A$.
\endcomment

There is a differential on $\Hom_\lev(C,A)$, defined for a homogeneous $\Psi$ as
\begin{equation}\label{eqdif1}
(d\Psi)(x)=d_A(\Psi(x))-(-1)^{|\Psi|}\Psi(d_C(x))
\end{equation}
where $d_A$ and $d_C$ are the differentials in $A$ and $C$, correspondingly.

When $\mathcal{P}$ is a dg operad and $\mathcal{C}$ is a dg cooperad, the operad $\Hom_\Op(\mathcal{C},\mathcal{P})$ is a dg operad, with the differential defined similarly to \eqref{eqdif1}, via the differentials in $\mathcal{C}$ and $\mathcal{P}$. 

We get:
\begin{lemma}\label{lca}
Let $\mathcal{P}$ be an operad, $\mathcal{C}$ a finite cooperad over $\Vect(\k)$. Let $A\in \Vect(\k)_\Sigma$ be an algebra over $\mathcal{P}$, $C\in\Vect(\k)_\Sigma$ a finite coalgebra over $\mathcal{C}$.
Then the construction above makes the symmetric sequence $\Hom_\lev(C,A)$ a dg algebra in $\Vect(\k)_\Sigma$ over the dg operad $\Hom_\Op(\mathcal{C},\mathcal{P})$. 
\end{lemma}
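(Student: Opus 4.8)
The plan is to verify directly the three defining properties of an algebra over a dg operad for the action map \eqref{ophomalg}: compatibility with the operadic unit, associativity of the action, and compatibility with the differentials. The guiding observation is that the operad structure \eqref{homop1} on $\Hom_\Op(\mathcal{C},\mathcal{P})$ and the candidate action \eqref{ophomalg} are assembled by exactly the same recipe, namely the interchange map \eqref{strange3} of Proposition \ref{lemmabm} followed by the relevant (co)structure maps; consequently each algebra axiom will reduce to a coherence property of \eqref{strange3} together with (co)associativity and (co)unitality that are already in hand. Since $\cchar\k=0$, I freely identify $\mathcal{C}\circ_1 C$ with $\mathcal{C}\circ C$ and $\mathcal{P}\circ_1 A$ with $\mathcal{P}\circ A$ through the canonical isomorphism \eqref{can}, so that \eqref{strange3} applies verbatim to the pairs $(\mathcal{C},C)$ and $(\mathcal{P},A)$.

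First I would dispose of the unit. The operadic unit of $\Hom_\Op(\mathcal{C},\mathcal{P})$ is the element of $\Hom_\k(\mathcal{C}(1),\mathcal{P}(1))$ sending $c\mapsto\varepsilon(c)\cdot\id_\mathcal{P}$, built from the cooperad counit $\varepsilon\colon\mathcal{C}\to I$ and the operad unit $i\colon I\to\mathcal{P}$. Feeding this element and an arbitrary $\Psi\in\Hom_\lev(C,A)$ into \eqref{ophomalg}, the counit axiom for the coalgebra $C$ (that $C\to\mathcal{C}\circ_1 C\to I\circ_1 C=C$ is the identity) together with the unit axiom for the algebra $A$ (that $A=I\circ A\to\mathcal{P}\circ A\to A$ is the identity) collapses the composite back to $\Psi$, giving the unit axiom.

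The central step is associativity. Here I would write the two composites
$$(\Hom_\Op(\mathcal{C},\mathcal{P})\circ\Hom_\Op(\mathcal{C},\mathcal{P}))\circ\Hom_\lev(C,A)\rightrightarrows\Hom_\lev(C,A),$$
one obtained by first applying the operad multiplication \eqref{homop1} and then the action, the other by applying the action twice, after reassociating via \eqref{asscirc}. Expanding both through \eqref{strange3} yields a large diagram that I expect to factor into two independent pieces: an \emph{interchange} square asserting that the maps \eqref{strange3} are compatible with iterated composition products (the coherence and naturality of \eqref{strange3}, resting ultimately on naturality of $\eta$ from \eqref{strange1}), and a \emph{(co)structure} square asserting that coassociativity of $\Delta_\mathcal{C}$ and of $\Delta_C$ is matched, under \eqref{strange3}, with associativity of $\Upsilon$ on $\mathcal{P}$ and of $m_A$ on $A$. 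I expect this coherence diagram to be the main obstacle: one must check that the single comultiplication $\Delta_C\colon C\to\mathcal{C}\circ_1 C$, iterated against $\Delta_\mathcal{C}$ on the cooperad side, lines up through \eqref{strange3} with the iterated operad multiplication on the $\mathcal{P}$-side, and the bookkeeping of the $\Sigma_n$-(co)invariants and of the signs implicit in \eqref{strange1}--\eqref{strange3} is where care is most needed.

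Finally, for the dg statement I would check that \eqref{ophomalg} is a morphism of complexes. The differential on $\Hom_\lev(C,A)$ is the convolution differential \eqref{eqdif1}, and the differential on $\Hom_\Op(\mathcal{C},\mathcal{P})$ is its analogue built from $d_\mathcal{C}$ and $d_\mathcal{P}$. Both interchange maps \eqref{strange3} are chain maps, being assembled from $\otimes$ and $\circ$ of complexes, while $\Delta_C$, $\Delta_\mathcal{C}$, $m_A$ and $\Upsilon$ are chain maps by the standing dg hypotheses; hence the Leibniz rule for \eqref{ophomalg} follows formally from that of each constituent, and \eqref{ophomalg} defines an action in the dg category, completing the proof.
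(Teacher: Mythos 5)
Your proposal is correct and follows essentially the same route as the paper: the paper's ``proof'' consists precisely of the construction of the action map \eqref{ophomalg} (the interchange map \eqref{strange3} followed by $\Delta_C$ and $m_A$) together with the convolution differential \eqref{eqdif1}, with the verification of the unit, associativity, and Leibniz axioms left implicit as routine. Your spelled-out reduction of these axioms to (co)unitality, (co)associativity, and the naturality and chain-map property of \eqref{strange3} is exactly the omitted check, carried out correctly.
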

\qed

\begin{coroll}\label{corlca}
In the notations as in Lemma \ref{lca}, the dg operad $\Hom_{\Op}(\mathcal{C},\mathcal{P})$ acts on the dg vector space $\Hom_\Sigma(C,A)$
\end{coroll}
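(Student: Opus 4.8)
The plan is to read the Corollary off from Lemma~\ref{lca} by applying the general mechanism recorded in Remark~\ref{remsept}. By Lemma~\ref{lca}, the dg symmetric sequence $X := \Hom_\lev(C,A)$ is an algebra over the dg operad $\Hom_\Op(\mathcal{C},\mathcal{P})$ in $\Vect(\k)_\Sigma$; by the adjunction (\ref{adjcomp}) this is the same datum as a left-module structure $m_X\colon \Hom_\Op(\mathcal{C},\mathcal{P})\circ X\to X$. So I would first simply invoke Remark~\ref{remsept} with this $X$: composing on the right with $\k^{(0)}$ and using associativity (\ref{asscirc}) of $\circ$ gives the chain
$$
\Hom_\Op(\mathcal{C},\mathcal{P})\circ\bigl(X\circ\k^{(0)}\bigr)=\bigl(\Hom_\Op(\mathcal{C},\mathcal{P})\circ X\bigr)\circ\k^{(0)}\xrightarrow{\,m_X\circ\id\,}X\circ\k^{(0)},
$$
which is precisely a conventional-algebra structure of $\Hom_\Op(\mathcal{C},\mathcal{P})$ on the single dg vector space $X_\Sigma=X\circ\k^{(0)}$.

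The second step is to identify $X_\Sigma$ with $\Hom_\Sigma(C,A)$. Since $\k^{(0)}$ is concentrated in arity $0$, a short unwinding of the composition product shows that $X\circ\k^{(0)}$ is again concentrated in arity $0$ with value $\bigoplus_{n\ge 0}X(n)_{\Sigma_n}=\bigoplus_{n\ge 0}\Hom_\k(C(n),A(n))_{\Sigma_n}$; as $\cchar\k=0$, the canonical map (\ref{can0}) from invariants to coinvariants is an isomorphism in each arity, so this matches $\Hom_\k(C(n),A(n))^{\Sigma_n}$, the $n$-th factor of $\Hom_\Sigma(C,A)$. Finally I would check that the differential (\ref{eqdif1}) on $X$ is $\Sigma_n$-equivariant in each arity and that the structure maps of Lemma~\ref{lca} commute with it; both are immediate, so the whole package lives in dg vector spaces and dg operads, as required.

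The only genuinely delicate point — and the step I expect to be the main obstacle — is the discrepancy between the direct sum appearing in $X_\Sigma=\bigoplus_n(-)_{\Sigma_n}$ and the product appearing in $\Hom_\Sigma(C,A)=\prod_n(-)^{\Sigma_n}$. This is a matter of convention rather than mathematics: the operadic action is assembled arity by arity and is equally compatible with either completion, exactly as in the definition of the operadic convolution Lie algebra $\Conv(\mathcal{C},\mathcal{P})=\Hom_\Op(\mathcal{C},\mathcal{P})_\Sigma$, where the product convention is already in force. I would therefore phrase the final identification using whichever of sum or product is fixed once and for all in Section~\ref{sectioncatalg}, and note that nothing in the construction depends on the choice.
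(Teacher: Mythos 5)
Your proof is correct and follows exactly the route the paper takes: its entire proof of Corollary \ref{corlca} is the one-line ``It follows from Lemma \ref{lca} and Remark \ref{remsept}'', i.e.\ precisely your composition $\Hom_\Op(\mathcal{C},\mathcal{P})\circ(X\circ\k^{(0)})=(\Hom_\Op(\mathcal{C},\mathcal{P})\circ X)\circ\k^{(0)}\xrightarrow{m_X\circ\id}X\circ\k^{(0)}$ applied to $X=\Hom_\lev(C,A)$. Your extra care in identifying $X_\Sigma$ with $\Hom_\Sigma(C,A)$ --- the invariants/coinvariants isomorphism in characteristic $0$ and the sum-versus-product convention (which the paper itself silently switches to a product in defining $\Conv(\mathcal{C},\mathcal{P})$) --- only makes explicit what the paper leaves implicit.
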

\begin{proof}
It follows from Lemma \ref{lca} and Remark \ref{remsept}.
\end{proof}

\subsubsection{\sc }\label{section0233}
Let $\mathcal{O}$ be a Koszul operad. Let $C$ be a coalgebra over $\mathcal{O}^\ash$, and $A$ an algebra over $\mathcal{O}$. Consider the symmetric sequence $\Hom_\lev(C,A)$. By \eqref{ophomalg}, it is an algebra over the operad $\Hom_\Op(\mathcal{O}^\ash,\mathcal{O})$. The following property is very important for the deformation theory:
\begin{lemma}\label{liekoszul}
For any Koszul operad $\mathcal{O}$ with finite-dimensional $\mathcal{O}(2)$, there is a map of operads
\begin{equation}
\phi\colon \mathsf{Lie}\{1\}\to \Hom_\Op(\mathcal{O}^\ash,\mathcal{O})
\end{equation}
\end{lemma}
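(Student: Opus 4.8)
The plan is to exhibit the required morphism through the canonical Koszul twisting morphism, using the quadratic presentation of $\mathsf{Lie}\{1\}$. First recall that $\mathsf{Lie}\{1\}$ is a binary quadratic operad with a single generator: since $\mathsf{Lie}(2)=\sgn_2$ sits in degree $0$, the shift conventions of Section \ref{sectionshift} give $\mathsf{Lie}\{1\}(2)=\mathsf{Lie}(2)[-1]\otimes\sgn_2$, which is the trivial $\Sigma_2$-representation placed in cohomological degree $1$ (the two sign twists cancel), while the single relation is the shifted Jacobi identity, a $\Sigma_3$-submodule of the arity-$3$ part of the free operad on this generator. Consequently, to give a morphism of operads $\phi\colon\mathsf{Lie}\{1\}\to\Hom_\Op(\mathcal{O}^\ash,\mathcal{O})$ is the same as to choose the image $b\in\Hom_\Op(\mathcal{O}^\ash,\mathcal{O})(2)=\Hom_\k(\mathcal{O}^\ash(2),\mathcal{O}(2))$ of the generator — a $\Sigma_2$-invariant element of cohomological degree $1$ — subject to the single requirement that its Jacobiator vanish in $\Hom_\Op(\mathcal{O}^\ash,\mathcal{O})(3)$.

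Next I would produce $b$ as the binary (weight-one) component of the canonical twisting morphism $\kappa\colon\mathcal{O}^\ash\to\mathcal{O}$. Here the finite-dimensionality of $\mathcal{O}(2)$ enters: it guarantees that the arity-$2$ part of the quadratic dual cooperad is the honest linear dual, so that, via $\mathcal{O}^\ash=\mathcal{O}^!\{-1\}$ together with Lemma \ref{lemmashift}, $\mathcal{O}^\ash(2)$ is canonically a shift of $\mathcal{O}(2)$ equipped with a (de)suspension isomorphism onto $\mathcal{O}(2)$. I take $b$ to be this isomorphism. Bookkeeping of the shift $\{-1\}$ and of the $\sgn_2$-twists shows the sign representations cancel, $b$ lands in degree $1$, and $b$ is $\Sigma_2$-invariant, matching the symmetry type of the generator of $\mathsf{Lie}\{1\}(2)$ found above. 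Since both $\mathcal{O}$ and $\mathcal{O}^\ash$ carry zero internal differential, the convolution differential on $\Conv(\mathcal{O}^\ash,\mathcal{O})$ vanishes, so the Maurer–Cartan equation for $\kappa$ reduces to $[\kappa,\kappa]=0$.

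The heart of the proof is the vanishing of the Jacobiator of $b$. Because $\kappa$ is concentrated in weight one, the operadic composite $b\circ_1 b$ together with its $\Sigma_3$-translates — equivalently the arity-$3$ component of $[\kappa,\kappa]$ — is the Jacobiator, a map $\mathcal{O}^\ash(3)\to\mathcal{O}(3)$. By definition of the quadratic dual cooperad, the relevant cocomposition $\mathcal{O}^\ash(3)\to\bigoplus\mathcal{O}^\ash(2)\otimes\mathcal{O}^\ash(2)$ realizes $\mathcal{O}^\ash(3)$ as the corelations, i.e.\ the annihilator $R^\perp$ of the quadratic relations $R\subseteq\Free(\mathcal{O}(2))(3)$ of $\mathcal{O}$; applying $b\otimes b$ and composing in $\mathcal{O}$ is, under the duality identification afforded by finite-dimensionality of $\mathcal{O}(2)$, precisely pairing against $R$. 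Hence the total map factors through the canonical pairing $R^\perp\otimes R\to\k$, which is zero, so the Jacobiator vanishes and $\phi$ is a well-defined operad map. I note that only quadraticity and finiteness of $\mathcal{O}(2)$ are used, Koszulness being inessential for this particular statement. The main obstacle is exactly this arity-$3$ identification: matching $b\circ_1 b$ and its cyclic/symmetric $\Sigma_3$-translates with the duality pairing $R^\perp\otimes R\to\k$, while tracking the Koszul signs produced by the shift $\{-1\}$ and the symmetric-group twists, and confirming that the resulting combination is exactly the Jacobi relator of $\mathsf{Lie}\{1\}$ rather than merely its image in the coinvariants $\Conv(\mathcal{O}^\ash,\mathcal{O})$.
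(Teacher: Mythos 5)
Your proof is correct, but it takes a genuinely different route from the paper: the paper does not prove Lemma \ref{liekoszul} internally at all --- it invokes the Ginzburg--Kapranov construction of $\phi$ via the operadic analogues of Manin's black and white products ([GK, 2.2], [LV, 8.8]) and merely records the explicit arity-two formula \eqref{lieexplicit}, $\phi(\omega)=\sum_i t_i[1]^*\otimes t_i$, observing that quadraticity of $\mathsf{Lie}$ makes the arity-two component determine the map. Your element $b$, the canonical (de)suspension isomorphism $\mathcal{O}^\ash(2)=E[1]\to E$, is exactly this element, so the two constructions produce the same morphism; what you add is a self-contained verification, via the weight-one component of the canonical twisting morphism $\kappa\colon\mathcal{O}^\ash\to\mathcal{O}$, that the Jacobiator vanishes. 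Two remarks on your execution. First, a slip of conventions: under the paper's definitions ($\mathcal{O}^\ash=\mathcal{O}^!\{-1\}$ with $\mathcal{O}^!$ the dual \emph{cooperad}) the corelations, i.e.\ $\mathcal{O}^\ash(3)$, are --- up to suspension and sign twist --- the relations $R$ themselves, not $R^\perp$; the space $R^\perp$ is the space of relations of the quadratic dual \emph{operad}. The cleanest form of your vanishing argument is therefore that the composite
\begin{equation*}
\mathcal{O}^\ash(3)\hookrightarrow \bigl(\text{weight-two part of the cofree cooperad}\bigr)\xrightarrow{\,b\otimes b\,}\Free(E)(3)\twoheadrightarrow \mathcal{O}(3)=\Free(E)(3)/(R)
\end{equation*}
is the reduction of $R$ modulo $R$, hence zero; your formulation through the pairing $R^\perp\otimes R\to\k$ is the same statement read through the duality afforded by $\dim_\k\mathcal{O}(2)<\infty$, so this is a misstatement of bookkeeping rather than a gap. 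Second, you rightly flag that the Maurer--Cartan equation in the convolution Lie algebra $\Conv(\mathcal{O}^\ash,\mathcal{O})$ lives in \emph{coinvariants} and is a priori weaker than the operad-map condition; your componentwise factorization through $R$ does yield the strict $\Sigma_3$-equivariant vanishing of the Jacobiator in $\Hom_\Op(\mathcal{O}^\ash,\mathcal{O})(3)$, which is what the lemma requires. Your observation that only quadraticity and finiteness of $\mathcal{O}(2)$ are used, Koszulness being inessential, is also correct and consistent with loc.\ cit. In sum: the paper's citation is shorter and situates $\phi$ within the Manin-product formalism, while your argument is more elementary and self-contained, and it makes explicit the sign and duality bookkeeping on which the paper's later use of the explicit formula (e.g.\ in Lemma \ref{2lie}) implicitly relies.
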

The map $\phi$ was constructed in [GK] via the operadic analogues of Manin's black and white products of quadratic algebras, see [GK, 2.2], [LV, 8.8]. 

Later we use the following explicit formula for the arity component $\phi(2)$.

Let $E=\mathcal{O}(2)$, $E[1]=\mathcal{O}^\ash(2)$. Take any basis $t_1,\dots, t_m$ in $E$. Denote by $t_1[1],\dots,t_m[1]
$ the corresponding basis in $E[1]$. Then the map $\phi$ sends the canonical generator $\omega=[-,-]$ in $\mathsf{Lie}\{1\}(2)$ to
\begin{equation}\label{lieexplicit}
\phi(\omega)=\sum_{i=1}^mt_i[1]^*\otimes t_i\in\Hom_{\Sigma_2}(\mathcal{O}^\ash(2),\mathcal{O}(2))
\end{equation}

As the operad $\mathsf{Lie}$ is quadratic, the component $\phi(2)$ defines the map $\phi$.

\begin{coroll}
In the assumptions as above, the symmetric sequence $\Hom_\lev(C,A)\{-1\}$ has a natural structure of a Lie algebra. 
\end{coroll}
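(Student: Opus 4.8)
The plan is to produce the Lie structure by restriction of structure along $\phi$, followed by a single application of the operadic (de)shift. Write $X=\Hom_\lev(C,A)$. By Lemma \ref{lca}, $X$ is an algebra over the dg operad $\Hom_\Op(\mathcal{O}^\ash,\mathcal{O})$, that is, it carries a map of dg operads $\Hom_\Op(\mathcal{O}^\ash,\mathcal{O})\to[X,X]$. Composing with the map $\phi\colon \mathsf{Lie}\{1\}\to\Hom_\Op(\mathcal{O}^\ash,\mathcal{O})$ of Lemma \ref{liekoszul} (available since $\mathcal{O}(2)$ is finite-dimensional), I obtain a map of dg operads
\[
\psi\colon \mathsf{Lie}\{1\}\to[X,X],
\]
i.e.\ a $\mathsf{Lie}\{1\}$-algebra structure on the symmetric sequence $X$.

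Next I would de-shift this structure. The operadic shift $-\{1\}$ is an invertible endofunctor of $\Vect(\k)_\Sigma$ which is monoidal for the composition product by Lemma \ref{lemmashift}(i); hence its inverse $-\{-1\}$ also sends operads to operads and maps of operads to maps of operads. Applying $-\{-1\}$ to $\psi$ gives a map of operads $(\mathsf{Lie}\{1\})\{-1\}\to[X,X]\{-1\}$. Since the two shifts compose to the identity, $(\mathsf{Lie}\{1\})\{-1\}=\mathsf{Lie}$; and by Lemma \ref{lemmashift}(ii), applied to the inverse shift, $[X,X]\{-1\}=[X\{-1\},X\{-1\}]$. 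Therefore I get a map of operads
\[
\mathsf{Lie}\to[X\{-1\},X\{-1\}],
\]
which is precisely a Lie algebra structure on $X\{-1\}=\Hom_\lev(C,A)\{-1\}$, as claimed. Naturality is automatic, as every arrow used is functorial in $C$ and $A$.

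The argument is essentially formal, so I do not expect a deep obstacle; the care required is in the bookkeeping. First, one must check that the differential is respected throughout, so that $\psi$ is a map of \emph{dg} operads and $-\{-1\}$ is a dg functor, yielding a dg Lie algebra and not merely a graded one; this is guaranteed by the dg refinements already present in Lemmas \ref{lca} and \ref{liekoszul}. Second, the identity $[X,X]\{-1\}=[X\{-1\},X\{-1\}]$ for the negative shift, which I would deduce from Lemma \ref{lemmashift}(ii) by applying $-\{1\}$ to both sides and invoking invertibility. Finally, it is worth recording the explicit bracket: pushing the generator $\omega\in\mathsf{Lie}\{1\}(2)$ through $\psi$ together with formula \eqref{lieexplicit} identifies the bracket on $X\{-1\}$ with the convolution bracket assembled from the comultiplication of $C$ and the multiplication of $A$, which is the description needed for the deformation-theoretic applications later in the paper.
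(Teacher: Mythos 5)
Your proposal is correct and is essentially the paper's own (largely implicit) argument: the paper derives the corollary by combining Lemma \ref{lca} (the $\Hom_\Op(\mathcal{O}^\ash,\mathcal{O})$-algebra structure on $\Hom_\lev(C,A)$) with the map $\phi$ of Lemma \ref{liekoszul} and then de-shifting, exactly as you do. Your extra bookkeeping --- that $-\{-1\}$ is monoidal for $\circ$, that $[X,X]\{-1\}=[X\{-1\},X\{-1\}]$ via Lemma \ref{lemmashift}(ii), and the identification of the bracket through \eqref{lieexplicit} --- is a sound and welcome expansion of what the paper leaves to the reader.
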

\qed

The symmetric sequence $\Hom_\lev(C,A)\{-1\}$ with its Lie algebra structure is often called {\it the convolution symmetric sequence}.

Consider an element $f\in\prod_{n\ge 1}\Hom_\lev(C,A)\{-1\}(n)^1$ satisfying the Maurer-Cartan equation:
\begin{equation}
d_0f+\frac12[f,f]=0
\end{equation}
In this situation, one can twist the differential in $\Hom_\lev(C,A)\{-1\}$ by $\ad(f)$, defined as $\ad(f)(-):=[f,-]$, where $[-,-]$ is the convolution Lie bracket, and get a dg Lie algebra
\begin{equation}
(\Hom_\Sigma (C,A)\{-1\}, d_0+\ad(f))
\end{equation}

\section{\sc The  bar-cobar adjunction and its relative version}\label{sectionbarcobar}

\subsection{\sc Pro-conilpotent $\mathcal{O}$-coalgebras}\label{subsectioncoalg}
Let $\mathcal{O}$ be an operad, $V\in\Vect(\k)_\Sigma$ a symmetric sequence. Denote by $\Alg(\mathcal{O})$ the category of non-unital $\mathcal{O}$-algebras in $\Vect(\k)_\Sigma$. 

The forgetful functor $\Alg(\mathcal{O})\to \Vect(\k)_\Sigma$, from $\mathcal{O}$-algebras in $\Vect(\k)_\Sigma$ to symmetric sequences,
admits a left adjoint. It is given by the free algebra over $\mathcal{O}$, generated by $V$:
\begin{equation}
\mathscr{F}_\mathcal{O}(V)=\mathcal{O}\circ V
\end{equation}
\begin{equation}
\Hom_{\Alg(\mathcal{O})}(\mathscr{F}_\mathcal{O}(V), A)=\Hom_{\Vect(\k)_\Sigma}(V,A)
\end{equation}
It is a non-unital $\mathscr{O}$-algebra.

There is a version of it for the unital augmented  $\mathcal{O}$-algebras. Denote by $\Alg^{u, aug}({\mathcal{O}})$ the category of such algebras over $\k$. For $A\in\Alg^{u,aug}({\mathcal{O}})$, define the functor 
\begin{equation}
R(A)=A_+=\Ker(\varepsilon\colon A\to \k^{(0)})
\end{equation}
where $\varepsilon \colon A\to \k^{(0)}$ is the augmentation, which is assumed to be a map of $\mathcal{O}$-algebras.  The functor $R$ admits a left adjoint:
\begin{equation}
\mathscr{F}_\mathcal{O}^u(V)=(\mathcal{O}\circ V)\oplus \k^{(0)}
\end{equation}
One has:
\begin{equation}
\Hom_{\Alg^{u, aug}(\mathcal{O})}(\mathscr{F}^u_\mathcal{O}(V), A)=\Hom_{\Vect(\k)_\Sigma}(V,A_+)
\end{equation}

Such a (right) adjoint functor to the forgetful functor does not exist, in general, for the coalgebras over an operad. One should restrict ourselves to a class of coalgebras, called {\it pro-conilpotent}. (In [T2], they are called pro-coartinian. In [Q, Appendix B] they are called connected.)
In this paper, we consider pro-conilpotent $\mathcal{C}$-coalgebras in {\it symmetric sequences}, where $\mathcal{C}$ is a {\it finite }cooperad, see \eqref{coopbis}.

For simplicity, assume that the finite cooperad $\mathcal{C}$ is {\it quadratic} (it is the only case we deal in this paper with). Assume that $\mathcal{C}$ is an {\it biaugmented} cooperad, where by a {\it biaugmentation} we mean operad maps $I\xrightarrow{\eta}\mathcal{C}\xrightarrow{\varepsilon}I$.  

Let $\mathcal{C}$ be a quadratic biaugmented cooperad. We denote 
$$
\mathcal{C}(n)_+=\Ker(\varepsilon\colon \mathcal{C}(n)\to I(n))
$$

By a {\it $\mathcal{C}$-coalgebra} we understand an object $C\in\Vect(\k)_\Sigma$ with a structure map  $\Delta\colon C\to \mathcal{C}\hat{\circ}_1 C$ and with a counit $\varepsilon_1 \colon C\to \k^{(0)}$, satisfying the natural axioms. By a coaugmentation of the coalgebra $C$ we mean a map $\eta_1\colon \k^{(0)}\to C$ of $\mathcal{C}$-coalgebras, such that $\varepsilon_1\circ\eta_1=\id$.

Set $F^0C=\mathrm{Im}(\eta_1)\simeq \k^{(0)}$.
The symmetric sequence $C/F^0C$ gets a structure of a non-counital $\mathcal{C}$-coalgebra.
Define an ascending filtration on $C/F^0C$, as follows. 

Set 
$$
F^i(C/F^0C)=\big\{x\in C/F^0C| \Delta(x)=(\underset{1}{0},\dots,\underset{i}{0},t_{i+1},t_{i+2},
\dots  \big\}
$$
where $t_\ell\in (\mathcal{C}(\ell)\otimes C^{\otimes \ell})^{\Sigma_\ell}$.

One has 
$$
0\subset F^1(C/F^0C)\subset F^2(C/F^0C)\subset F^3(C/F^0C)\subset\dots
$$

A coaugmented $\mathcal{C}$-coalgebra $C$ is called {\it pro-conilponent} if the ascending filtration $\{F^i(C/F^0C)\}_{i\ge 1}$ 
is {\it exhausted}.

Denote the category of pro-conilpotent counital coaugmented coalgebras over a cooperad $\mathcal{C}$ by $\Coalg_{\pronilp}^{cu,caug}(\mathcal{C})$.

Consider the forgetful functor $\mathscr{V}\colon \Coalg_{\pronilp}^{cu,caug}(\mathcal{C})\to\Vect(\k)_\Sigma$, defined as
\begin{equation}
\mathscr{V}(C)=C/F^0C
\end{equation}

One has:
\begin{lemma}\label{lemmacogen}
Let $\mathcal{C}$ be a finite quadratic biaugmented cooperad. Then the functor $\mathscr{V}\colon \Coalg_{\pronilp}^{cu,caug}(\mathcal{C})\to\Vect(\k)_\Sigma$ admits a right adjoint $\mathscr{F}$, given by the direct sum cofree coalgebra
\begin{equation}
\mathscr{F}^{\mathcal{C}}(V)=\big(\bigoplus_{n\ge 1}(\mathcal{C}(n)\otimes V^{\boxtimes n})^{\Sigma_n}\big)\oplus \k^{(0)}
\end{equation}
\end{lemma}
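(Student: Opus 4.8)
The plan is to treat this as the operadic-coalgebra dual of the free-algebra adjunction recalled at the beginning of the subsection: I would first equip $\mathscr{F}^{\mathcal{C}}(V)$ with the structure of a pro-conilpotent coaugmented counital $\mathcal{C}$-coalgebra, and then verify the universal property directly. Write $\overline{\mathscr{F}}^{\mathcal{C}}(V)=\bigoplus_{n\ge 1}(\mathcal{C}(n)\otimes V^{\boxtimes n})^{\Sigma_n}$ for the reduced part, so that $\mathscr{F}^{\mathcal{C}}(V)=\overline{\mathscr{F}}^{\mathcal{C}}(V)\oplus\k^{(0)}$, the coaugmentation $\eta_1$ being the inclusion of the summand $\k^{(0)}$, the counit $\varepsilon_1$ the projection onto it, and $\mathscr{V}(\mathscr{F}^{\mathcal{C}}(V))=\overline{\mathscr{F}}^{\mathcal{C}}(V)$.

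For the coalgebra structure I would use the comultiplication $\Theta\colon\mathcal{C}\to\mathcal{C}\hat{\circ}_1\mathcal{C}$ of the cooperad; since $\mathcal{C}$ is finite it factors through $\mathcal{C}\circ_1\mathcal{C}$, and evaluating on $V$ produces the structure map $\Delta\colon\mathscr{F}^{\mathcal{C}}(V)\to\mathcal{C}\hat{\circ}_1\mathscr{F}^{\mathcal{C}}(V)$. Coassociativity and counitality of $\Delta$ are inherited from those of $\Theta$ and $\varepsilon$; in characteristic $0$ the passage between invariants and coinvariants needed to identify the summands is harmless by the norm isomorphism \eqref{can}. The decisive point is pro-conilpotence: the arity filtration $F^{i}\overline{\mathscr{F}}^{\mathcal{C}}(V)=\bigoplus_{1\le n\le i}(\mathcal{C}(n)\otimes V^{\boxtimes n})^{\Sigma_n}$ coincides with the filtration $F^{\bullet}(C/F^{0}C)$ of the subsection and is exhausted \emph{precisely because one takes the direct sum rather than the direct product} $\mathcal{C}\hat{\circ}_1 V$; this is what places $\mathscr{F}^{\mathcal{C}}(V)$ in $\Coalg_{\pronilp}^{cu,caug}(\mathcal{C})$.

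It remains to produce the adjunction bijection $\Hom_{\Coalg_{\pronilp}^{cu,caug}(\mathcal{C})}(C,\mathscr{F}^{\mathcal{C}}(V))\cong\Hom_{\Vect(\k)_\Sigma}(\mathscr{V}(C),V)$ natural in $C$ and $V$. The counit of the adjunction is the cogenerator projection $\pi\colon\mathscr{V}\mathscr{F}^{\mathcal{C}}(V)\to V$ onto the arity-$1$ summand $\mathcal{C}(1)\otimes V$ followed by $\varepsilon\otimes\id_V$. Conversely, given $\psi\colon\mathscr{V}(C)=C/F^{0}C\to V$, I would define its lift $\widehat{\psi}\colon C\to\mathscr{F}^{\mathcal{C}}(V)$ by $\eta_1$ on $F^{0}C$ and, on the reduced part, by prescribing its arity-$n$ component to be $(\id_{\mathcal{C}(n)}\otimes\psi^{\boxtimes n})\circ\Delta_n$, where $\Delta_n\colon C/F^{0}C\to(\mathcal{C}(n)\otimes (C/F^{0}C)^{\boxtimes n})^{\Sigma_n}$ is the arity-$n$ component of the reduced coproduct. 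Exhaustion of $F^{\bullet}$ guarantees that for each $x$ only finitely many $\Delta_n(x)$ are nonzero, so $\widehat{\psi}(x)$ indeed lands in the direct sum $\mathscr{F}^{\mathcal{C}}(V)$.

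The main obstacle, and the only genuinely non-formal part, is to verify that $\widehat{\psi}$ is a morphism of $\mathcal{C}$-coalgebras, i.e. that $\Delta_{\mathscr{F}}\circ\widehat{\psi}=(\mathcal{C}\hat{\circ}_1\widehat{\psi})\circ\Delta_{C}$; this follows from coassociativity of the coproduct on $C$ matched against the definition of $\Delta$ on $\mathscr{F}^{\mathcal{C}}(V)$ via $\Theta$, comparing the two sides arity-block by arity-block. Once this is done, $\pi\circ\mathscr{V}(\widehat{\psi})=\psi$ is immediate from the counit axiom (the arity-$1$ component of $\Delta$ is the identity up to the $\mathcal{C}(1)$-action), and in the other direction any coalgebra morphism $g\colon C\to\mathscr{F}^{\mathcal{C}}(V)$ is forced to equal $\widehat{\pi\circ\mathscr{V}(g)}$ by an induction on the exhaustive filtration $F^{\bullet}$, using that $g$ must commute with the coproducts: this is exactly where pro-conilpotence of $C$ is indispensable, both for the existence of the lift and for its uniqueness. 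Naturality in $C$ and $V$ is then routine.
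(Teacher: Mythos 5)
Your proof is correct and takes essentially the same route as the paper's: the paper gives no argument of its own, deferring to [T2, Prop.~2.3], and Tamarkin's proof there is precisely your construction --- equip $\mathscr{F}^{\mathcal{C}}(V)$ with the coproduct induced by the cooperad comultiplication $\Theta$ (using finiteness of $\mathcal{C}$ and the characteristic-zero invariants/coinvariants identification), lift $\psi$ to $\widehat{\psi}$ with arity-$n$ component $(\id_{\mathcal{C}(n)}\otimes\psi^{\boxtimes n})\circ\Delta_n$, invoke pro-conilpotence (cf.\ Lemma \ref{lemmafincoalg}) so that the lift lands in the direct sum rather than the product, and force uniqueness from compatibility with the coproducts --- transplanted component-wise to symmetric sequences, exactly as the paper intends. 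One small remark: your reading of the conilpotence filtration, namely that $x\in F^i$ when the components of $\Delta(x)$ vanish in arities \emph{greater than} $i$, is the intended one (it is the only reading making the paper's chain $F^1\subset F^2\subset\cdots$ ascending and matching the arity filtration on the cofree object), the zeros in the paper's displayed formula being evidently misplaced.
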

See e.g. [T2, Prop. 2.3] for a proof.

\qed

We refer to
$
\mathscr{F}^\mathcal{C}(V)$
as the {\it cofree $\mathcal{C}$-coalgebra cogenerated by $V$}.

One also has:
\begin{lemma}\label{lemmafincoalg}
Let $\mathcal{C}$ be a finite quadratic biaugmented cooperad, $C$ a pro-conilpotent coalgebra over $\mathcal{C}$.
Then the structure map 
$$
\Delta\colon C/F^0C\to \mathcal{C}\hat{\circ}_1 (C/F^0C)
$$
factors as
$$
C/F^0C\to \mathcal{C}\circ_1 (C/F^0C)\to \mathcal{C}\hat{\circ}_1(C/F^0C)
$$
where the second map is the canonical imbedding.
\end{lemma}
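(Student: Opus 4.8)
The plan is to verify the factorization one total arity $n$ at a time and one element at a time: it suffices to show that for every homogeneous $x\in(C/F^0C)(n)$ the coproduct $\Delta(x)=(t_1,t_2,\dots)$, with $t_\ell\in(\mathcal{C}(\ell)\otimes(C/F^0C)^{\boxtimes\ell})^{\Sigma_\ell}$, has only finitely many nonzero components $t_\ell$; this is exactly the assertion that $\Delta(x)$ lies in the direct sum $\mathcal{C}\circ_1(C/F^0C)$ and not merely in the product $\mathcal{C}\hat{\circ}_1(C/F^0C)$. First I would isolate the sole source of potential infiniteness. Since $\mathcal{C}$ is quadratic and biaugmented we have $\mathcal{C}(0)=0$, so the only way a component $t_\ell$ with $\ell$ large can be nonzero in total arity $n$ is if some of the $\ell$ coalgebra factors have arity $0$: writing $(C/F^0C)^{\boxtimes\ell}(n)=\bigoplus_{n_1+\dots+n_\ell=n}\mathrm{Ind}(\cdots)$, if every $n_j\ge1$ then $\ell\le n$ and there is nothing to prove. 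Thus the entire content is to bound the number of arity-$0$ tensor factors $(C/F^0C)(0)=C(0)/\k$ that can occur.

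The bound comes from pro-conilpotency. By definition the ascending filtration $\{F^j(C/F^0C)\}_{j\ge1}$ is exhausted, so the given $x$ lies in $F^j$ for some $j$. I would then argue by induction on $j$ using coassociativity in the form $(\Theta\hat{\circ}_1\id)\circ\Delta=(\id\hat{\circ}_1\Delta)\circ\Delta$: expanding the left-hand side uses the cooperad comultiplication $\Theta\colon\mathcal{C}\to\mathcal{C}\circ_1\mathcal{C}$, which by finiteness of $\mathcal{C}$ (automatic here since $\mathcal{C}(0)=0$ forces the inner factors to have arity $\ge1$, hence at most $\ell$ of them) is a finite sum, while on the right-hand side the coalgebra factors appearing in $\Delta(x)$ drop in filtration degree. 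Feeding the inductive hypothesis — a support bound for elements of $F^{j-1}$ — into this identity yields a bound on the cooperad-arities $\ell$ for which $t_\ell\ne0$, in particular on the admissible number of arity-$0$ factors. Hence $\Delta(x)$ has finite support and $\Delta$ factors through $\mathcal{C}\circ_1(C/F^0C)$; as all maps are $\Sigma_n$-equivariant and $\cchar\k=0$ the invariants functor is exact, so the factorization is a genuine map of symmetric sequences.

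The main obstacle is precisely the bookkeeping in the inductive step: one must show that passing $\Delta$ through a coalgebra tensor factor strictly decreases the relevant filtration degree, so that coassociativity closes the induction and converts ``membership in $F^j$'' into an explicit bound on the number of factors. A cleaner, more conceptual alternative that sidesteps this combinatorics is to use the cofree comparison: by the adjunction $\mathscr{V}\dashv\mathscr{F}$ of Lemma \ref{lemmacogen}, pro-conilpotency makes the unit $C\hookrightarrow\mathscr{F}^{\mathcal{C}}(\mathscr{V}(C))$ injective, and the cofree coalgebra is manifestly finite because its coproduct is induced componentwise by the finite map $\Theta$; the induced injection $\mathcal{C}\hat{\circ}_1(C/F^0C)\to\mathcal{C}\hat{\circ}_1(\mathscr{V}\mathscr{F}\mathscr{V}(C))$ (again using exactness of invariants in characteristic $0$) then forces $\Delta(x)$ to inherit the finite support of its image. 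Either way the decisive input is that exhaustiveness of the filtration bounds the depth of the coproduct.
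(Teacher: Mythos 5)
Note first that the paper itself offers no proof of this lemma --- it simply defers to [T2, Lemma 2.2] --- so your argument has to stand on its own. Your preliminary reductions are fine: $\mathcal{C}(0)=0$ for a quadratic cooperad, hence $\mathcal{C}$ is automatically finite, and in a fixed total arity $n$ only components $t_\ell$ containing arity-zero tensor factors can be nonzero for $\ell>n$. The genuine gap is the inductive step, and it is not mere bookkeeping: from ``the tensor factors of $\Delta(x)$ lie in $F^{j-1}$ and have finitely supported coproducts'' one cannot conclude any bound on the cooperadic arities $\ell$ with $t_\ell\neq0$, because the coassociativity identity you invoke can hold vacuously. Concretely, your scheme uses quadraticity only through $\mathcal{C}(0)=0$, and in that generality the statement is false. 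Take the finite biaugmented but non-quadratic cooperad $\mathcal{C}$ with $\mathcal{C}(\ell)=\k\,e_\ell$ for $\ell\ge1$, trivial $\Sigma_\ell$-actions, and trivial reduced cocomposition, i.e.\ $\Theta(e_\ell)=e_1\otimes e_\ell+e_\ell\otimes e_1^{\otimes\ell}$; and the arity-zero coalgebra $C=\k 1\oplus\k x\oplus \k p$ with $\Delta(p)=e_1\otimes p$ and
$$
\Delta(x)=e_1\otimes x+\sum_{\ell\ge2}e_\ell\otimes p^{\otimes\ell}.
$$
This is coassociative (both iterates produce the same terms, since $p$ is primitive and the reduced part of $\Theta$ vanishes), the coradical filtration is exhausted at the second step, and every tensor factor of $\Delta(x)$ is primitive --- the maximal possible ``drop in filtration degree'' --- yet $\Delta(x)$ has infinite support. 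So no refinement of the bookkeeping closes your induction as formulated; the missing idea is to use quadraticity essentially: $\mathcal{C}$ embeds into the cofree cooperad cogenerated by $\mathcal{C}(2)$, so the $(\ell-1)$-fold iterated binary decomposition is injective on $\mathcal{C}(\ell)$, whence $t_\ell\neq0$ is detected by an $(\ell-1)$-fold iterated reduced coproduct of $x$, which must vanish for large $\ell$ once $x\in F^j$ of the coradical filtration. Relatedly, you never pin down which filtration you induct on: as printed, the paper's $F^i$ consists of elements whose \emph{first} $i$ components vanish, which is a decreasing chain (the displayed inclusions show this is a typo), and the degree-drop property you assert is specific to the coradical filtration and itself requires proof.

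Your ``cleaner alternative'' has a different defect: circularity. The nontrivial half of the adjunction in Lemma \ref{lemmacogen} --- extending a map $\mathscr{V}(C)\to V$ to a coalgebra map into the \emph{direct-sum} cofree coalgebra --- requires exactly the finite-support property being proved here (applied to iterated coproducts); this is why in [T2] the factorization statement (Lemma 2.2) precedes and feeds the cofree-coalgebra statement (Prop.\ 2.3), the source of Lemma \ref{lemmacogen}. If you nonetheless treat Lemma \ref{lemmacogen} as a black box, your sketch does complete, with two corrections: the unit $u\colon C\to\mathscr{F}^{\mathcal{C}}(\mathscr{V}(C))$ is injective \emph{automatically} (its cogenerator component is the canonical projection $C\to C/F^0C$ and it respects counits, so $\Ker u\subset F^0C$ is killed by the counit); pro-conilpotency is what makes the unit \emph{exist}, not what makes it injective. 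Then indeed the cofree coproduct is finitely supported because $\Theta$ is finite and cogeneration degrees are bounded on each element, and finite support transfers back along the arity-graded injection $\id\,\hat{\circ}_1\,u$ as you say. But as a freestanding proof of this lemma the argument inverts the logical order of the material it relies on.
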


See [T2, Lemma 2.2] for a proof.

\qed

\subsection{\sc The bar dg symmetric sequence}\label{subsectionbar}
Let $X\in \Vect(\k)_\Sigma$ be a symmetric sequence, $\mathcal{O}$ a biaugmented Koszul operad with finite-dimensional components $\mathcal{O}(n)$. Denote by $\mathcal{O}^\ash$ the shifted Koszul dual cooperad, see \eqref{oash}.

Let $X\in\Vect(\k)_\Sigma$ be an $\mathcal{O}$-algebra. 
Then the cofree $\mathcal{O}^\ash$-coalgebra $\mathscr{F}^{\mathcal{O}^\ash}(V)$ is endowed with a component-wise differential, as follows.

Recall the twisting morphism $\kappa\colon \mathcal{O}^\ash\to \mathcal{O}$ of degree 1, see Lemma \ref{liekoszul}. One has the following composition
\begin{equation}
\mathcal{O}^\ash\circ_1 X\xrightarrow{{can}_{\mathcal{O}^\ash,X}}\mathcal{O}^\ash\circ  X\xrightarrow{\kappa\circ \id}\mathcal{O}\circ X\xrightarrow{{m_X}} X
\end{equation}
see \eqref{can} for the map $can$.

It is a map of symmetric sequences of degree +1. It can be extended to a coderivation of $\mathcal{O}^\ash\circ X$ as of $\mathcal{O}^\ash$-coalgebra, see 
[LV, 11.2.2]. All maps used in this extension are maps of symmetric sequences.

One denotes $d_\Bar$ the corresponding map. One has
$$
d_\Bar^2=0
$$

We denote by $\Bar_\mathcal{O}(X)$ the cofree coalgebra $\mathscr{F}_{\mathcal{O}^\ash}(X)$ over $\mathcal{O}^\ash$ endowed with this differential. 
The differential agrees with the cooperations by the Leibniz rule, and acts component-wise. As well, the differential commutes with the action of symmetric group(s).

It is a pro-conilpotent coalgebra over $\mathcal{O}^\ash$ in $\Vect(\k)_\Sigma$.

When $X$ is an arity 0 symmetric sequence, $\Bar_\mathcal{O}(X)$ is an arity 0 symmetric sequence as well, and agrees with the conventional definition.

\subsection{\sc The cobar dg symmetric sequence}
Let $Y$ be a pro-conilpotent $\mathcal{O}^\ash$-coalgebra in $\Vect(\k)_\Sigma$. One defines its cobar dg symmetric sequence as
\begin{equation}
\Cobar_{\mathcal{O}^\ash}(Y)=(\mathscr{F}_{\mathcal{O}}(Y/{F^0Y}),\ d_\Cobar)=(\mathcal{O}\circ (Y/F^0Y), d_\Cobar)
\end{equation}
where the cobar-differential $d_\Cobar$ is defined as follows. 

The symmetric sequence $\mathcal{O}\circ (Y/F^0Y)$ is an $\mathcal{O}$-algebra.
The differential is defined to satisfy the Leibniz rule, so it is defined by its restriction to the generators. Consider a map of symmetric sequences of degree +1
\begin{equation}
d_\Cobar\colon Y/F^0Y\to \mathcal{O}\circ (Y/F^0Y)
\end{equation}
defined as
\begin{equation}\label{ugugu}
Y/F^0Y\to \mathcal{O}^\ash\circ_1 (Y/F^0Y)\xrightarrow{{can}_{\mathcal{O}^\ash,Y/F^0Y}}\mathcal{O}^\ash\circ (Y/F^0Y)\xrightarrow{\kappa \circ\id}\mathcal{O}\circ (Y/F^0Y)
\end{equation}
Here the first map is given by the $\mathcal{O}^\ash$-coalgebra structure on $Y/F^0Y$ (inherited by the $\mathcal{O}^\ash$-coalgebra structure on $Y$).

One has
$$
d_\Cobar^2=0
$$

As for the case of the bar-differential, the extension of \eqref{ugugu} by Leiniz rule as an $\mathcal{O}$-algebra derivation, uses only maps of symmetric sequences. Therefore, the differential $d_\Cobar$ acts component-wise.

\subsection{\sc The bar-cobar adjunction}\label{barcobaradj}
Let $\mathcal{O}$ be a Koszul operad in $\Vect(\k)$, with finite-dimensional $\mathcal{O}(2)$, such that the cooperad $\mathcal{O}^\ash$ is bi-augmented, $X$ an $\mathcal{O}$-algebra, $Y$ a pro-conilpotent $\mathcal{O}^\ash$-coalgebra. Consider the dg vector space $\Hom_\Sigma(Y/F^0Y,X)$. The symmetric sequence $Y/F^0Y$ is a coalgebra over $\mathcal{O}^\ash$. Then it follows from Lemma \ref{lca}
that $\Hom_\Sigma(Y/F^0Y,X)$ is an algebra over the operad $\Hom_\lev(\mathcal{O}^\ash,\mathcal{O})$. By Lemma \ref{liekoszul}, there is a map of operads $\mathsf{Lie}\{1\}\to\Hom_\lev(\mathcal{O}^\ash,\mathcal{O})$. Therefore, by Remark \ref{remsept} (or by Corollary \ref{corlca}), the dg vector space $\Hom_\Sigma(Y/F^0Y,X)[-1]$ gets a (dg) Lie algebra structure.

\begin{prop}\label{propmainconj}
Let $\mathcal{O}$ be a Koszul operad with finite-dimensional components $\mathcal{O}(n)$, $X$ an $\mathcal{O}$-algebra, $Y$ a pro-conilpotent $\mathcal{O}^\ash$-coalgebra. Assume that the cooperad $\mathcal{O}^\ash$ is finite and biaugmented. One has the following functorial isomorphisms of sets:
\begin{equation}\label{eqsept1}
\Hom_{\Sigma,\Alg(\mathcal{O})}(\Cobar_{\mathcal{O}^\ash}(Y),X)=\MC(\Hom_\Sigma(Y/F^0Y,X)[-1])=\Hom_{\Sigma, \Coalg(\mathcal{O}^\ash)}
(Y,\Bar_\mathcal{O}(X))
\end{equation}
where $\Coalg(\mathcal{O}^\ash)$ stands for the category of pro-conilpotent $\mathcal{O}^\ash$-coalgebras.
\end{prop}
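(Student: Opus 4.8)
The plan is to deduce both isomorphisms from the (co)free (co)algebra adjunctions recalled above, reducing the assertion to the statement that on each side compatibility with the differential is precisely the Maurer--Cartan equation. At the level of the underlying graded objects (forgetting all differentials), a morphism of $\mathcal{O}$-algebras out of the free algebra $\Cobar_{\mathcal{O}^\ash}(Y)=\mathcal{O}\circ(Y/F^0Y)$ is, by the free--forgetful adjunction, the same as a map of symmetric sequences $f\colon Y/F^0Y\to X$; dually, by Lemma \ref{lemmacogen}, a morphism of $\mathcal{O}^\ash$-coalgebras into the cofree coalgebra $\Bar_\mathcal{O}(X)=\mathscr{F}^{\mathcal{O}^\ash}(X)$ is the same as its cogenerator component, again a map $f\colon Y/F^0Y\to X$. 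Since a degree-$1$ element of the shifted Lie algebra $\Hom_\Sigma(Y/F^0Y,X)[-1]$ is exactly a degree-$0$ map $Y/F^0Y\to X$, all three sets in \eqref{eqsept1} are, before imposing any differential condition, canonically identified with $\Hom_\Sigma(Y/F^0Y,X)^0$. It therefore remains to check that the three differential-compatibility conditions cut out the same subset, namely the Maurer--Cartan locus.

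For the left-hand isomorphism I would argue as follows. The graded $\mathcal{O}$-algebra map $F(f)\colon\Cobar_{\mathcal{O}^\ash}(Y)\to X$ extending $f$ commutes with the differentials iff the equality $d_X\circ F(f)=F(f)\circ d_\Cobar$ holds on the generators $Y/F^0Y$. By \eqref{ugugu}, $d_\Cobar$ restricted to generators is the sum of the internal differential of $Y/F^0Y$ and the binary term obtained from the reduced coproduct $Y/F^0Y\to\mathcal{O}^\ash\circ_1(Y/F^0Y)$ followed by $\kappa\circ\id$ and the inclusion into $\mathcal{O}\circ(Y/F^0Y)$; here finiteness of $\mathcal{O}^\ash$ and pro-conilpotence of $Y$ guarantee, via Lemma \ref{lemmafincoalg}, that this lands in the uncompleted $\circ_1$, so no convergence issue arises. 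Transporting the condition through $F(f)$, the internal part produces the convolution differential $d_0f$ of \eqref{eqdif1}, while the binary part produces $m_X\circ(\kappa\otimes f^{\otimes\bullet})\circ\Delta$, which by the explicit formula \eqref{lieexplicit} for the generator of $\mathsf{Lie}\{1\}$ is exactly $\tfrac12[f,f]$ for the convolution bracket of Lemma \ref{liekoszul}. Thus $F(f)$ is a chain map iff $d_0f+\tfrac12[f,f]=0$, which is the first equality.

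The right-hand isomorphism is dual. The graded coalgebra map $G(f)\colon Y\to\Bar_\mathcal{O}(X)$ with cogenerator component $f$ is reconstructed from $f$ using pro-conilpotence of $Y$ (so that the successive corrections along the filtration $F^\bullet(Y/F^0Y)$ terminate on each element), and it commutes with $d_\Bar$ iff, after projecting to the cogenerators $X$, the same relation $d_0f+\tfrac12[f,f]=0$ holds: the internal differential of $\Bar_\mathcal{O}(X)$ contributes $d_0f$, while its binary coderivation part --- built from $\kappa$ exactly as the cobar differential --- contributes $\tfrac12[f,f]$. This gives the second equality and shows that the two extreme sets are identified compatibly with the common middle term, so that \eqref{eqsept1} is a chain of functorial isomorphisms.

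Finally, the whole point is that this classical argument, standard for a single dg vector space (see the twisting-morphism formalism of [LV, Ch. 11]), survives verbatim in the symmetric-sequence setting, because every structure map entering the bar and cobar differentials --- the map $can$ of \eqref{can}, the map $\kappa\circ\id$, the multiplication $m_X$, and the (co)free (co)algebra structure maps --- is a morphism of symmetric sequences, hence $\Sigma$-equivariant, and all the differentials act component-wise (see Section \ref{subsectionbar}). I expect the only genuine difficulty to be bookkeeping: tracking the signs and the degree shifts introduced by the operadic shift in $\mathcal{O}^\ash=\mathcal{O}^!\{-1\}$ and by the $[-1]$-shift of the convolution complex, so that the binary parts of $d_\Cobar$ and of $d_\Bar$ are matched with precisely $\tfrac12[f,f]$ and not some rescaling of it; the conceptual content --- two adjunctions together with the Koszul twisting morphism --- is otherwise routine.
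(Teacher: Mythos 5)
Your proposal is correct and follows essentially the same route as the paper's own (very brief) proof: both reduce the underlying sets to $\Hom_\Sigma(Y/F^0Y,X)^0$ via the free--forgetful adjunction and the cofree adjunction of Lemma \ref{lemmacogen}, and then identify compatibility with the (co)bar differentials with the Maurer--Cartan equation, exactly as in the classical twisting-morphism argument of [LV, 11.3.1]. The paper merely cites that argument and notes it transfers to symmetric sequences since all structure maps are $\Sigma$-equivariant and the differentials act component-wise, which is precisely the observation you make explicit (including the correct use of Lemma \ref{lemmafincoalg} to avoid completion issues).
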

The set in the middle of \eqref{eqsept1} is called the set of {\it twisted morphisms}.

\begin{proof}
The proof repeats the well-known argument for the case when $X$ and $Y$ are dg vector spaces (considered as arity-zero symmetric sequences), see e.g. [LV, 11.3.1]. We make use the adjunction given by Lemma \ref{lemmacogen} (as well as its more straightforward counter-part for $\Alg(\mathcal{O})$). The compatibility with the (co)bar-differentials is translated to the Maurer-Cartan equation in $\Hom_\Sigma(Y/(F^0Y),X)[-1]$. 
\end{proof}

\subsection{\sc A fragment of the $\mathcal{P}$-relative bar-cobar adjunction}\label{sectionbarrel}
Recall that a symmetric sequence $Z$ is a right $\mathcal{P}$-module, where $\mathcal{P}$ is an operad, if there is a map 
$$
m_Z\colon Z\circ \mathcal{P}\to {Z}
$$
satisfying the natural associativity and unit axioms.

Let $Z$ be a pro-conilpotent coalgebra over a cooperad $\mathcal{C}$ and a right module over an operad $\mathcal{P}$.

We say that these two structures are compatible, if the following conditions are fulfilled:
\begin{itemize}
\item[(i)]
the right $\mathcal{P}$-module structure on $Z$ descents to a right $\mathcal{P}$-module structure on $Z/(F^0Z)$, that is
\begin{equation}
m_Z(F^0(Z)\circ \mathcal{P})\subset F^0(Z)
\end{equation}
\item[(ii)]
\begin{equation}\label{eqbizz}
\xymatrix{
(Z/F^0Z)\circ\mathcal{P}\ar[r]^{m_\mathcal{P}}\ar[d]_{\Delta_Z\circ\id}   &Z/F^0Z\ar[r]^{\Delta_Z}&\mathcal{C}\circ_1(Z/F^0Z)\ar[r]^{can}&\mathcal{C}\circ  (Z/F^0Z)\ar[d]^{=}\\
(\mathcal{C}\circ_1 (Z/F^0Z))\circ \mathcal{P}\ar[r]^{can}   &(\mathcal{C}\circ (Z/F^0Z))\circ \mathcal{P}\ar[r]&\mathcal{C}\circ ((Z/F^0Z)\circ\mathcal{P})\ar[r]&C\circ (Z/F^0Z)
}
\end{equation}
Here $can$ denotes the canonical isomorphism \eqref{can0}.

\end{itemize}

We denote by $\mathscr{C}(\mathcal{C},\mathcal{P})$ the category of symmetric sequences $Z$ with the above conditions. 

For any right $\mathcal{P}$-module $Z$ and any cooperad $\mathcal{C}$, the cofree coalgebra $\mathscr{F}^\mathcal{C}(Z)$ is naturally an object of $\mathscr{C}(\mathcal{C},\mathcal{P})$.

One has:
\begin{lemma}
Let $\mathcal{O}$ be a Koszul operad, $\mathcal{P}$ an operad, $X$ a $\mathcal{O}{-}\mathcal{P}$-bimodule. Then the bar dg symmetric sequence $\Bar_\mathcal{O}(X)$ is a dg object of the category $\mathscr{C}(\mathcal{O}^\ash,\mathcal{P})$, what amounts to say that the bar-differential is compatible with the right $\mathcal{P}$-action.
\end{lemma}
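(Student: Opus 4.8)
The plan is to describe the right $\mathcal{P}$-module structure on $\Bar_\mathcal{O}(X)$ explicitly and then reduce the compatibility of the bar-differential $d_\Bar$ with it to the single identity asserting that the left $\mathcal{O}$-action and the right $\mathcal{P}$-action on $X$ commute. Recall that $\Bar_\mathcal{O}(X)=\mathscr{F}^{\mathcal{O}^\ash}(X)=\mathcal{O}^\ash\circ X$ as a graded symmetric sequence. Since $X$ is a right $\mathcal{P}$-module via $r_X\colon X\circ\mathcal{P}\to X$, the associativity of the composition product (\ref{asscirc}) gives $(\mathcal{O}^\ash\circ X)\circ\mathcal{P}=\mathcal{O}^\ash\circ(X\circ\mathcal{P})$, and $\rho:=\id_{\mathcal{O}^\ash}\circ r_X$ makes $\Bar_\mathcal{O}(X)$ a right $\mathcal{P}$-module. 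This is exactly the instance, noted just before the lemma, that the cofree coalgebra $\mathscr{F}^{\mathcal{O}^\ash}(X)$ on a right $\mathcal{P}$-module $X$ lies in $\mathscr{C}(\mathcal{O}^\ash,\mathcal{P})$ at the level of underlying graded objects; conditions (i) and (ii) there hold because $F^0\Bar_\mathcal{O}(X)=\k^{(0)}$ carries no $X$-slot while the action touches only the $X$-slots. The entire content of the lemma is therefore that $d_\Bar$ is a morphism of right $\mathcal{P}$-modules, i.e. $d_\Bar\circ\rho=\rho\circ(d_\Bar\circ\id_\mathcal{P})$, the contribution of $d_\mathcal{P}$ when $\mathcal{P}$ is dg being automatic since $r_X$ is itself a chain map.

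Because $\Bar_\mathcal{O}(X)=\mathcal{O}^\ash\circ X$ is the cofree $\mathcal{O}^\ash$-coalgebra cogenerated by $X$, a coderivation is determined naturally by its corestriction to the cogenerators $X$. The corestriction of $d_\Bar$ is the degree $+1$ map $\partial\colon\mathcal{O}^\ash\circ X\to X$ equal, under the $can$-identification of $\circ_1$ with $\circ$, to $m_X\circ(\kappa\circ\id_X)$, where $\kappa$ is the twisting morphism of Lemma \ref{liekoszul}. First I would verify that $\partial$ is right $\mathcal{P}$-linear. Unwinding $\partial\circ\rho$ and $r_X\circ(\partial\circ\id_\mathcal{P})$ through the identification $(\mathcal{O}^\ash\circ X)\circ\mathcal{P}=\mathcal{O}^\ash\circ(X\circ\mathcal{P})$ and using the bifunctoriality (interchange) of $\circ$ to commute $\kappa$ past $r_X$, both composites acquire the common right factor $\kappa\circ\id_{X\circ\mathcal{P}}$ and reduce to comparing $m_X\circ(\id_\mathcal{O}\circ r_X)$ with $r_X\circ(m_X\circ\id_\mathcal{P})$ as maps $\mathcal{O}\circ(X\circ\mathcal{P})\to X$. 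These two maps coincide precisely by the $\mathcal{O}{-}\mathcal{P}$-bimodule axiom, which asserts that the left and right actions on $X$ commute.

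Finally I would lift this from $\partial$ to the full coderivation $d_\Bar$. The coderivation is assembled from $\partial$ together with the cooperad coproduct $\Delta_{\mathcal{O}^\ash}$ of $\mathcal{O}^\ash$; the latter acts only on the $\mathcal{O}^\ash$-slot and so commutes with $\rho$, which acts only on the $X$-slots. Hence the explicit coderivation formula is built entirely out of right-$\mathcal{P}$-linear maps, yielding $d_\Bar\circ\rho=\rho\circ(d_\Bar\circ\id_\mathcal{P})$, which is the commuting square (ii) applied to $Z=\Bar_\mathcal{O}(X)$. The main obstacle is not conceptual but bookkeeping: one must keep the invariants/coinvariants identifications ($\circ$ versus $\circ_1$ and the $can$ isomorphisms, all invertible as $\cchar \k=0$) consistent throughout, and make precise the universal-property statement that right-$\mathcal{P}$-linearity of the corestriction propagates to the coderivation. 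Everything else reduces to the bimodule axiom established above.
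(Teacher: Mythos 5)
Your proof is correct and follows exactly the route the paper takes for granted: the paper states this lemma with no proof at all (just \qed), implicitly relying on the facts that the cofree coalgebra $\mathscr{F}^{\mathcal{O}^\ash}(X)$ on a right $\mathcal{P}$-module lies in $\mathscr{C}(\mathcal{O}^\ash,\mathcal{P})$ and that the coderivation $d_\Bar$ is built from maps of symmetric sequences (as remarked in Section \ref{subsectionbar}), so that its $\mathcal{P}$-linearity reduces to that of its corestriction. Your reduction of the corestriction's $\mathcal{P}$-linearity, via the interchange of $\kappa$ with $r_X$, to the single bimodule identity $m_X\circ(\id_\mathcal{O}\circ r_X)=r_X\circ(m_X\circ\id_\mathcal{P})$ is precisely the missing content, and your handling of $F^0$, of the $can$ identifications, and of the internal differential of $\mathcal{P}$ is sound.
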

\qed

For two right $\mathcal{P}$-modules $M,N\in\Vect(\k)_\Sigma$, define $\Hom_{\lev, \Mod{-}\mathcal{P}}(M,N)\in\Vect(\k)_\Sigma$ as the equalizer
\begin{equation}
\Hom_{\lev, \Mod{-}\mathcal{P}}(M,N)\to\Hom_{\lev}(M,N)\underset{v}{\overset{u}{\rightrightarrows}}\Hom_\lev(M\circ \mathcal{P},N)
\end{equation}
where, for $f\in \Hom_\lev(M,N)$,  $u(f)$ is defined as the composition $M\circ \mathcal{P}\xrightarrow{\mu_M} M\xrightarrow{f}N$, and $v(f)$ is defined as the composition $M\circ \mathcal{P}\xrightarrow{f\circ\id} N\circ \mathcal{P}\xrightarrow{\mu_N} N$. 

One easily shows that
\begin{equation}\label{eqsept10}
\Hom_{\Sigma,\Mod{-}\mathcal{P}}(M,N)=\Hom_{\lev,\Mod{-}\mathcal{P}}(M,N)\circ\k^{(0)}
\end{equation}
(the functor of invariants is right adjoint and thus commutes with the limits, and in the case of $\cchar \k=0$ the spaces of invariants and coinvariants are canonically isomorphic).

\vspace{2mm}

Let $\mathcal{O}$ be a Koszul operad, $\mathcal{P}$ an operad, $X$ an $\mathcal{O}$-$\mathcal{P}$-bimodule, $Y\in \mathscr{C}(\mathcal{O}^\ash,\mathcal{P})$.

Below we construct a dg Lie algebra structure on $\Hom_{\Sigma,\Mod{-}\mathcal{P}}(Y/\k,X)[-1]$. (For the case $\mathcal{P}=I$, this Lie algebra coincides with the one which figures in the middle term of \eqref{eqsept1}).

To this end, we construct an operad action 
$$
\Hom_\lev(\mathcal{O}^\ash,\mathcal{O})\circ \Hom_{\lev,\Mod{-}\mathcal{P}}(Y/F^0Y,X)\to \Hom_{\lev,\Mod{-}\mathcal{P}}(Y/F^0Y,X)
$$
For a general functor $F\colon \mathscr{C}\to\mathscr{D}$, and a diagram $D\colon I\to\mathscr{C}$, there is a canonical morphism in $\mathscr{D}$:
\begin{equation}
\xi\colon F(\lim D(i))\to \lim F(D(i))
\end{equation}
Therefore, one has a morphism in $\Vect(\k)_\Sigma$:
\begin{equation}\label{eqsept12}
\begin{aligned}
\ &\xi\colon \Hom_\lev(\mathcal{O}^\ash, \mathcal{O})\circ \Hom_{\lev,\Mod{-}\mathcal{P}}(Y/F^0Y,X)\to \\
&\lim\Big(\Hom_\lev(\mathcal{O}^\ash,\mathcal{O})\circ\Hom_\lev( Y/F^0Y,X)\underset{\id\circ v}{\overset{\id\circ u}{\rightrightarrows}}\Hom_\lev(\mathcal{O}^\ash,\mathcal{O})\circ\Hom_\lev( (Y/F^0Y)\circ\mathcal{P},X)\Big)
\end{aligned}
\end{equation}
The limit in the r.h.s. of \eqref{eqsept12} is further mapped to
\begin{equation}
\begin{aligned}
\ &\lim\Big(\Hom_\lev(\mathcal{O}^\ash,\mathcal{O})\circ\Hom_\lev( Y/F^0Y,X)\underset{\id\circ v}{\overset{\id\circ u}{\rightrightarrows}}\Hom_\lev(\mathcal{O}^\ash,\mathcal{O})\circ\Hom_\lev( (Y/F^0Y)\circ\mathcal{P},X)\Big)\overset{\footnotesize{\eqref{strange3}}}{\to}\\
&\lim\Big(\Hom_\lev(\mathcal{O}^\ash\circ (Y/F^0Y),\mathcal{O}\circ X)\underset{v_1}{\overset{u_1}{\rightrightarrows}}
\Hom_\lev(\mathcal{O}^\ash\circ( (Y/F^0Y)\circ \mathcal{P}),\mathcal{O}\circ X)\Big){=}\\
&\lim\Big(\Hom_\lev(\mathcal{O}^\ash\circ (Y/F^0Y),\mathcal{O}\circ X)\underset{v_2}{\overset{u_2}{\rightrightarrows}}
\Hom_\lev((\mathcal{O}^\ash\circ(Y/F^0Y))\circ \mathcal{P}),\mathcal{O}\circ X)\Big)\overset{*}{\to}\\
&\lim\Big(\Hom_\lev(Y/F^0Y,X)\underset{v}{\overset{u}{\rightrightarrows}}\Hom_\lev((Y/F^0Y)\circ\mathcal{P},X)\Big)=\\
&\Hom_{\lev,\Mod{-}\mathcal{P}}(Y/F_0Y,X)
\end{aligned}
\end{equation}
(the arrows $u_1,v_1$ and $u_2,v_2$ are clear, and we skip their definitions).
Here the crucial point is the arrow marked by $*$ in the third line. It encodes both diagram \eqref{eqbizz} (expressing that $Y\in\mathscr{C}(\mathcal{O}^\ash,\mathcal{P})$), and the property of $X$ being an $\mathcal{O}{-}\mathcal{P}$-bimodule. Indeed, 
the equality follows from commutativity of the two diagrams, corresponded to the cases $\omega=u$ and $\omega=v$ of the diagram below:
\begin{equation}\label{eqsept15}
\xymatrix{
\Hom_\lev(\mathcal{O}^\ash\circ (Y/F^0Y),\mathcal{O}\circ X)\ar[r]^{\omega_2\hspace{5mm}}\ar[d]&\Hom_\lev((\mathcal{O}^\ash\circ (Y/F^0Y))\circ\mathcal{P},\mathcal{O}\circ X)\ar[d]\\
\Hom_\lev(Y/F^0Y,X)\ar[r]^{\omega}&\Hom_\lev((Y/F^0Y)\circ \mathcal{P},X)
}
\end{equation}
The commutativity of \eqref{eqsept15} follows from \eqref{eqbizz}, whereas its commutativity for $\omega=v$ follows from the fact that $X$ is an $\mathcal{O}{-}\mathcal{P}$-bimodule.

One easily shows that the constructed map of symmetric sequences 
$$
\Hom_\lev(\mathcal{O}^\ash,\mathcal{O})\circ \Hom_{\lev,\Mod{-}\mathcal{P}}(Y/F^0Y,X)\to \Hom_{\lev,\Mod{-}\mathcal{P}}(Y/F^0Y,X)
$$
gives rise to an action of the operad $\Hom_\lev(\mathcal{O}^\ash,\mathcal{O})$ (see Corollary \ref{corollbm}) on $\Hom_{\lev,\Mod{-}\mathcal{P}}(Y/F^0Y,X)\in\Vect(\k)_\Sigma$.

\vspace{2mm}

\comment

The symmetric sequence $\Hom_\lev(Y/F^0Y,X)$ has a left action of the operad 
$\Hom_\Op(\mathcal{O}^\ash,\mathcal{O})$, and one has a map of operads $\mathsf{Lie}\{1\}\to\Hom_\Op(\mathcal{O}^\ash,\mathcal{O})$ by Lemma \ref{liekoszul}. 
This left action of $\Hom_\Op(\mathcal{O}^\ash,\mathcal{O})$ preserves the left $\mathcal{P}$-equivariant maps.
Indeed, one has
\begin{equation}
\begin{aligned}
\ &(Y/F^0Y)\circ \mathcal{P}\to \mathcal{O}^\ash\circ_1( (Y/F^0Y)\circ \mathcal{P})=\mathcal{O}^\ash\circ((Y/F^0Y)\circ \mathcal{P})=(\mathcal{O}^\ash\circ (Y/F^0Y))\circ \mathcal{P}=\\
&\mathcal{O}^\ash\circ((Y/F^0Y)\circ \mathcal{P})\xrightarrow{*}\mathcal{O}\circ (X\circ \mathcal{P})\to
(\mathcal{O}\circ X)\circ \mathcal{P}\to X\circ \mathcal{P}
\end{aligned}
\end{equation}
We see that the $\mathcal{P}$-equivariantity of all arrows $(Y/F^0Y)\to X$ in the arrow marked by $*$ implies the $\mathcal{P}$-equivariantity of the total map.

In the first equality, we identified $-\circ-$ with $-\circ_1-$, see \eqref{can0} and the discussion thereafter. 

\endcomment

One has a map of operads $\mathsf{Lie}\{1\}\to\Hom_\Op(\mathcal{O}^\ash,\mathcal{O})$, by Lemma \ref{liekoszul}. Therefore, \\
$\Hom_{\lev, \Mod{-}\mathcal{P}}(Y/F^0Y,X)$ becomes a $\mathsf{Lie}\{1\}$-algebra. The same is true for \\
$\Hom_{\lev, \Mod{-}\mathcal{P}}(Y/F^0Y,X)\circ \k$, see Remark \ref{remsept}. The latter space is identified with\\ $\Hom_{\Sigma, \Mod{-}\mathcal{P}}(Y/F^0Y,X)$, see  \eqref{eqsept10}.

It is an algebra over $\mathsf{Lie}\{1\}$. Therefore, $\Hom_{\Sigma,\Mod{-}\mathcal{P}}(Y/F^0Y,X)[-1]$ is a dg  Lie algebra (the differential is $d_0$, that is, it comes from the inner differentials on $X$ and $Y$).

Now we can formulate our statement:
\begin{prop}\label{baradjrel}
Let $\mathcal{O}$ be a Koszul biaugmented operad, $\mathcal{P}$ an operad, $X$ a $\mathcal{O}{-}\mathcal{P}$-bimodule, $Y$ an object of the category $\mathscr{C}(\mathcal{O}^\ash,\mathcal{P})$. One has:
\begin{equation}
\MC(\Hom_{\Sigma,\Mod{-}\mathcal{P}}({Y}/(F^0Y),X)[-1])=\Hom_{\Sigma,\mathscr{C}(\mathcal{O}^\ash,\mathcal{P})}(Y,\Bar_\mathcal{O}(X))
\end{equation}
\end{prop}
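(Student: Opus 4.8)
The plan is to deduce the relative statement from the absolute one, Proposition \ref{propmainconj}, by showing that the bijection furnished there restricts to the two $\mathcal{P}$-decorated subsets appearing in the present claim. Both sides of the desired identity sit inside the corresponding sides of \eqref{eqsept1}. On the left, the relative convolution Lie algebra $\Hom_{\Sigma,\Mod{-}\mathcal{P}}(Y/F^0Y,X)[-1]$ is, by the very construction of its bracket (the operad action built in \eqref{eqsept12}--\eqref{eqsept15}), a dg Lie subalgebra of $\Hom_\Sigma(Y/F^0Y,X)[-1]$; hence its Maurer--Cartan set is exactly the set of those Maurer--Cartan elements of the ambient Lie algebra that are $\mathcal{P}$-equivariant. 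On the right, $\Hom_{\Sigma,\mathscr{C}(\mathcal{O}^\ash,\mathcal{P})}(Y,\Bar_\mathcal{O}(X))$ is the subset of $\Hom_{\Sigma,\Coalg(\mathcal{O}^\ash)}(Y,\Bar_\mathcal{O}(X))$ consisting of the dg $\mathcal{O}^\ash$-coalgebra maps that are in addition maps of right $\mathcal{P}$-modules (recall $\Bar_\mathcal{O}(X)\in\mathscr{C}(\mathcal{O}^\ash,\mathcal{P})$ by the lemma of Section \ref{sectionbarrel}). So it suffices to check that the bijection $\Theta$ of Proposition \ref{propmainconj} carries $\mathcal{P}$-equivariant Maurer--Cartan elements precisely onto the right $\mathcal{P}$-module coalgebra maps.

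The key step is a relative version of the cofree-coalgebra adjunction of Lemma \ref{lemmacogen}: for $Y\in\mathscr{C}(\mathcal{O}^\ash,\mathcal{P})$ and any right $\mathcal{P}$-module $Z$, I would establish
\begin{equation}
\Hom_{\Sigma,\mathscr{C}(\mathcal{O}^\ash,\mathcal{P})}(Y,\mathscr{F}^{\mathcal{O}^\ash}(Z))=\Hom_{\Sigma,\Mod{-}\mathcal{P}}(Y/F^0Y,Z),
\end{equation}
the bijection being $\phi\mapsto\bar\phi$, composition with the ($\mathcal{P}$-equivariant) corestriction $\mathscr{F}^{\mathcal{O}^\ash}(Z)\to Z$. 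Granting this, I apply it with $Z=X$ regarded as a graded right $\mathcal{P}$-module (forgetting differentials, using $\Bar_\mathcal{O}(X)=\mathscr{F}^{\mathcal{O}^\ash}(X)$ as graded objects of $\mathscr{C}$). Under the absolute adjunction of Lemma \ref{lemmacogen}, a coalgebra map $\phi\colon Y\to\Bar_\mathcal{O}(X)$ corresponds to its corestriction $\bar\phi\colon Y/F^0Y\to X$, and the two adjunctions share this same assignment. Thus, if $\phi$ is a right $\mathcal{P}$-module map then $\bar\phi$, being its composition with the $\mathcal{P}$-equivariant projection $\Bar_\mathcal{O}(X)\to X$, is $\mathcal{P}$-equivariant; conversely the relative adjunction shows that a $\mathcal{P}$-equivariant $\bar\phi$ reconstructs a $\phi$ that is automatically a map of right $\mathcal{P}$-modules. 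Since $\Theta$ already matches ``$\bar\phi$ is Maurer--Cartan'' with ``$\phi$ commutes with the differentials'' (Proposition \ref{propmainconj}), intersecting these two matched conditions yields the claimed bijection.

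It remains to prove the relative adjunction, and this is where I expect the real work to lie. One reconstructs $\phi$ from $\bar\phi$ exactly as in the proof of Lemma \ref{lemmacogen}, componentwise through the iterated coproduct of $Y$ followed by $\bar\phi^{\boxtimes n}$; the only new point is that the resulting $\phi$ intertwines the right $\mathcal{P}$-actions. This is precisely the commutativity encoded by diagram \eqref{eqbizz} (the defining condition of $\mathscr{C}(\mathcal{O}^\ash,\mathcal{P})$), which makes the coproduct of $Y$ compatible with its $\mathcal{P}$-action, combined with the $\mathcal{P}$-equivariance of $\bar\phi$ and with the cofree right $\mathcal{P}$-action on $\mathscr{F}^{\mathcal{O}^\ash}(X)$ induced from that on $X$. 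The same mechanism, through diagrams \eqref{eqbizz} and \eqref{eqsept15} and the bimodule structure of $X$ (the arrow marked $*$ in Section \ref{sectionbarrel}), is what guarantees that the Maurer--Cartan/bar-differential condition is preserved under passing to $\mathcal{P}$-equivariant maps. I anticipate the principal obstacle to be the bookkeeping of these equivariance verifications rather than any conceptual difficulty: apart from them, the argument is the verbatim absolute one of Proposition \ref{propmainconj}.
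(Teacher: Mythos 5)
Your proposal is correct and takes essentially the same route as the paper's proof: the cofree-coalgebra adjunction of Lemma \ref{lemmacogen} identifies coalgebra maps $Y\to\Bar_\mathcal{O}(X)$ with their corestrictions $Y/F^0Y\to X$, right $\mathcal{P}$-equivariance transfers both ways through this bijection (your verification via diagram \eqref{eqbizz} is exactly the content of the paper's unelaborated assertion that ``$t$ is a map of right $\mathcal{P}$-modules if and only if $t'$ is''), and compatibility with the bar differential translates into the Maurer--Cartan equation. The only difference is presentational: you isolate the relative cofree adjunction as an explicit lemma and frame the statement as restricting the bijection of Proposition \ref{propmainconj} to the $\mathcal{P}$-equivariant subsets, details the paper compresses into three sentences.
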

\begin{proof}
After all preparations, the statement becomes almost trivial. A map $t\colon Y\to \Bar_\mathcal{O}(X)$ of the underlying coalgebras  is the same that a map $t^\prime\colon Y/F^0Y\to X$, by Lemma \ref{lemmacogen}. The map $t$ is a map of right $\mathcal{P}$-modules if and only if  the map $t^\prime$ is. Finally, the compatibility of $t$ with the bar-differential results in the Maurer-Cartan equation on $t^\prime$. 
\end{proof}

\section{\sc Deformation theory of a morphism of operads $\mathsf{e}_n\to\mathcal{P}$ with a cocommutative base}\label{defcomm}

\subsection{\sc The case $\mathcal{P}=[X,X]$}\label{defcomm1}

\subsubsection{\sc }\label{defcomm11}
Let $V\in\Vect(\k)$ be a (dg) vector space over $\k$, $X\in\Vect(\k)_\Sigma$ a symmetric sequence.
Recall the symmetric sequence $V\star X:=V^{(0)}\boxtimes X$ (that is, $(V\star X)(n)=V\otimes X(n)$).

One has the adjunction 
\begin{equation}\label{adjstar}
\Hom_{\Sigma}(V\star X,Y)=\Hom_\k(V,\Hom_{\Sigma}(X,Y))
\end{equation}
One has the following fact:
\begin{lemma}\label{lemmastarprod}
Let $V\in \Vect(\k)$, $X\in\Vect(\k)_\Sigma$. The following statements are true:
\begin{itemize}
\item[(i)] Let $\mathcal{P}_1,\mathcal{P}_2$ are operads, $V$ is an algebra over $\mathcal{P}_1$, $X$ an algebra over $\mathcal{P}_2$. Then $V\star X$ is an algebra over the operad $\mathcal{P}_1\otimes_\lev \mathcal{P}_2$.
\item[(ii)] Let $\mathcal{C}_1,\mathcal{C}_2$ are cooperads, $V$ is a coalgebra over $\mathcal{C}_1$, $X$ is a coalgebra $\mathcal{C}_2$. Then $V\star X$ is a coalgebra over $\mathcal{C}_1\otimes_\lev\mathcal{C}_2$.
\end{itemize}
\end{lemma}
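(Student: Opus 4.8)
The plan is to produce, in case (i), the left--module structure map
$$
m\colon (\mathcal{P}_1\otimes_\lev\mathcal{P}_2)\circ(V\star X)\to V\star X,
$$
and, in case (ii), the dual comodule structure map
$$
c\colon V\star X\to (\mathcal{C}_1\otimes_\lev\mathcal{C}_2)\hat{\circ}_1(V\star X),
$$
directly out of the given structures on $V$ and $X$, and then to check the (co)associativity and (co)unit axioms. The targets are (co)operads by the Corollary following the $\eta,\mu$-Lemma. The two cases are mutually dual, and since $\cchar\k=0$ identifies invariants with coinvariants (the map \eqref{can0} is an isomorphism), they are genuinely parallel; so it suffices to describe (i) in detail.

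The computational input is the decomposition of the iterated $\boxtimes$--powers of $V\star X=V^{(0)}\boxtimes X$. Because $\boxtimes$ is symmetric monoidal with unit $\k^{(0)}$, one has a natural isomorphism $(V^{(0)}\boxtimes X)^{\boxtimes k}\cong (V^{(0)})^{\boxtimes k}\boxtimes X^{\boxtimes k}=(V^{\otimes k})^{(0)}\boxtimes X^{\boxtimes k}$, so that at arity $n$ one has $(V\star X)^{\boxtimes k}(n)\cong V^{\otimes k}\otimes X^{\boxtimes k}(n)$, with $\Sigma_k$ acting diagonally (by permutation on $V^{\otimes k}$ and by block permutation on $X^{\boxtimes k}(n)$). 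Evaluating the source of $m$ then gives
$$
\big((\mathcal{P}_1\otimes_\lev\mathcal{P}_2)\circ(V\star X)\big)(n)=\bigoplus_{k\ge 0}\big(\mathcal{P}_1(k)\otimes\mathcal{P}_2(k)\big)\otimes_{\Sigma_k}\big(V^{\otimes k}\otimes X^{\boxtimes k}(n)\big).
$$
I would define $m$ on the $k$--th summand by reshuffling the tensor factors into $\big(\mathcal{P}_1(k)\otimes V^{\otimes k}\big)\otimes\big(\mathcal{P}_2(k)\otimes X^{\boxtimes k}(n)\big)$ and applying $a_V\otimes a_X$, where $a_V\colon\mathcal{P}_1(k)\otimes V^{\otimes k}\to V$ is the conventional $\mathcal{P}_1$--algebra structure on $V$ and $a_X\colon\mathcal{P}_2(k)\otimes X^{\boxtimes k}(n)\to X(n)$ is the $\mathcal{P}_2$--module structure on $X$. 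Both $a_V$ and $a_X$ are $\Sigma_k$--equivariant, so their product is equivariant for the diagonal action and descends to the coinvariants $\otimes_{\Sigma_k}$. This is exactly the interchange philosophy behind the map $\eta$ of \eqref{strange1}, adapted to the $\boxtimes$--product defining $V\star X$.

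It then remains to verify the axioms. Associativity of $m$ follows termwise from associativity of $a_V$ and of $a_X$, together with the compatibility of the composition product $\circ$ with $\boxtimes$; unitality holds because the operad identity of $\mathcal{P}_1\otimes_\lev\mathcal{P}_2$ is $\id_{\mathcal{P}_1}\otimes\id_{\mathcal{P}_2}\in\mathcal{P}_1(1)\otimes\mathcal{P}_2(1)$, which acts by $a_V(\id\otimes-)\otimes a_X(\id\otimes-)=\id_V\otimes\id_X$. For (ii) I would run the same argument with all arrows reversed, using the coalgebra structure maps $c_V\colon V^{(0)}\to\mathcal{C}_1\hat{\circ}_1 V^{(0)}$ and $c_X\colon X\to\mathcal{C}_2\hat{\circ}_1 X$ and the invariants--version map $\mu$ of \eqref{strange2}; the same $\boxtimes$--decomposition produces a map into $(\mathcal{C}_1\otimes_\lev\mathcal{C}_2)\hat{\circ}_1(V\star X)$. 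I expect the only genuine bookkeeping to be the $\Sigma_k$--equivariance of the reshuffling map (so that the structure maps descend to coinvariants in (i) and lift to invariants in (ii)), and, in case (ii), the verification that the construction indeed lands in the completed product $\hat{\circ}_1$ and respects counit and coassociativity; everything else is forced by naturality.
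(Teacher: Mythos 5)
Your proof is correct and takes essentially the same route as the paper's: the paper likewise combines the two structure maps $\mathcal{P}_1(k)\otimes_{\Sigma_k}V^{\otimes k}\to V$ and $\mathcal{P}_2(k)\otimes_{\Sigma_k}X^{\boxtimes k}(n)\to X(n)$ by the same interchange of tensor factors into a single map $(\mathcal{P}_1(k)\otimes\mathcal{P}_2(k))\otimes_{\Sigma_k}\bigl(V^{\otimes k}\otimes X^{\boxtimes k}(n)\bigr)\to V\otimes X(n)$, descending to $\Sigma_k$-coinvariants by diagonal equivariance, and treats (ii) as the dual/analogous case. Your version only adds the routine verification of the (co)associativity and (co)unit axioms, which the paper leaves implicit.
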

\begin{proof}
\

(i): One has maps $$\mathcal{P}_1(k)\otimes_{\Sigma_k}V^{\otimes k}\to V$$ and 
$$\mathcal{P}_2(k)\otimes_{\Sigma_k}\Ind_{\Sigma_{n_1}\times\dots\times\Sigma_{n_k}}^{\Sigma_{n_1+\dots+n_k}}(X(n_1)\otimes\dots\otimes X(n_k))\to X(n_1+\dots+n_k)$$
They give
$$
(\mathcal{P}_1(k)\otimes_\lev \mathcal{P}_2(k))\otimes_{\Sigma_k}\Ind_{\Sigma_{n_1}\times\dots\times\Sigma_{n_k}}^{\Sigma_{n_1+\dots+n_k}}(V^{\otimes k}\otimes X(n_1)\otimes \dots\otimes X(n_k))\to V\otimes X_{n_1+\dots+n_k}
$$
which gives a structure of an algebra over $\mathcal{P}_1\otimes_\lev\mathcal{P}_2$ on $V\star X$.

(ii): is analogous.
\end{proof}

\subsubsection{\sc}\label{defcomm12}
Let $\mathcal{O}$ be a Koszul operad, $X\in \Vect(\k)_\Sigma$ an $\mathcal{O}$-algebra. Recall the dg sequence $$\Bar_\mathcal{O}(X)=(\mathcal{O}^\ash\circ X,d_0+d_\Bar)$$
see Section \ref{subsectionbar}. It is a coalgebra over the cooperad $\mathcal{O}^\ash$. 

Let $X,Y\in\Vect(\k)_\Sigma$ be $\mathcal{O}$-algebras, $f\colon X\to Y$ a map of $\mathcal{O}$-algebra. It defines a map 
$$
\Bar(f)\colon \Bar_\mathcal{O}(X)\to\Bar_{\mathcal{O}}(Y)
$$
of dg symmetric sequences of coalgebras over $\mathcal{O}^\ash$.

We prefer to deal with coalgebras over finite biaugmented cooperads, as the concept of a pro-conilpotent coalgebra over a cooperad, see Section \ref{subsectioncoalg}, necessary for the adjunction in Proposition \ref{propmainconj}, exists only for such cooperads. 
In our case, $\mathcal{O}=\mathsf{e}_n$, the cooperad $\mathsf{e}_n^\ash$ is not biaugmented.
On the other hand, $\mathsf{e}_n^\ash=\mathsf{e}_n^*\{-n\}$, and the cooperad $\mathsf{e}_n^*$ is biaugmented.
Therefore, we replace the map $\Bar(f)$ of $\mathsf{e}_n^!$-coalgebras by the corresponding map of $\mathsf{e}_n^\ash\{n\}=e_n^*$-coalgebras.

Consider
$$
\Bar(f)\{n\}\colon\Bar_{\mathsf{e}_n^\ash}(X)\{n\}\to \Bar_{\mathsf{e}_n^\ash}(Y)\{n\}
$$
One has:
\begin{equation}\label{barshifted1}
(\mathsf{e}_n^\ash\circ X)\{n\}=(\mathsf{e}_n^\ash\{n\})\circ (X\{n\})=\mathsf{e}_n^*\circ (X\{n\})
\end{equation}
Denote
\begin{equation}\label{barshifted2}
\Bar_n(X)=(\mathsf{e}_n^*\circ (X\{n\}), d_\Bar\{n\})
\end{equation}
By abuse of notations, we will use the notation $d_\Bar$ for both the shifted differential $d_\Bar\{n\}$ and the original bar differential.

Let $a$ be a coalgebra over $\mathsf{Comm}^*$. By Lemma \ref{lemmastarprod},
$$
a\star \Bar_n(X)
$$
is a coalgebra over $\mathsf{Comm}^*\otimes \mathsf{e}_n^*=\mathsf{e}_n^*$. 

Let $f\colon X\to Y$ be as above.
Denote by
$$\Coalg=\Coalg(\mathsf{Comm}^*)$$ 
the category of pro-conilpotent cocommutative coalgebras over $\k$. Define the functor 
$$
F_{X,Y}^f\colon \Coalg\to\Sets
$$,
as follow:
\begin{equation}\label{funf}
F_{X,Y}^f(a)=\Big\{\phi\in \Hom_{\Sigma,\Coalg(\mathsf{e}_n^*)}\big(a\star\Bar_n(X),\Bar_n(Y)\big), \ \ \phi\circ \eta=\Bar(f)\{n\}\Big\}
\end{equation}
Here $\eta\colon \k\to a$ is the coaugmentation of $a$. 

Our first task is to show that this functor is representable.

\subsubsection{\sc}
The following Proposition easily follows from the bar-cobar duality in symmetric sequences, proven in Section \ref{barcobaradj}.
\begin{prop}\label{propbarcobarbis}
Let $X\in \Vect(\k)_\Sigma$ be a $\mathsf{e}_n$-algebra, and let $C\in\Vect(\k)_\Sigma$ be a pro-conilpotent $\mathsf{e}_n^*$-coalgebra. One has:
\begin{equation}\label{barcobarbis}
\Hom_{\Sigma,\Coalg(\mathsf{e}_n^*)}(C,\Bar_{n}(X))=\MC\Big(\Hom_{\Sigma}(C/F^0C, X\{n\})[-1]\Big)
\end{equation}
where $\Coalg(\mathsf{e}_n^*)$ stands for the category of pro-conilpotent $\mathsf{e}_n^*$-coalgebras.
\end{prop}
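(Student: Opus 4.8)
The plan is to deduce Proposition~\ref{propbarcobarbis} as a special case of the $\mathcal{P}$-relative bar-cobar adjunction already established in Proposition~\ref{baradjrel}. Indeed, taking $\mathcal{P}=I$ (the trivial operad) collapses the category $\mathscr{C}(\mathcal{C},\mathcal{P})$ to the category of pro-conilpotent $\mathcal{C}$-coalgebras, the relative Hom $\Hom_{\Sigma,\Mod{-}I}(-,-)$ reduces to ordinary $\Hom_\Sigma(-,-)$, and the relative convolution Lie algebra becomes the absolute one of Section~\ref{section0233}. So the first step is to specialize $\mathcal{O}=\mathsf{e}_n$, which gives the isomorphism
\begin{equation}
\MC\Big(\Hom_{\Sigma}(C/F^0C,X)[-1]\Big)=\Hom_{\Sigma,\Coalg(\mathsf{e}_n^\ash)}(C,\Bar_{\mathsf{e}_n}(X)).
\end{equation}

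The only genuine content beyond this specialization is the passage from the cooperad $\mathsf{e}_n^\ash$ to the biaugmented cooperad $\mathsf{e}_n^*=\mathsf{e}_n^\ash\{n\}$, i.e.\ the operadic shift by $n$. First I would invoke Lemma~\ref{lemmashift}(i) to identify $(\mathsf{e}_n^\ash\circ X)\{n\}=\mathsf{e}_n^*\circ(X\{n\})$, which is precisely the definition of $\Bar_n(X)$ in \eqref{barshifted2}; this shows $\Bar_n(X)=\Bar_{\mathsf{e}_n}(X)\{n\}$ as dg $\mathsf{e}_n^*$-coalgebras, the differential $d_\Bar\{n\}$ being the transported bar-differential. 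Since the functor $-\{n\}$ is invertible on symmetric sequences and carries $\mathsf{e}_n^\ash$-coalgebras to $\mathsf{e}_n^*$-coalgebras (and respects pro-conilpotence, as the filtration of Section~\ref{subsectioncoalg} is defined purely in terms of the comultiplication, which is shift-equivariant), applying $-\{n\}$ to the displayed isomorphism yields
\begin{equation}
\Hom_{\Sigma,\Coalg(\mathsf{e}_n^*)}(C,\Bar_n(X))=\MC\Big(\Hom_\Sigma\big((C/F^0C)\{n\},X\{n\}\big)[-1]\Big).
\end{equation}

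Finally, I would use Lemma~\ref{lemmashift}(iv), giving $\Hom_\lev(X\{n\},Y\{n\})=\Hom_\lev(X,Y)$, together with its consequence $\Hom_\Sigma((C/F^0C)\{n\},X\{n\})=\Hom_\Sigma(C/F^0C,X\{n-n\})$ rewritten so that the Maurer-Cartan set is computed in $\Hom_\Sigma(C/F^0C,X\{n\})[-1]$ exactly as claimed in \eqref{barcobarbis}; here one must check that the $\mathsf{Lie}\{1\}$-structure (hence the MC equation) transported by the shift agrees with the one coming from the twisting morphism $\kappa\colon\mathsf{e}_n^*\to\mathsf{e}_n\{n\}$, which follows because all the maps entering Lemma~\ref{liekoszul} and the convolution bracket of Section~\ref{section021} are natural with respect to $-\{1\}$ by the compatibilities (i),(ii),(iv) of Lemma~\ref{lemmashift}.

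I expect the main obstacle to be purely bookkeeping rather than conceptual: one must track the operadic shift consistently through the convolution Lie bracket so that the target of the Maurer-Cartan equation lands in $X\{n\}$ (not $X$) and carries the correct degree shift $[-1]$, and verify that pro-conilpotence is preserved under $-\{n\}$. Once the shift-naturality of the constructions in Sections~\ref{section021}--\ref{section0233} is acknowledged, the proof is, as the author notes for Proposition~\ref{baradjrel}, essentially a matter of assembling the already-proven adjunctions.
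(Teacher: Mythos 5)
You have the right reduction in mind, but you run it in the wrong order, and your first step is exactly the one the paper's framework forbids. Both Proposition \ref{propmainconj} and Proposition \ref{baradjrel} (your $\mathcal{P}=I$ detour changes nothing here, and \ref{baradjrel} carries the same hypothesis) require the cooperad $\mathcal{O}^\ash$ to be finite and \emph{biaugmented}: the biaugmentation enters the very definition of pro-conilpotent coalgebras and of the cofree coalgebra $\mathscr{F}^{\mathcal{C}}(V)$ (Section \ref{subsectioncoalg}, Lemma \ref{lemmacogen}), on which the whole adjunction rests. For $\mathcal{O}=\mathsf{e}_n$ this hypothesis fails: as the paper stresses in Section \ref{defcomm12}, $\mathsf{e}_n^\ash=\mathsf{e}_n^*\{-n\}$ is \emph{not} biaugmented; only the unshifted $\mathsf{e}_n^*$ is. Consequently your first displayed isomorphism, $\MC\big(\Hom_\Sigma(C/F^0C,X)[-1]\big)=\Hom_{\Sigma,\Coalg(\mathsf{e}_n^\ash)}(C,\Bar_{\mathsf{e}_n}(X))$, is not an instance of anything proved in the paper — the category of pro-conilpotent $\mathsf{e}_n^\ash$-coalgebras is never even defined — and your parenthetical claim that pro-conilpotence ``is defined purely in terms of the comultiplication'' glosses over precisely the missing counit and coaugmentation data. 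Repairing this by \emph{defining} the unshifted notions via transport along $-\{n\}$ would be circular: it amounts to proving the shifted statement first.

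The fix is to shift \emph{before} invoking the adjunction, which is the paper's entire proof: apply Proposition \ref{propmainconj} with $\mathcal{O}=\mathsf{e}_n\{n\}$. Then $\mathcal{O}^\ash=(\mathsf{e}_n\{n\})^\ash=\mathsf{e}_n^*$ is finite and biaugmented (see Section \ref{sectionintrotam}), $X\{n\}$ is an $\mathsf{e}_n\{n\}$-algebra, and by \eqref{barshifted1}--\eqref{barshifted2} one has $\Bar_n(X)=\Bar_{\mathsf{e}_n\{n\}}(X\{n\})$ by definition, so \eqref{eqsept1} specializes verbatim to \eqref{barcobarbis}; the shift-naturality of the convolution bracket that you flag as the ``main obstacle'' never needs to be checked, because Lemma \ref{liekoszul} is simply applied to $\mathsf{e}_n\{n\}$ directly. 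Note also a bookkeeping slip at the end of your argument: by Lemma \ref{lemmashift}(iv), $\Hom_\Sigma((C/F^0C)\{n\},X\{n\})$ equals $\Hom_\Sigma(C/F^0C,X)$, not $\Hom_\Sigma(C/F^0C,X\{n\})$; the target $X\{n\}$ in \eqref{barcobarbis} appears because the coalgebra variable must be relabelled — the $C$ of the Proposition is the already shifted $\mathsf{e}_n^*$-coalgebra, i.e. $C'\{n\}$ for the $\mathsf{e}_n^\ash$-coalgebra $C'$ of your intermediate step — and your second display conflates the two names of $C$. So while your ingredients (the general adjunction plus the operadic shift) are the paper's, the argument as structured rests its first step on a case excluded by the hypotheses, and it is salvaged exactly by collapsing it to the paper's one-line proof.
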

\begin{proof}
It follows from Proposition \ref{propmainconj} for $\mathcal{O}=\mathsf{e}_n\{n\}$.
\end{proof}

\subsubsection{\sc}
By Proposition \ref{propbarcobarbis}, one has:
\begin{equation}\label{funf2}
F^f_{X,Y}(a)=\Big\{\theta\in \MC\big(\Hom_{\Sigma}((a\star \Bar_n(X))/\k,Y\{n\})[-1]\big),\ \theta\circ \eta=\Bar(f)_\pr\Big\}
\end{equation}
where
\begin{equation}\label{funf3}
\Bar(f)_\pr\in \MC(\Hom_\Sigma(\Bar_n(X\{n\})/\k,Y\{n\})[-1])
\end{equation}
is the element corresponded to $\Bar(f)$ by \eqref{barcobarbis}.

One further has:
\begin{equation}\label{funf4}
F^f_{X,Y}(a)=\Big\{ \theta\in \MC\big(\Hom_\k(a,\Hom_\Sigma(\Bar_n(X\{n\})/\k,Y\{n\})[-1])\big), \theta\circ \eta=\Bar(f)_\pr\Big\}
\end{equation}
We made use the adjunction \eqref{adjstar}.

Consider the graded vector space 
$$
\Hom_\Sigma(\Bar_n(X)/\k,Y\{n\})=
\Hom_{\Sigma}(\mathsf{e}_n^*\circ X\{n\},Y\{n\})
$$
By Lemma \ref{lca}, it is an algebra over the operad $\Hom_\Op(\mathsf{e}_n^*,\mathsf{e}_n\{n\})=\Hom_\Op(\mathsf{e}_n^\ash,\mathsf{e}_n)$. 

By Lemma \ref{liekoszul}, there is a map of operads
\begin{equation}
\mathsf{Lie}\{1\}\to \Hom_\Op(\mathcal{O}^\ash,\mathcal{O})
\end{equation}
for any Koszul operad $\mathcal{O}$.

It makes $\Hom_\Sigma(\mathcal{O}^\ash\circ X,Y)[-1]$ is an algebra over the same operad $\mathsf{Lie}$. 
In our situation, it implies that $\Hom_\Sigma(\mathsf{e}_n^*\circ X\{n\},Y\{n\})[-1]$ is a graded Lie algebra.
One easily sees that the inner differential $d_0$ (which is equal to 0 provided the differentials on $X, Y$ are 0), and the differential $d_\Bar$, are compatible with this Lie algebra structure.

Consider the dg Lie algebra
\begin{equation}
\begin{aligned}
\Def_0(X,Y)=&\big(\Hom_\Sigma(\Bar_n(X)/\k,Y\{n\})[-1], d=d_0+d_\Bar\big)\\
=&\big(\Hom_{\Sigma}(\mathsf{e}_n^*, [X\{n\},Y\{n\}])[-1], d=d_0+d_\Bar^\sim\big)
\end{aligned}
\end{equation}
where $d_\Bar^\sim$ is the differential corresponded to $d_\Bar$ by the adjunction, and $[X\{n\},Y\{n\}]=[X,Y]\{n\}$ by Lemma \ref{lemmashift}.

The element $\Bar(f)_\pr$ is a degree 1 Maurer-Cartan element in $\Def_0(X,Y)$. Denote 
\begin{equation}
d_f=\ad(\Bar(f)_\pr)
\end{equation}
and define
\begin{equation}
\begin{aligned}
\Def(X\xrightarrow{f}Y)=&\big(\Hom_\Sigma(\Bar_n(X)/\k,Y\{n\})[-1], d=d_0+d_\Bar+d_f\big)\\
=&\big(\Hom_\Sigma(\mathsf{e}_n^*,[X,Y]\{n\})[-1],d=d_0+d_\Bar^\sim+d_f^\sim\big)
\end{aligned}
\end{equation}
It is a dg Lie algebra. 

\subsubsection{\sc}\label{defcomm12bis}
One has:
\begin{prop}\label{propreprbasic}
Let $X,Y$ be $\mathsf{e}_n$-algebras in $\Vect(\k)_\Sigma$, and let $f\colon X\to Y$ be a map of $\mathsf{e}_n$-algebras. The functor
$F_{X,Y}^f$ is representable, by the dg coalgebra
\begin{equation}
a_\rep={C}_{\CE}(\Def(X\xrightarrow{f}Y),\k)
\end{equation}
which is the Chevalley-Eilenberg chain complex of the dg Lie algebra $\Def(X\xrightarrow{f}Y)$.
\end{prop}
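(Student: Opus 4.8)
The plan is to strip the boundary condition $\theta\circ\eta=\Bar(f)_\pr$ off the functor by a twist, and then to recognise the resulting functor as the Maurer--Cartan functor of the dg Lie algebra $\Def(X\xrightarrow{f}Y)$, which is represented by its Chevalley--Eilenberg chain complex.

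I would start from the description \eqref{funf4}, writing $\mathfrak{g}_0=\Def_0(X,Y)=\Hom_\Sigma(\Bar_n(X)/\k,Y\{n\})[-1]$ for the untwisted dg Lie algebra, so that
$$
F_{X,Y}^f(a)=\big\{\theta\in\MC(\Hom_\k(a,\mathfrak{g}_0)),\ \theta\circ\eta=\Bar(f)_\pr\big\},
$$
where $\Hom_\k(a,\mathfrak{g}_0)$ carries the convolution dg Lie structure built from the coproduct of $a$ and the bracket of $\mathfrak{g}_0$. Using the coaugmentation I split $a=\k\!\cdot\!1\oplus\bar a$ with $\bar a=a/F^0a$, and decompose any linear map as $\theta=\Bar(f)_\pr\cdot\varepsilon_a+\theta'$, where $\varepsilon_a\colon a\to\k$ is the counit and $\theta'$ vanishes on the coaugmentation, hence $\theta'\in\Hom_\k(\bar a,\mathfrak{g}_0)$. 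In this form the constraint $\theta\circ\eta=\Bar(f)_\pr$ is exactly the statement $\theta(1)=\Bar(f)_\pr$, which is built into the decomposition, so it is equivalent to imposing no condition on $\theta'$.

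The key computation is then to expand the Maurer--Cartan equation $d\theta+\frac12[\theta,\theta]=0$ along this splitting. Because $\varepsilon_a$ is a chain map to $\k$ and $(\varepsilon_a\otimes\varepsilon_a)\circ\Delta_a=\varepsilon_a$, the terms supported on the coaugmentation collect into $\big(d_{\mathfrak{g}_0}(\Bar(f)_\pr)+\frac12[\Bar(f)_\pr,\Bar(f)_\pr]\big)\cdot\varepsilon_a$, which vanishes since $\Bar(f)_\pr$ is a Maurer--Cartan element of $\mathfrak{g}_0$; the cross term contributes $\ad(\Bar(f)_\pr)\circ\theta'=d_f\theta'$; and what survives is
$$
(d_0+d_\Bar+d_f)\theta'+\frac12[\theta',\theta']=0,
$$
the Maurer--Cartan equation for $\theta'$ in the convolution Lie algebra $\Hom_\k(\bar a,\Def(X\xrightarrow{f}Y))$ formed from the twisted differential. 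Thus $F_{X,Y}^f(a)=\MC(\Hom_\k(\bar a,\Def(X\xrightarrow{f}Y)))$. This twisting step is the main point to get right; the only real care needed is the bookkeeping of signs and of the shifts $[-1]$ and $\{n\}$ already absorbed into $\Def_0$ and $\Def$.

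Finally I would invoke the representability of the Maurer--Cartan functor of a dg Lie algebra $\mathfrak{g}$ on pro-conilpotent cocommutative coalgebras by its Chevalley--Eilenberg chain complex, namely the natural isomorphism
$$
\MC(\Hom_\k(\bar a,\mathfrak{g}))=\Hom_{\Coalg}(a,C_\CE(\mathfrak{g},\k)),
$$
applied to $\mathfrak{g}=\Def(X\xrightarrow{f}Y)$; this gives $F_{X,Y}^f(a)=\Hom_{\Coalg}(a,a_\rep)$ with $a_\rep=C_\CE(\Def(X\xrightarrow{f}Y),\k)$, and naturality in $a$ is automatic. This last isomorphism is the classical Koszul duality for the Lie operad and is a direct instance of Proposition \ref{propmainconj} taken with $\mathcal{O}=\mathsf{Lie}$: there $\Bar_{\mathsf{Lie}}(\mathfrak{g})=C_\CE(\mathfrak{g},\k)$, the cooperad $\mathsf{Lie}^\ash=\mathsf{Comm}^*\{-1\}$ is finite and biaugmented, and its pro-conilpotent coalgebras are precisely the (shifted) objects of $\Coalg$, so $C_\CE(\mathfrak{g},\k)$ is the cofree cocommutative coalgebra cogenerated by $\Def(X\xrightarrow{f}Y)[1]$ equipped with the Chevalley--Eilenberg differential.
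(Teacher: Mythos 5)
Your proposal is correct and takes essentially the same route as the paper's own proof: the paper likewise splits $\theta=\varepsilon_*(\Bar(f)_\pr)+\xi$ with $\xi\in\Ker(\eta_*)=\Hom_\k(a/\k,\,\cdot\,)$, observes that the Maurer--Cartan equation on $\theta$ becomes the Maurer--Cartan equation on $\xi$ in the $d_f$-twisted convolution dg Lie algebra $\Hom_\k(a/\k,\Def(X\xrightarrow{f}Y))$, and concludes by the adjunction of Proposition \ref{propmainconj}. The only (cosmetic) difference is that you phrase the final Koszul-duality step with $\mathcal{O}=\mathsf{Lie}$ and absorb the shift into the coalgebras, whereas the paper's Chevalley--Eilenberg complex is the bar construction over $\mathsf{Lie}\{1\}$ with cooperad $\mathsf{Comm}^*$.
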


\begin{proof}
(Cf. [T2, Prop. 3.2]). By \eqref{funf4}, one has
\begin{equation}\label{funf7}
F^f_{X,Y}(a)=\Big\{ \theta\in \MC\big(\Hom_\k(a,\Hom_\Sigma(\Bar_n(X)/\k,Y\{n\})[-1])\big), \theta\circ \eta=\Bar(f)_\pr\Big\}
\end{equation}
Denote
\begin{equation}
\begin{aligned}
\ &L_0=\big(\Hom_\Sigma(\Bar_n(X)/\k,Y\{n\})[-1],\ d=d_0+d_\Bar)\\
&L_a=\big(\Hom_\k(a,\Hom_\Sigma(\Bar_n(X)/\k,Y\{n\})[-1]),\ d=d_0+d_a+d_\Bar\big)
\end{aligned}
\end{equation}
where $d_a$ is the component coming from the differential on the dg coalgebra $a$.

There are maps of coalgebras 
\begin{equation}\label{funf8}
\k\xrightarrow{\eta} a\xrightarrow{\varepsilon}\k,\ \ \varepsilon\circ\eta=\id
\end{equation}
where $\varepsilon$ is the counit, and $\eta$ is the coaugmentation. 

They induce maps of dg Lie algebras:
\begin{equation}\label{funf9}
L_0\xrightarrow{\varepsilon_*}L_a
\xrightarrow{\eta_*}L_0
\end{equation}
such that $\eta_*\circ\varepsilon_*=\id$. 

By \eqref{funf7}, we are interested in 
\begin{equation}\label{funf10}
F_{X,Y}^f(a)=\{\theta\in \MC(L_a),\ \eta_*(\theta)=\Bar(f)_\pr\}
\end{equation}
where $\Bar(f)_\pr$ is considered as a solution of the Maurer-Cartan equation in $L_0$.

We identify this MC solution in $L_0$ with $\theta_0=\varepsilon_*(\Bar(f)_\pr)$, regarded as a Maurer-Cartan element in $L_a$. 

Then any Maurer-Cartan element in \eqref{funf10} is of form
\begin{equation}\label{funf11}
\theta=\theta_0+\xi
\end{equation}
where
\begin{equation}\label{funf11}
\xi\in \Ker(\eta_*)=\Hom_\k(a/\k,\Hom_\Sigma(\Bar_n(X),Y\{n\})[-1])=L_a^\prime
\end{equation} 
We want to rewrite the Maurer-Cartan equation on $\theta\in L_a$ as some equation on $\xi\in L_a^\prime$.

The Maurer-Cartan equation on $\theta$ is 
\begin{equation}
d\theta+\frac12[\theta,\theta]=0\ \Leftrightarrow d_1\xi+[\Bar(f)_\pr,\xi]+\frac12[\xi,\xi]=0
\end{equation}
where $d_1=d_0+d_a+d_\Bar$.

The conclusion is that the Maurer-Cartan equation on $\theta$ is the same that the Maurer-Cartan equation on $\xi$ in the dg Lie algebra
\begin{equation}
L_a^{\prime\prime}=\big(\Hom_\k(a/\k,\Hom_\Sigma(\Bar_n(X),Y\{n\})[-1]),\ d=d_0+d_a+d_\Bar+\ad(\Bar(f)_\pr)\big)
\end{equation}
Note that, as a dg Lie algebra,
\begin{equation}
L_a^{\prime\prime}=\Hom_\k(a/\k,\Def(X\xrightarrow{f}Y))
\end{equation}
Then the adjunction in Proposition \ref{propmainconj} gives
\begin{equation}
F_{X,Y}^f(a)=\{\xi\in \MC(\Hom_\k(a/\k,\Def(X\xrightarrow{f}Y)))\}=\Hom_{\Coalg(\mathsf{Comm})}(a,{C}_\CE(\Def(X\xrightarrow{f}Y),\k))
\end{equation}
\end{proof}

\subsubsection{\sc }\label{defcomm13}
Now turn back to our original definition of the functor $F_{X,Y}^f$ as \eqref{funf}. It follows immediately that, for a chain of maps of $\mathsf{e}_n$-algebras in $\Vect(\k)_\Sigma$
\begin{equation}
X\xrightarrow{f}Y\xrightarrow{g}Z
\end{equation}
there is a map of sets
\begin{equation}
F^g_{Y,Z}(a^\prime) \times F_{X,Y}^f(a)\to F^{gf}_{X,Z}(a\otimes a^\prime)
\end{equation}
what gives rise to a map of bifunctors $\Coalg\times \Coalg\to\Sets$:
\begin{equation}
F^g_{Y,Z}(-_1)\times F^f_{X,Y}(-_2)\to F^{gf}_{X,Z}\circ \bigotimes(-_1,-_2)
\end{equation}
Plugging in the representative objects for $-_1$ and $-_2$ and the identity maps, one gets a map of dg cocommutative coalgebras:
\begin{equation}\label{transition}
T_{f,g}\colon{C}_\CE(\Def(Y\xrightarrow{g}Z),\k)\bigotimes{C}_\CE(\Def(X\xrightarrow{f}Y),\k)\to{C}_\CE(\Def(X\xrightarrow{gf}Z),\k)
\end{equation}

These maps of dg cocommutative coalgebras are associative:
\begin{equation}
T_{gf,h}\circ (\id\otimes T_{f,g})=T_{f,hg}\circ (T_{g,h}\otimes \id)
\end{equation}

\subsubsection{\sc }\label{defcomm14}
Plug $X=Y=Z$, $f=g=\id$ to \eqref{transition}. 

It follows that $A={C}_\CE(\Def(X\xrightarrow{\id}X),\k)$ is a monoid object in the category cocommutative dg coalgebras. That is,
it is a cocommutative dg bialgebra. 

It follows from the Milnor-Moore theorem [Q, Appendix B] that, as a graded Hopf algebra,  $A=\mathscr{U}(\mathfrak{g})$, where $\mathfrak{g}$ is the Lie algebra of primitive elements. In our case,
\begin{equation}
\mathfrak{g}=\Def(X\xrightarrow{\id}X)[1]
\end{equation}
Thus, the product \eqref{transition} gives {\it some} Lie algebra structure on 
\begin{equation}
\Def(X\xrightarrow{\id}X)[1]=\prod_{\ell\ge 1}\Hom_{\Sigma_\ell}((\mathsf{e}_n^*\circ X\{n\})(\ell),X\{n\}(\ell))
\end{equation}
It is a Gerstenhaber-like Lie bracket, which has been obtained from the convolution Lie bracket.

\begin{lemma}\label{lemmazero1}
The Lie bracket on $\Def(X\xrightarrow{\id}X)$, defined in Section \ref{defcomm12}, is equal to 0. 
\end{lemma}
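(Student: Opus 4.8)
\noindent\textit{Proof proposal.} The plan is to reproduce the Milnor--Moore argument of [T2, Section 5] in the present framework, reducing the vanishing of the bracket to a short computation of the quadratic corestriction of a biderivation on a cofree cocommutative bialgebra. First I would record the structure produced in Section~\ref{defcomm14}: the transition maps $(\ref{transition})$ for $f=g=\id$ make $A:=C_\CE(\Def(X\xrightarrow{\id}X),\k)$ a cocommutative dg bialgebra, whose underlying graded coalgebra is the cofree conilpotent cocommutative coalgebra cogenerated by $\mathfrak{g}:=\Def(X\xrightarrow{\id}X)[1]$. Forgetting the differential and invoking the Milnor--Moore theorem [Q, Appendix B] (where $\cchar\k=0$ is essential) identifies $A\cong\mathscr{U}(\mathfrak{g})$ as graded Hopf algebras, with $\mathfrak{g}=\mathrm{Prim}(A)$ carrying the Gerstenhaber-like bracket $[a,b]_{\mathrm{new}}=a*b-(-1)^{|a||b|}b*a$, where $*$ denotes the bialgebra product.

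The key observation I would use is that the Chevalley--Eilenberg differential $d_\CE$ is at once a coderivation and a derivation of $A$. As a coderivation of the cofree coalgebra it is determined by its corestriction $q=p\circ d_\CE\colon A\to\mathfrak{g}$ onto the cogenerators, and by the definition of $C_\CE$ this corestriction has only a linear part $q_1$ (the internal differential) and a quadratic part $q_2\colon\mathrm{Sym}^2(\mathfrak g)\to\mathfrak g$, the latter being, up to the shift, exactly the bracket of Section~\ref{defcomm12} whose vanishing is claimed, while $q_{\ge 3}=0$. Since $d_\CE$ commutes with the coproduct it preserves primitives, and since its word-length--lowering part kills $\mathrm{Sym}^1=\mathfrak g$, we get $D:=d_\CE|_{\mathfrak g}=q_1$, so that $(\mathfrak g,[-,-]_{\mathrm{new}},D)$ is a dg Lie algebra and, in particular, $D$ is a derivation of $[-,-]_{\mathrm{new}}$. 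The Leibniz rule for $*$ holds because $A$ is a dg bialgebra.

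The computation is then immediate. Under the symmetrization identification $\mathscr{U}(\mathfrak g)\cong\mathrm{Sym}(\mathfrak g)$ one has, for $g_1,g_2\in\mathfrak g$,
\[
g_1*g_2=g_1\cdot g_2+\tfrac12[g_1,g_2]_{\mathrm{new}},\qquad g_1\cdot g_2\in\mathrm{Sym}^2(\mathfrak g).
\]
I would evaluate $p\circ d_\CE$ on $g_1*g_2$ in two ways. The Leibniz rule together with $D(\mathfrak g)\subseteq\mathfrak g$ gives
\[
p\,d_\CE(g_1*g_2)=\tfrac12[Dg_1,g_2]_{\mathrm{new}}+(-1)^{|g_1|}\tfrac12[g_1,Dg_2]_{\mathrm{new}}=\tfrac12 D[g_1,g_2]_{\mathrm{new}},
\]
the last step because $D$ is a derivation of $[-,-]_{\mathrm{new}}$; on the other hand the decomposition above gives
\[
p\,d_\CE(g_1*g_2)=q_2(g_1,g_2)+\tfrac12 D[g_1,g_2]_{\mathrm{new}}.
\]
Comparing the two expressions yields $q_2(g_1,g_2)=0$. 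As $q_2$ is the quadratic corestriction, it vanishes identically, and with it the Lie bracket of Section~\ref{defcomm12} on $\Def(X\xrightarrow{\id}X)$.

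I expect the only genuine subtlety to be the symmetrization correction $p(g_1*g_2)=\tfrac12[g_1,g_2]_{\mathrm{new}}$, together with the attendant Koszul-sign bookkeeping: one must verify that the word-length--lowering piece of $d_\CE$ interacts correctly with the non-primitive part of the product, and that $D$ acts as a genuine derivation of $[-,-]_{\mathrm{new}}$ (equivalently, that $\mathfrak g$ is a dg Lie subalgebra of the dg Hopf algebra $A$). All remaining steps are formal manipulations of (co)derivations on a cofree cocommutative coalgebra.
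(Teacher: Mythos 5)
Your proposal takes the same route as the paper's own proof: apply Milnor--Moore to the cocommutative dg bialgebra $A=C_\CE(\Def(X\xrightarrow{\id}X),\k)$, observe that the quadratic corestriction $q_2$ of $d_\CE$ (which is, up to shift, the convolution bracket) kills the cogenerators, and use the Leibniz rule for $*$ to propagate this over all of $A$. Your version is in fact more explicit than the paper's two-line argument, since you carry out the coderivation/corestriction bookkeeping.

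However, the ``subtlety'' you flag at the end is not mere sign bookkeeping: it is precisely the point on which your conclusion (and, silently, the paper's) rests, and as written your computation does not quite reach the strict vanishing asserted by the lemma. The identity $p(g_1*g_2)=\tfrac12[g_1,g_2]_{\mathrm{new}}$ is true for the PBW symmetrization identification $\mathscr{U}(\mathfrak g)\cong\mathrm{Sym}(\mathfrak g)$, but the splitting $A=\mathrm{Sym}(\mathfrak g)$ relevant here is the one fixed by the construction of $C_\CE$, and the two identifications may differ by a coalgebra automorphism which is the identity on $\mathfrak g$. What the coproduct forces is only $g_1*g_2=g_1\cdot g_2+\beta(g_1,g_2)$ with $S^2$-part exactly $g_1\cdot g_2$ and $\beta:=p\circ(*)|_{\mathfrak g\otimes\mathfrak g}$ a priori arbitrary; the antisymmetric part of $\beta$ is $\tfrac12[-,-]_{\mathrm{new}}$, but the symmetric part $P'$ need not vanish for an abstract dg bialgebra of this shape. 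Running your two evaluations of $p\,d_\CE(g_1*g_2)$ with this general $\beta$ gives
\[
q_2(g_1\cdot g_2)=\beta(Dg_1,g_2)+(-1)^{|g_1|}\beta(g_1,Dg_2)-D\beta(g_1,g_2),
\]
and since $D$ is a derivation of the commutator, the antisymmetric contribution cancels, leaving $q_2=[D,P']$. So your argument, unpatched, proves that the convolution bracket is the $D$-coboundary of a symmetric bilinear map --- homotopy triviality --- whereas the lemma claims it is zero on the nose; the missing input is that $P'=0$ (or at least that $P'$ is a chain map), i.e.\ that the symmetrized product of two primitives has no primitive component in the CE splitting. The paper's proof (``the CE differential vanishes on $\mathfrak g$, hence on every $S^k$ by the Leibniz rule'') implicitly assumes exactly this compatibility of the Milnor--Moore/PBW identification with the given cofree decomposition, so you are in good company; but to close your proof one should either verify $P'=0$ for the concrete product $T_{\id,\id}$ of (\ref{transition}), or settle for the homotopy-vanishing statement that your manipulation actually establishes.
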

\begin{proof}
We know that ${C}_\CE(\Def(X\xrightarrow{\id}X,\k)$ is a dg Hopf algebra. It is of the form $\mathscr{U}(\mathfrak{g})$, where $\mathfrak{g}=\Def(X\xrightarrow{\id}X)[1]$ is the space of primitive elements. 
The Chevalley-Eilenberg chain differential is equal to 0 on $\mathfrak{g}$. As a conclusion, we get that the Chevalley-Eilenberg differential on any power $S^k(\Def(X\xrightarrow{\id}X)[1])$ is 0. It is equivalent to saying that $\Def(X\xrightarrow{\id}X)$ is an abelian Lie algebra.

\end{proof}

\subsubsection{\sc}

\begin{lemma}
The Lie bracket on $\Def(X\xrightarrow{\id}X)[1]$ given  by the Milnor-Moore theorem is identified with the bracket given by the commutator of coderivativations  of the cofree $\mathsf{e}_n^*$-coalgebra $\Bar_n(X\{n\})$.
\end{lemma}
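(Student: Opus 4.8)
The plan is to realize the monoid structure on $A = C_\CE(\Def(X\xrightarrow{\id}X),\k)$ as \emph{literal composition of coalgebra endomorphisms} of $\Bar_n(X\{n\})$, and then to read off the Milnor--Moore bracket on primitives as the commutator of the corresponding infinitesimal endomorphisms, i.e.\ coderivations. First I would fix the identification of the underlying space. By the universal property of the cofree $\mathsf{e}_n^*$-coalgebra $\Bar_n(X\{n\})=\mathsf{e}_n^*\circ X\{n\}$, a coderivation is determined by its corestriction $\pi$ to the cogenerators $X\{n\}$, so
$$\Coder(\Bar_n(X\{n\}))=\Hom_\Sigma(\Bar_n(X)/\k,\,X\{n\})=\Def(X\xrightarrow{\id}X)[1],$$
and coderivations carry the graded commutator $[D_1,D_2]=D_1D_2-(-1)^{|D_1||D_2|}D_2D_1$ (the composite $D_1D_2$ is not a coderivation, but the commutator is). This is the bracket I must match.

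To pin down the correspondence between primitives and coderivations functorially, I would evaluate $F^{\id}_{X,X}$ on the dual-numbers coalgebra $D=\k\oplus\k\epsilon$ with $\epsilon$ primitive. Since $D\star\Bar_n(X)=\Bar_n(X)\oplus\epsilon\cdot\Bar_n(X)$, a pointed element of $F^{\id}_{X,X}(D)$ is a coalgebra endomorphism $\id+\epsilon P$; expanding the condition that it respects the $\mathsf{e}_n^*$-cooperations modulo $\epsilon^2$ shows that this holds exactly when $P$ is a coderivation. Thus $\operatorname{Prim}(A)=F^{\id}_{X,X}(D)_{\mathrm{pointed}}=\Coder(\Bar_n(X\{n\}))$, and a primitive $p$ corresponds precisely to its coderivation $P$.

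Next I would compute the product. Recall from Section~\ref{defcomm13} that $T_{\id,\id}$ is induced by composing endomorphisms along $(D'\otimes D)\star\Bar_n(X)=D'\star(D\star\Bar_n(X))\xrightarrow{\id\star\phi}D'\star\Bar_n(X)\xrightarrow{\phi'}\Bar_n(X)$, with $D'=\k\oplus\k\epsilon'$. Feeding in $\phi=\id+\epsilon P$ and $\phi'=\id+\epsilon'Q$ gives
$$\phi'\circ(\id\star\phi)=\id+\epsilon P+\epsilon'Q+\epsilon'\epsilon\,QP.$$
Corestricting to the cogenerators $X\{n\}$ and invoking representability (Proposition~\ref{propreprbasic}), the associated coalgebra map $D'\otimes D\to A$ sends $\epsilon'\epsilon\mapsto q*p$, where $*$ is the bialgebra product; and the primitive part of $q*p$ is the coderivation coextending the corestriction $\pi\circ QP$ of the composite.

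Finally I would conclude. Although $QP$ is itself not a coderivation, the graded commutator $[Q,P]=QP-(-1)^{|P||Q|}PQ$ is one, so it coextends to itself; hence the primitive part of the Milnor--Moore commutator $[q,p]=q*p-(-1)^{|p||q|}p*q$ — which is again primitive — is exactly $[Q,P]$, while the decomposable ($S^2$) contributions carry no primitive content and cancel. This identifies the Milnor--Moore bracket with the commutator of coderivations. The hard part will be precisely this last reconciliation: verifying that the failure of $QP$ to be a coderivation is \emph{decomposable} and is annihilated upon antisymmetrization, together with the sign bookkeeping threaded through the operadic shift $-\{n\}$ and the degree shifts $[\pm1]$.
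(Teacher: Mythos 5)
Your proposal is correct, and in fact it supplies an argument where the paper gives none: the lemma is stated with a bare \qed{} (and its $\mathcal{Q}$-relative analogue, Lemma \ref{lemmacompcomm}, is likewise dismissed with ``It is a direct check''), so your dual-numbers computation is precisely the verification the author leaves to the reader. Two refinements are worth making. First, $F^{\id}_{X,X}$ is a functor on \emph{dg} coalgebras, so probing with $D=\k\oplus\k\epsilon$ carrying zero differential detects only \emph{closed} coderivations ($\id+\epsilon P$ is a chain map iff $P$ commutes with $d_0+d_\Bar$); since the Milnor--Moore theorem is applied to the underlying \emph{graded} Hopf algebra, you should run the whole argument after forgetting differentials, with $\epsilon$ of arbitrary degree --- legitimate because Proposition \ref{propreprbasic} and the adjunctions it rests on are compatible with forgetting $d$. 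Second, your announced ``hard part'' dissolves: your $\epsilon'\epsilon$-coefficient computation shows that the cogenerator projection $\pi_{\mathfrak{g}}\colon A\to\mathfrak{g}$ of the cofree conilpotent cocommutative coalgebra $A=C_\CE(\Def(X\xrightarrow{\id}X),\k)$ satisfies $\pi_{\mathfrak{g}}(q*p)=\pi\circ QP$ (the adjunction of Proposition \ref{propmainconj} identifies a coalgebra map $a\to A$ with the corestriction to cogenerators of the corresponding $\phi$), and from there no cancellation analysis of decomposables is needed: the commutator $[q,p]$ of primitives is primitive, $\pi_{\mathfrak{g}}$ restricts to the identity on primitives (for a cofree conilpotent cocommutative coalgebra $\mathrm{Prim}(A)=S^1=\mathfrak{g}$), and $[Q,P]$ is the unique coderivation with corestriction $\pi\circ[Q,P]$; hence $[q,p]\leftrightarrow[Q,P]$ immediately. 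Relatedly, ``the primitive part of $q*p$'' is loose phrasing --- $q*p$ is not primitive, and what your computation identifies is its $S^1$-component. With these adjustments the remaining sign bookkeeping through the shifts $\{n\}$ and $[\pm 1]$ is routine, and your proof is complete.
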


\qed

\subsection{\sc The case $\mathcal{P}=[X,X]_\mathcal{Q}$}\label{sectionrel}

\subsubsection{\sc }
Recall some useful adjunction.

Let $\mathcal{Q}$ be an operad, $X,Y\in\Vect(\k)$  right $\mathcal{Q}$-modules, $S\in \Vect(\k)_\Sigma$. One has:
\begin{equation}\label{adjprima}
\Hom_{\Sigma, \Mod{-}\mathcal{Q}}(S\circ X,Y)=\Hom_{\Sigma}(S, [X,Y]_\mathcal{Q})
\end{equation}
See Section \ref{sectioncatalg} for the definition of $[X,Y]_\mathcal{Q}\in\Vect(\k)_\Sigma$.

\subsubsection{\sc }\label{sectionmaprel}
Let $\mathcal{Q}$ be an operad, $\mathcal{O}$ a Koszul operad, $X$ a symmetric sequence in $\Vect(\k)$, with a $\mathcal{O}{-}\mathcal{Q}$-bimodule structure:
\begin{equation}
m\colon \mathcal{O}\circ X\circ \mathcal{Q}\to X
\end{equation}
Recall that a $\mathcal{O}{-}\mathcal{Q}$-bimodule structure on a symmetric sequence $X$ gives rise to an operad map:
\begin{equation}
\mathcal{O}\to [X,X]_\mathcal{Q}
\end{equation}
See Section \ref{sectioncatalg} for detail.

\subsubsection{\sc }
Let $X$ be an $\mathcal{O}{-}\mathcal{Q}$-bimodule, as above.

Consider the bar-complex $$\Bar_\mathcal{O}(X)=(\mathcal{O}^\ash\circ X,d_\Bar)$$
The underlying symmetric sequence $\mathcal{O}^\ash\circ X$ is clearly a right $\mathcal{Q}$-module. 
It is shown in Section \ref{sectionbarrel} that $\Bar_{\mathcal{O}}(X)$ becomes an object of the category $\mathscr{C}(\mathcal{O}^\ash,\mathcal{Q})$, defined there.

The shifted symmetric sequence $X\{n\}$ becomes an $\mathcal{O}\{n\}{-}\mathcal{Q}\{n\}$-bimodule.

Consider the case $\mathcal{O}=\mathsf{e}_n$. 

Then the complex 
$$
\Bar_n(X)=(\mathsf{e}_n^*\circ (X\{n\}),d_\Bar)
$$
is an object of the category $\mathscr{C}(\mathsf{e}_n^*,\mathcal{Q}\{n\})$.

Let $f\colon X\to Y$ be a map of $\mathsf{e}_n{-}\mathcal{Q}$-bimodules, $X,Y$ symmetric sequences. We define 
a functor
\begin{equation}
_\mathcal{Q}F_{X,Y}^f\colon\Coalg\to\Sets
\end{equation}
as follows:
\begin{equation}\label{fqoriginal}
_\mathcal{Q}F_{X,Y}^f(a)=\big\{\phi\in\Hom_{\mathscr{C}(\mathsf{e}_n^*,\mathcal{Q}\{n\})}(a\star \Bar_n(X),\Bar_n(Y)),\ \phi\circ \eta=\Bar(f)\big\}
\end{equation}

We use Proposition \ref{baradjrel}. It gives:

\begin{equation}
_\mathcal{Q}F_{X,Y}^f(a)=\big\{\theta\in \MC(\Hom_{\Sigma, \Mod{-}\mathcal{Q}\{n\}}(a\star \Bar_n(X)/\k,Y\{n\})[-1]),\ \theta\circ \eta=\Bar(f)_\pr\big\}
\end{equation}

One further has:
\begin{equation}
\begin{aligned}
_\mathcal{Q}F_{X,Y}^f(a)=&\big\{\theta\in \MC(\Hom_\k(a,\Hom_{\Sigma,\Mod{-}\mathcal{Q}\{n\}}(\Bar_n(X)/\k,Y\{n\})[-1]),\ \theta\circ \eta=\Bar(f)_\pr\big\}\\
=&\big\{\theta\in \MC(\Hom_\k(a,\Hom_\Sigma(\mathsf{e}_n^*,[X\{n\},Y\{n\}]_{\mathcal{Q}\{n\}}))), \theta\circ\eta=\Bar(f)_\pr^\sim\big\}
\end{aligned}
\end{equation}
where by $\Bar(f)^\sim_\pr$ is denoted the element corresponded to $\Bar(f)_\pr$ by the adjunction \eqref{adjprima}. 

\subsubsection{\sc}\label{subsubsectionlierel}
Here we endow $\Hom_\Sigma(\mathsf{e}_n^*, [X\{n\},Y\{n\}]_{\mathcal{Q}\{n\}})[-1]$ with a dg Lie algebra structure.

Consider the symmetric sequence
\begin{equation}
D=\Hom_\lev(\mathsf{e}_n^*,[X\{n\},Y\{n\}]_{\mathcal{Q}\{n\}})
\end{equation}
The graded vector space $\Hom_\Sigma(\mathsf{e}_n^*, [X\{n\},Y\{n\}]_{\mathcal{Q}\{n\}})$ is equal to ${D}_\Sigma$.

On the other hand, the operad $[Y\{n\},Y\{n\}]_{\mathcal{Q}\{n\}}$ acts on $[X\{n\},Y\{n\}]_{\mathcal{Q}\{n\}}$ from the left:
\begin{equation}
[Y\{n\},Y\{n\}]_{\mathcal{Q}\{n\}}\circ [X\{n\},Y\{n\}]_{\mathcal{Q}\{n\}}\to [X\{n\},Y\{n\}]_{\mathcal{Q}\{n\}}
\end{equation}
see Section \ref{sectioncatalg}.

Therefore, the convolution operad
\begin{equation}
\Hom_\Op(\mathsf{e}_n^*,[Y\{n\},Y\{n\}]_{\mathcal{Q}\{n\}})
\end{equation}
acts on $D$, by Lemma \ref{lca}. Therefore, the same operad acts on the graded space ${D}_\Sigma=\Hom_\Sigma(\mathsf{e}_n^*,[X\{n\},Y\{n\}]_{\mathcal{Q}\{n\}})$.

There is a map of operads $\mathsf{e}_n\{n\}\to[Y\{n\},Y\{n\}]_{\mathcal{Q}\{n\}}$.
Therefore, $D$ and ${D}_\Sigma$ become algebras over the operad $\Hom_\Op(\mathsf{e}_n^*,\mathsf{e}_n\{n\})$.

Now we proceed as in Section \ref{defcomm12}.
There is a map of operads $\mathsf{Lie}\{1\}\to \Hom_\Op(\mathsf{e}_n^*,\mathsf{e}_n\{n\})$, by Lemma \ref{liekoszul}.
Therefore, the graded space
\begin{equation}\label{lierel}
\Hom_\Sigma(\mathsf{e}_n^*,[X\{n\},Y\{n\}]_{\mathcal{Q}\{n\}})[-1]
\end{equation}
is a graded Lie algebra. One easily checks that the differentials $d_0$ and $d_\Bar$ differentiate this Lie algebra structure.

Denote
\begin{equation}\label{def0q}
\Def_0(X,Y)_\mathcal{Q}:=(\Hom_\Sigma(\mathsf{e}_n^*,[X\{n\},Y\{n\}]_{\mathcal{Q}\{n\}})[-1], d=d_0+d_\Bar^\sim)
\end{equation}
We claim that the map $f\colon X\to Y$ of $\mathsf{e}_n{-}\mathcal{Q}$-bimodules defines a degree 1 element in $\Def_0(X,Y)_\mathcal{Q}$, satisfying the Maurer-Cartan equation. 

Indeed, $f$ defines a map of shifted bar-complexes
$$
\Bar(f)\{n\}\colon \Bar_n(X)\to\Bar_n(Y)
$$
which is a map of $\mathsf{e}_n^*$-coalgebras and of right $\mathcal{Q}\{n\}$-modules. The map $\Bar(f)\{n\}_\pr$ is defined by the projection to the cogenerators $\Bar(f)\{n\}_\pr\colon \Bar_n(X)\to Y\{n\}$. It is a map of right $\mathcal{Q}\{n\}$-modules. Now the adjunction \eqref{adjprima} gives the corresponding degree 0 map in $\Hom_{\Sigma}(\mathsf{e}_n^*,[X\{n\},Y\{n\}]_{\mathcal{Q}\{n\}})$.
After the shift $[-1]$ it gives a degree 1 element in $\Def_0(X,Y)_\mathcal{Q}$. One checks that it satisfies the Maurer-Cartan equation.

Denote the obtained element by $\Bar(f)\{n\}_\pr^\sim$, and
$$
d_f=\ad(\Bar(f)\{n\}_\pr^\sim)
$$
One gets that
\begin{equation}\label{defcomplexalg}
\Def(X\xrightarrow{f}Y)_\mathcal{Q}:=\big(\Hom_\Sigma(\mathsf{e}_n^*, [X\{n\},Y\{n\}]_{\mathcal{Q}\{n\}})[-1], d=d_0+d_\Bar+d_f\big)
\end{equation}
is a dg Lie algebra. We call it the {\it $\mathcal{Q}$-relative deformation dg Lie algebra}.

Note that
\begin{equation}
[X\{n\},Y\{n\}]_{\mathcal{Q}\{n\}}=([X,Y]_\mathcal{Q})\{n\}
\end{equation}
Therefore, one has:
\begin{equation}
\Def(X\xrightarrow{f}Y)_\mathcal{Q}=\big(\Hom_\Sigma(\mathsf{e}_n^\ash,[X,Y]_\mathcal{Q})[-1], d=d_0+d_\Bar+d_f\Big)
\end{equation}

\subsubsection{\sc }
We have:
\begin{prop}
The functor $_\mathcal{Q}F_{X,Y}^f\colon \Coalg(\mathsf{Comm})\to\Sets$ is representable, by the dg coalgebra 
$a_\rep=C_\CE(\Def(X\xrightarrow{f}Y)_\mathcal{Q},\k)$:
\begin{equation}
_\mathcal{Q}F_{X,Y}^f(a)=\Hom_{\Sigma,\Coalg(\mathsf{Comm})}(a, C_\CE(\Def(X\xrightarrow{f}Y)_\mathcal{Q},\k))
\end{equation}
where $\Coalg(\mathsf{Comm})$ denotes the category of pro-conilpotent cocommutative coalgebras in $\Vect(\k)_\Sigma$, and $C_\CE(-)$ stands for the chain Chevalley-Eilenberg complex of a (dg) Lie algebra.
\end{prop}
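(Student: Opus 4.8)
The plan is to run, verbatim, the $\mathcal{Q}$-relative analogue of the proof of Proposition~\ref{propreprbasic}: the only change is that the absolute convolution dg Lie algebra used there is everywhere replaced by the $\mathcal{Q}$-relative one, $\Def_0(X,Y)_\mathcal{Q}=\Hom_\Sigma(\mathsf{e}_n^*,[X\{n\},Y\{n\}]_{\mathcal{Q}\{n\}})[-1]$, all of whose structure has already been established in Section~\ref{subsubsectionlierel} via Proposition~\ref{baradjrel}. By the reformulation obtained in the preceding subsection (just before Section~\ref{subsubsectionlierel}), together with the $\star$-adjunction \eqref{adjstar}, we have
\begin{equation}
_\mathcal{Q}F_{X,Y}^f(a)=\big\{\theta\in\MC(L_a^\mathcal{Q}),\ \eta_*(\theta)=\Bar(f)_\pr^\sim\big\},\qquad L_a^\mathcal{Q}=\Hom_\k\big(a,\Def_0(X,Y)_\mathcal{Q}\big),
\end{equation}
the differential on $L_a^\mathcal{Q}$ being $d_0+d_a+d_\Bar$, with $d_a$ induced by the differential on the dg coalgebra $a$.

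First I would use the counit $\varepsilon\colon a\to\k$ and the coaugmentation $\eta\colon\k\to a$. Since the convolution Lie bracket on $\Hom_\k(a,-)$ is natural in the cocommutative coalgebra $a$, these induce maps of dg Lie algebras
\begin{equation}
L_0^\mathcal{Q}\xrightarrow{\ \varepsilon_*\ }L_a^\mathcal{Q}\xrightarrow{\ \eta_*\ }L_0^\mathcal{Q},\qquad \eta_*\circ\varepsilon_*=\id,
\end{equation}
where $L_0^\mathcal{Q}=\Def_0(X,Y)_\mathcal{Q}$. Writing $\theta_0=\varepsilon_*(\Bar(f)_\pr^\sim)$ and decomposing every $\theta$ in the above set as $\theta=\theta_0+\xi$ with $\xi\in\Ker(\eta_*)=\Hom_\k(a/\k,\Def_0(X,Y)_\mathcal{Q})$, the Maurer--Cartan equation $d\theta+\tfrac12[\theta,\theta]=0$ turns into
\begin{equation}
(d_0+d_a+d_\Bar)\xi+[\Bar(f)_\pr^\sim,\xi]+\tfrac12[\xi,\xi]=0.
\end{equation}
This is precisely the Maurer--Cartan equation for $\xi$ in the twisted dg Lie algebra $\Hom_\k(a/\k,\Def(X\xrightarrow{f}Y)_\mathcal{Q})$, because twisting by $\ad(\Bar(f)_\pr^\sim)$ produces exactly the extra summand $d_f$ in \eqref{defcomplexalg} and $\ad$ acts through the second argument of $\Hom_\k(a/\k,-)$.

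Finally I would conclude exactly as at the end of Proposition~\ref{propreprbasic}, invoking the Chevalley--Eilenberg representability of the Maurer--Cartan functor of a dg Lie algebra on pro-conilpotent cocommutative coalgebras (the relevant instance of Proposition~\ref{propmainconj}):
\begin{equation}
\MC\big(\Hom_\k(a/\k,\mathfrak{g})\big)=\Hom_{\Sigma,\Coalg(\mathsf{Comm})}\big(a,C_\CE(\mathfrak{g},\k)\big),
\end{equation}
applied to $\mathfrak{g}=\Def(X\xrightarrow{f}Y)_\mathcal{Q}$. Chaining the displays yields the asserted representability with $a_\rep=C_\CE(\Def(X\xrightarrow{f}Y)_\mathcal{Q},\k)$.

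Since all of the genuinely relative input has already been absorbed into the dg Lie algebra $\Def(X\xrightarrow{f}Y)_\mathcal{Q}$ through Proposition~\ref{baradjrel}, the proof is formally identical to the absolute case, so no new obstruction of substance arises. The main point deserving attention is bookkeeping: checking that the decomposition $\theta=\theta_0+\xi$ really converts the Maurer--Cartan equation into the $d_f$-twisted one, and that $\varepsilon_*,\eta_*$ and the bracket respect the right $\mathcal{Q}\{n\}$-module constraints cutting out $[X\{n\},Y\{n\}]_{\mathcal{Q}\{n\}}$. The latter is automatic because $\Hom_\k(a,-)$, being a right adjoint, preserves the equalizer defining the relative inner Hom, so the entire splitting--twisting argument goes through unchanged.
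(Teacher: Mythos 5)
Your proposal is correct and is exactly the argument the paper intends: its own proof is a one-line reference saying the argument is analogous to the absolute case (Proposition~\ref{propreprbasic}), with Proposition~\ref{baradjrel} used in place of Proposition~\ref{propmainconj}, and your writeup simply fills in those omitted details --- the relative MC reformulation, the splitting $\theta=\theta_0+\xi$ along $\varepsilon_*,\eta_*$, the twist producing $d_f$, and the final Chevalley--Eilenberg representability. Nothing in your route differs in substance from the paper's.
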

\begin{proof}
The proof is analogous to the proof of Proposition \ref{baradjrel}, and we omit the detail. The only difference is that we use Proposition \ref{baradjrel} here, instead of Proposition \ref{propmainconj}.
\end{proof}

\subsubsection{\sc}
Turn back to the definition of the functor $_\mathcal{Q}F_{X,Y}^f$, given in \eqref{fqoriginal}. It follows from this definition that, for a chain of maps of $\mathsf{e}_n{-}\mathcal{Q}$-bimodules in $\Vect(\k)_\Sigma$
$$
X\xrightarrow{f}Y\xrightarrow{g}Z
$$
one gets a map of sets
\begin{equation}\label{compq}
_\mathcal{Q}F_{Y,Z}^g(a^\prime)\times_\mathcal{Q}F_{X,Y}^f(a)\to _\mathcal{Q}F_{X,Z}^{gf}(a\otimes a^\prime)
\end{equation}
which gives rise to a map of bifunctors $\Coalg\times \Coalg\to\Sets$:
\begin{equation}
_\mathcal{Q}F^g_{Y,Z}(-_2)\times _\mathcal{Q}F^f_{X,Y}(-_1)\to _\mathcal{Q}F_{X,Z}^{gf}\circ \bigotimes(-_1,-_2)
\end{equation}
Denote by $a^f_\rep$ the representing coalgebra for the functor $_\mathcal{Q}F^f_{X,Y}(-)$. Then \eqref{compq} gives a map:
\begin{equation}\label{compq2}
a^f_\rep\otimes a^g_\rep\to a^{gf}_\rep
\end{equation}
of dg coalgebras,
which enjoys the natural associativity for a chain of four maps $X\to Y\to Z\to W$.

We know that 
$$
a_\rep^f=C_\CE(\Def(X\xrightarrow{f}Y)_\mathcal{Q},\k)
$$

Consider the case $X=Y$, $f=\id_X$. Then \eqref{compq2} gives a dg bialgebra structure on $$a_\rep^{\id}(X)=C_\CE(\Def(X\xrightarrow{\id}X)_\mathcal{Q},\k)$$
\begin{prop}
Let $X$ be a $\mathsf{e}_n{-}\mathcal{Q}$-bimodule. Then $$a_\rep^{\id}(X)=C_\CE(\Def(X\xrightarrow{\id}X)_\mathcal{Q},\k)$$ is a cocommutative dg bialgebra.
\end{prop}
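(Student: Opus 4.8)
The plan is to deduce the statement from the associative composition product \eqref{compq2}, specialised to the identity endomorphism, exactly as in the non-relative situation of Section \ref{defcomm14}.

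First I would put $X=Y=Z$ and $f=g=\id_X$ in the composition of functors \eqref{compq}. Inserting the representing coalgebras together with the identity maps yields, as the corresponding instance of \eqref{compq2}, a morphism of dg coalgebras
\[
m\colon a_\rep^{\id}(X)\otimes a_\rep^{\id}(X)\to a_\rep^{\id}(X).
\]
Since $a_\rep^{\id}(X)=C_\CE(\Def(X\xrightarrow{\id}X)_\mathcal{Q},\k)$ is by construction an object of $\Coalg$, the map $m$ is a morphism of pro-conilpotent cocommutative coalgebras; hence coassociativity and cocommutativity of the coproduct are inherited, and the compatibility between the product $m$ and the coproduct (the bialgebra axiom) holds precisely because $m$ respects the coalgebra structure. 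The associativity of $m$ is the instance $X=Y=Z=W=X$, $f=g=h=\id_X$ of the associativity of \eqref{compq2} for a chain of four morphisms, which is recorded right after that display.

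It remains to supply a unit and to check the unit axioms. I would take for the unit the coaugmentation $u\colon\k\to a_\rep^{\id}(X)$ of the representing coalgebra; it corresponds under representability to the unique element of ${}_\mathcal{Q}F_{X,X}^{\id}(\k)$, namely the base point $\Bar(\id_X)\{n\}$ (at $a=\k$ the constraint $\phi\circ\eta=\Bar(\id_X)\{n\}$ forces $\phi=\Bar(\id_X)\{n\}$, so this functor value is a single point). The unit axioms then follow from the behaviour of the composition maps \eqref{compq} on this base point: composing an arbitrary deformation over a base $a$ with the trivial deformation over $a=\k$ returns the original deformation, so that the composite
\[
{}_\mathcal{Q}F_{X,X}^{\id}(\k)\times{}_\mathcal{Q}F_{X,X}^{\id}(a)\to{}_\mathcal{Q}F_{X,X}^{\id}(\k\otimes a)={}_\mathcal{Q}F_{X,X}^{\id}(a)
\]
is the projection onto the second factor, and symmetrically on the other side. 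By the Yoneda lemma these translate into the left and right unit identities for $m$. Together these make $a_\rep^{\id}(X)$ a monoid in $(\Coalg,\otimes,\k)$, i.e.\ a cocommutative dg bialgebra.

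The only step demanding genuine (if routine) verification is the unit axiom, that is, the identification of the base point $\Bar(\id_X)\{n\}$ as a two-sided unit for the composition \eqref{compq}; everything else is a verbatim transcription of the argument in Section \ref{defcomm14}, where the identical conclusion was obtained for the case $\mathcal{P}=[X,X]$.
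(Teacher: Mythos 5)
Your proposal is correct and takes essentially the same route as the paper: the paper also obtains the product on $a_\rep^{\id}(X)$ by specialising the composition maps \eqref{compq}--\eqref{compq2} to $X=Y=Z$, $f=g=\id_X$, with associativity coming from the four-term chain and the bialgebra axiom from \eqref{compq2} being a morphism of dg cocommutative coalgebras. Your explicit check of the unit axiom---identifying the coaugmentation with the base point $\Bar(\id_X)\{n\}$, forced at $a=\k$ by the constraint $\phi\circ\eta=\Bar(f)$---is left implicit in the paper but is exactly the intended argument.
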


One can apply the Milnor-Moore theorem to this cocommutative bialgebra, which gives that, as a graded bialgebra,
\begin{equation}
C_{\CE}(\Def(X\xrightarrow{\id}X)_\mathcal{Q},\k)=\mathscr{U}(\mathfrak{g})
\end{equation}
where $$\mathfrak{g}=\Def(X\xrightarrow{\id}X)_\mathcal{Q}[1]
$$
is the space of primitive elements. It becomes a Lie algebra, with the bracket $[x,y]=x*y\mp y*x$.

One has
\begin{lemma}\label{lemmazero2}
The Lie bracket on $\Def(X\xrightarrow{\id}X)_\mathcal{Q}$ defined in Section \ref{subsubsectionlierel} is equal to 0.
\end{lemma}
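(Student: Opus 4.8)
This is the $\mathcal{Q}$-relative counterpart of Lemma \ref{lemmazero1}, and the plan is to transcribe the argument of that lemma with the relative deformation complex replacing the absolute one. Write $A=C_\CE(\Def(X\xrightarrow{\id}X)_\mathcal{Q},\k)$. By the preceding Proposition, $A$ is a cocommutative dg bialgebra, the product being the monoid structure \eqref{compq2} specialised to $X=Y=Z$ and $f=g=\id$. Forgetting the differential, the Milnor--Moore theorem identifies $A$ with the universal enveloping coalgebra $\mathscr{U}(\mathfrak{g})$, where $\mathfrak{g}=\Def(X\xrightarrow{\id}X)_\mathcal{Q}[1]$ is the graded space of primitives, exactly as in Section \ref{defcomm14}.

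The key is to track the two components of the Chevalley--Eilenberg differential $D$ on $A$. As a coderivation of the cofree cocommutative coalgebra $S(\mathfrak{g})$ underlying $A$, one has $D=d_0+d_2$, where $d_0$ preserves the symmetric degree and $d_2\colon S^k\to S^{k-1}$ is the corank-one part; by construction $d_2$ is exactly the coalgebra encoding of the Lie bracket on $\Def(X\xrightarrow{\id}X)_\mathcal{Q}$ defined in Section \ref{subsubsectionlierel}. Thus the lemma is equivalent to the vanishing $d_2=0$. On the primitives $\mathfrak{g}=S^1(\mathfrak{g})$ the component $d_2$ vanishes for trivial degree reasons, so $D|_{\mathfrak{g}}=d_0|_{\mathfrak{g}}$; and since $D$ is a coderivation it preserves $\mathfrak{g}$.

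Next I would use that, $A$ being a dg bialgebra, $D$ is also a derivation of the product $\ast$ of $A$. As $\mathfrak{g}$ generates $A=\mathscr{U}(\mathfrak{g})$ as an algebra, $D$ is the unique $\ast$-derivation extending the linear endomorphism $D|_{\mathfrak{g}}$ of $\mathfrak{g}$. Such a derivation extension preserves the Poincar\'e--Birkhoff--Witt filtration, and the induced map on $\gr A\cong S(\mathfrak{g})$ is the symmetric-algebra derivation generated by $D|_{\mathfrak{g}}$, which preserves the symmetric degree. Because the PBW filtration coincides with the coradical (primitive) filtration of the cocommutative coalgebra $A$, the differential $D$ strictly preserves this filtration, forcing its degree-lowering part $d_2$ to vanish on every $S^k$. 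Hence $d_2=0$ and the bracket of Section \ref{subsubsectionlierel} is trivial.

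The one delicate point --- which I expect to be the main obstacle --- is that the coalgebra grading, along which $d_2$ drops degree, is a priori unrelated to the product $\ast$, with respect to which $D$ is a derivation; the product $\ast$ of $\mathscr{U}(\mathfrak{g})$ is not the symmetric product of $S(\mathfrak{g})$, the two being intertwined only through the PBW isomorphism $\sigma\colon S(\mathfrak{g})\xrightarrow{\sim}\mathscr{U}(\mathfrak{g})$. One must therefore check that the $\ast$-derivation extension of $D|_{\mathfrak{g}}$, read back through $\sigma$, carries no symmetric-degree-lowering component. Granting this, everything else is formal. This is precisely where characteristic zero enters, through the validity of both PBW and Milnor--Moore, and the remainder of the proof is a verbatim repetition of Lemma \ref{lemmazero1}.
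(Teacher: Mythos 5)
Your strategy is exactly the paper's: the printed proof of this lemma is literally ``similar to Lemma \ref{lemmazero1}, and we omit the detail'', and your transcription of that argument to the $\mathcal{Q}$-relative bialgebra $A=C_\CE(\Def(X\xrightarrow{\id}X)_\mathcal{Q},\k)$ is the intended one --- nothing $\mathcal{Q}$-specific enters once the monoid structure \eqref{compq2} and the Milnor--Moore identification $A\cong\mathscr{U}(\mathfrak{g})$, $\mathfrak{g}=\Def(X\xrightarrow{\id}X)_\mathcal{Q}[1]$, are in place. However, the pivotal step of your second paragraph is a non sequitur: preservation of the coradical ($=$ PBW) filtration cannot force $d_2=0$, because $d_2$ itself preserves that filtration --- it maps $F_k$ into $F_{k-1}\subset F_k$. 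Filtration-preservation together with $\gr D=d_0$ is precisely the shape $D=d_0+d_2$ for an \emph{arbitrary} $d_2$, so no argument phrased purely in terms of the filtration can exclude the quadratic component.

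What must be controlled is the weight grading, not the filtration, and your closing paragraph diagnoses this correctly but then grants exactly the needed statement. It splits into two claims of unequal depth: (a) the PBW symmetrization $\sigma\colon S(\mathfrak{g})\to\mathscr{U}(\mathfrak{g})$ intertwines the unique $\ast$-derivation extending $D|_{\mathfrak{g}}$ with the weight-preserving symmetric-algebra derivation generated by $D|_{\mathfrak{g}}$ --- this one is harmless, being immediate from the Leibniz rule applied to $\frac{1}{k!}\sum_{\pi\in\Sigma_k}\pm\,x_{\pi(1)}\ast\cdots\ast x_{\pi(k)}$; but also (b) the Milnor--Moore isomorphism $\mathscr{U}(\mathfrak{g})\to A$, which is characterized only as the algebra map extending the inclusion of the primitives, must in addition respect the \emph{given} cofree weight grading on $A=S(\mathfrak{g})$ coming from the Chevalley--Eilenberg construction. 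A coalgebra isomorphism of $S(\mathfrak{g})$ which is the identity on primitives satisfies only $\gr=\id$, and conjugating a weight-preserving differential by such an isomorphism can re-create weight-lowering components; concretely, if $\ast_{1,1}\colon\mathfrak{g}\otimes\mathfrak{g}\to\mathfrak{g}$ denotes the primitive component of the product of two primitives (by Lemma \ref{lemmacompcomm} and its surroundings, this is the pre-Lie composition of coderivations), the Leibniz rule yields only that the convolution bracket is the Hom-complex differential of $\ast_{1,1}$, i.e.\ homotopy-abelianness rather than the asserted strict vanishing. Closing (b) --- the compatibility of the Milnor--Moore identification with the CE weight grading --- is the genuine content of the lemma. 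In fairness, the paper's own four-line proof of Lemma \ref{lemmazero1} (``the Leibniz rule shows that the quadratic component vanishes on the entire complex'') elides the same point, so your proposal reproduces the paper's argument at the paper's level of rigor; the filtration claim is the one outright incorrect justification you added on top of it.
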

\begin{proof}
The proof is similar to Lemma \ref{lemmazero1}, and we omit the detail.
\end{proof}

\subsection{\sc The case of general $\mathcal{P}$}

\subsubsection{\sc }
Recall the general definition of the deformation complex $\Def_\Op(\mathcal{O}\xrightarrow{g}\mathcal{P})$ of a morphism of operads. 
To define $\Def_\Op(\mathcal{O}\xrightarrow{g}\mathcal{P})$, one replaces $\mathcal{O}$ by a cofibrant resolution $R(\mathcal{O})$, and one considers the operad maps $\mathrm{Map}_{\Op}(R(\mathcal{O}),\mathcal{P})$ in a formal neighborhood of the composition $R(\mathcal{O})\to\mathcal{O}\xrightarrow{g}\mathcal{P}$. In our case, when $\mathcal{O}$ is Koszul, one choses $R(\mathcal{O})=\Cobar_\Op(\mathcal{O}^\ash)$. 

We get:
\begin{equation}
\mathrm{Map}_{\Op}(R(\mathcal{O}),\mathcal{P})=\MC\Big(\Hom_\Sigma(\mathcal{O}^\ash,\mathcal{P}), d=d_0+D_g\Big)
\end{equation}
Here we consider the convolution Lie algebra $\Hom_\Sigma(\mathcal{O}^\ash,\mathcal{P})$, see Section \ref{section021}. The differential component $d_0$ comes from the inner differential on $\mathcal{P}$. The condition of compatibility with the cobar-differential is translated to the Maurer-Cartan equation, see [LV, Section 6.5]. The composition $\Cobar_\Op(\mathcal{O}^\ash)\to \mathcal{O}\xrightarrow{g}\mathcal{P}$ defines a Maurer-Cartan element $\omega_g$ in the convolution Lie algebra, and the differential component $D_g=\ad(\omega_g)$ is defined as the adjoint action of this element.

Finally we define the deformation complex of the operad morphism $g$ as
$$
\Def_\Op(\mathcal{O}\xrightarrow{g}\mathcal{P})=\Big(\Hom_\Sigma(\mathcal{O}^\ash,\mathcal{P}),d_0+D_g\Big)
$$

One has the following general statement:
\begin{lemma}\label{lemmaopdef}
Let $\mathcal{O}$ be a Koszul operad, $\mathcal{Q}$ an operad, $X$ an $\mathcal{O}{-}\mathcal{Q}$-bimodule.
Then the deformation complex $\Def(X\xrightarrow{\id}X)_\mathcal{Q}[1]$ is isomorphic to the deformation complex $\Def(\mathcal{O}\xrightarrow{f}[X,X]_\mathcal{Q})$:
\begin{equation}\label{goodiso}
\Def(X\xrightarrow{\id}X)_\mathcal{Q}[1]\simeq \Def(\mathcal{O}\xrightarrow{f}[X,X]_\mathcal{Q})
\end{equation}
 where the operad map $f\colon \mathcal{O}\to [X,X]_\mathcal{Q}$ is obtained from the bimodule structure on $X$, as in Section \ref{sectionmaprel}.
\end{lemma}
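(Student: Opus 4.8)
The plan is to show that the two complexes are built on one and the same underlying graded vector space, and then to match their Lie brackets and their differentials; I expect no resolution or homotopy-transfer argument to be needed, only a careful unwinding of the two definitions. First I would pin down the underlying spaces. Specialising the construction of Section~\ref{subsubsectionlierel} to a general Koszul operad $\mathcal{O}$ and to $X=Y$ with the identity bimodule map, and using the identity $[X\{n\},X\{n\}]_{\mathcal{Q}\{n\}}=([X,X]_\mathcal{Q})\{n\}$ together with Lemma~\ref{lemmashift}, one has
\begin{equation*}
\Def(X\xrightarrow{\id}X)_\mathcal{Q}=\big(\Hom_\Sigma(\mathcal{O}^\ash,[X,X]_\mathcal{Q})[-1],\ d_0+d_\Bar+d_{\id}\big),
\end{equation*}
so that $\Def(X\xrightarrow{\id}X)_\mathcal{Q}[1]$ has underlying graded space $\Hom_\Sigma(\mathcal{O}^\ash,[X,X]_\mathcal{Q})$. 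On the other hand, the operadic deformation complex recalled just above the Lemma reads
\begin{equation*}
\Def_\Op(\mathcal{O}\xrightarrow{f}[X,X]_\mathcal{Q})=\big(\Hom_\Sigma(\mathcal{O}^\ash,[X,X]_\mathcal{Q}),\ d_0+\ad(\omega_f)\big),\qquad \omega_f=f\circ\kappa,
\end{equation*}
where $\kappa\colon\mathcal{O}^\ash\to\mathcal{O}$ is the twisting morphism attached to the map $\phi$ of Lemma~\ref{liekoszul}. Thus the two complexes live on the same graded space, and it remains to compare brackets and differentials.

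Next I would match the two Lie brackets, observing that both are produced from the single operad $\Hom_\Op(\mathcal{O}^\ash,[X,X]_\mathcal{Q})$. On the right-hand side the bracket is the operadic convolution bracket of Section~\ref{section021}, i.e. the commutator of the pre-Lie product on the $\Sigma$-reduction of this operad. On the left-hand side it is obtained, as in Section~\ref{subsubsectionlierel}, from the map $\mathsf{Lie}\{1\}\to\Hom_\Op(\mathcal{O}^\ash,\mathcal{O})$ of Lemma~\ref{liekoszul} pushed forward along $f_*\colon\Hom_\Op(\mathcal{O}^\ash,\mathcal{O})\to\Hom_\Op(\mathcal{O}^\ash,[X,X]_\mathcal{Q})$ and then made to act via Remark~\ref{remsept}; here one uses that the left action of $[X,X]_\mathcal{Q}$ on itself employed \emph{loc.\ cit.}\ is nothing but its own operadic composition. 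The standard fact that the $\mathsf{Lie}\{1\}$-structure induced by $\phi$ recovers the pre-Lie commutator bracket then identifies the two, and I would check that this identification is compatible with passage to the $\mathcal{Q}$-relative inner hom $[-,-]_\mathcal{Q}$.

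Finally, the inner differentials $d_0$ coincide, so the claim reduces to the identity $d_\Bar+d_{\id}=\ad(\omega_f)$ on $\Hom_\Sigma(\mathcal{O}^\ash,[X,X]_\mathcal{Q})$. The crucial input is that the left $\mathcal{O}$-module structure $m_X\colon\mathcal{O}\circ X\to X$ entering the bar differential $d_\Bar$ of $\Bar_n(X)$ is, under the adjunction of Section~\ref{sectionmaprel} and Lemma~\ref{lemmabimod}, exactly the operad map $f$. Tracking $d_\Bar$ --- assembled from $\kappa$ and $m_X$ --- through the adjunction
\begin{equation*}
\Hom_{\Sigma,\Mod{-}\mathcal{Q}\{n\}}(\Bar_n(X)/\k,X\{n\})=\Hom_\Sigma(\mathcal{O}^\ash,([X,X]_\mathcal{Q})\{n\}),
\end{equation*}
one should see that the contributions of $d_\Bar$ on source and target, together with the Maurer--Cartan twist $d_{\id}=\ad(\Bar(\id)_\pr^\sim)$, combine into the adjoint action of $\omega_f=f\circ\kappa$. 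I expect this last bookkeeping to be the main obstacle: one must verify that the two left-hand pieces merge --- with the correct signs, and under both the operadic shift $\{n\}$ and the degree shift $[\pm1]$ --- into the single term $\ad(f\circ\kappa)$, with no spurious summands left over. Once this is done, the resulting isomorphism of graded spaces intertwines the brackets (second paragraph) and the differentials (this paragraph), giving the asserted isomorphism of dg Lie algebras.
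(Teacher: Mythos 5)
Your first and third paragraphs are, in substance, the paper's own proof: one observes that both sides have the same underlying graded space $\Hom_\Sigma(\mathcal{O}^\ash,[X,X]_\mathcal{Q})$, that the components $d_0$ agree, and that, up to the shift by $[1]$, one has $\ad(\omega_f)=d_\Bar+d_{\id}$, the adjoint action splitting Hochschild-style into the term inserting $\omega_f=f\circ\kappa$ into the inputs and the term composing with it at the root; the first is $d_\Bar^\sim$ (precisely because the bar differential is assembled from $\kappa$ and $m_X$, and $m_X$ corresponds to $f$ under Lemma \ref{lemmabimod}), the second is $d_{\id}$. The paper argues exactly this, equally briefly, so for what the lemma actually asserts --- an isomorphism of \emph{complexes} --- your route is correct and identical to the paper's. (One slip of wording: only the source $\Bar_n(X)$ carries a bar differential; there is no ``contribution of $d_\Bar$ on the target''.)

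Your second paragraph, however, contains a genuine error, though it concerns a claim the lemma does not make. The ``standard fact'' you invoke --- that the $\mathsf{Lie}\{1\}$-structure induced by the map $\phi$ of Lemma \ref{liekoszul} recovers the pre-Lie commutator bracket of Section \ref{section021} --- is false, for degree reasons alone. The construction of Section \ref{subsubsectionlierel} produces a Lie bracket on the \emph{shifted} space $\Hom_\Sigma(\mathcal{O}^\ash,[X,X]_\mathcal{Q})[-1]=\Def(X\xrightarrow{\id}X)_\mathcal{Q}$, equivalently an odd symmetric degree $+1$ operation on the unshifted space, built by inserting the twisting morphism $\kappa$; the convolution bracket of Section \ref{section021} is an antisymmetric degree $0$ operation on the unshifted space, built by inserting operad units. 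Operations of different symmetry and degree cannot coincide, and indeed the former vanishes for $f=\id$ (Lemma \ref{lemmazero2}) while the latter does not. The bracket on $\Def(X\xrightarrow{\id}X)_\mathcal{Q}[1]$ that \emph{does} agree with the operadic convolution bracket is the Milnor--Moore bracket arising from the monoid structure on $a_\rep^{\id}(X)$, and that identification is the content of the separate Lemma \ref{lemmacompcomm}, verified there by direct check. So this paragraph should simply be excised from your proof of the present lemma; as it stands it would fail, but nothing in the statement depends on it.
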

\begin{proof}
The isomorphism \eqref{goodiso} holds for the underlying graded vector spaces. Let us compare the differentials on both sides. The differential components $d_0$ are equal on the both sides. We claim that, up to the shift by [1],
\begin{equation}
D_g=d_\Bar+d_{\id}
\end{equation}
where we use the notations from \eqref{defcomplexalg}. Indeed, $D_g$ looks like the Hochschild cochain differential, it contains the regular and the two extreme terms, corresponded to $\omega_g\circ -$ and $-\circ \omega_g$, correspondingly. These two summands are equal to $d_\Bar$ and $d_{\id}$, correspondingly.

\end{proof}

One can alternatively describe the Lie bracket, given on $\Def(X\xrightarrow{\id}X)_\mathcal{Q}[1]$ by the Milnor-Moore theorem [Q, Appendix B], as follows. 
\begin{lemma}\label{lemmacompcomm}
The Lie bracket on $\Def(X\xrightarrow{\id}X)_\mathcal{Q}[1]$ defined via by the Milnor-Moore theorem is equal to the Lie bracket obtained from the convolution pre-Lie bracket on the operadic deformation complex $\Def_\Op(\mathsf{e}_n\xrightarrow{g}[X,X]_\mathcal{Q})$. For the case $\mathcal{Q}=I$, this Lie bracket can be also described as the Lie bracket on the graded space of coderivations of the cofree $\mathsf{e}_n^\ash$-coalgebra $\mathsf{e}_n^\ash\circ X$.
\end{lemma}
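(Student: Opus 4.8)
The plan is to show that both brackets are the commutator of one and the same composition operation on the common underlying graded space $\mathfrak{g}=\Def(X\xrightarrow{\id}X)_\mathcal{Q}[1]=\Hom_\Sigma(\mathsf{e}_n^\ash,[X,X]_\mathcal{Q})$, the identification being that of Lemma \ref{lemmaopdef}. On one side, the Milnor--Moore bracket is by construction the commutator $[x,y]=x*y-(-1)^{|x||y|}y*x$ of the associative product $*$ on $a_\rep^{\id}(X)=C_\CE(\Def(X\xrightarrow{\id}X)_\mathcal{Q},\k)$ coming from the monoid map \eqref{compq2}, restricted to the primitives $\mathfrak{g}$. On the other side, the convolution bracket is the commutator of the pre-Lie product $\Psi_1\star\Psi_2=\sum_i\Psi_1\circ_i\Psi_2$ of Section \ref{section021} on $\Hom_\Op(\mathsf{e}_n^\ash,[X,X]_\mathcal{Q})_\Sigma=\Hom_\Sigma(\mathsf{e}_n^\ash,[X,X]_\mathcal{Q})$. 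Thus it suffices to show that the $*$-commutator and the $\star$-commutator agree on $\mathfrak{g}$.

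I would first record the coderivation reformulation, which also yields the last assertion of the lemma. For $\mathcal{Q}=I$ the adjunction \eqref{adjcomp} gives $\Hom_\Sigma(\mathsf{e}_n^\ash,[X,X])=\Hom_\Sigma(\mathsf{e}_n^\ash\circ X,X)$, and corestriction to the cogenerators identifies this space with the graded Lie algebra of coderivations of the cofree $\mathsf{e}_n^\ash$-coalgebra $\mathsf{e}_n^\ash\circ X$; the standard description of coderivations of a cofree conilpotent coalgebra (cf. [LV, 11.2.2]) then shows that the corestriction of the commutator $[D_x,D_y]$ of two coderivations is precisely the convolution bracket of their corestrictions $x,y$. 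For general $\mathcal{Q}$ the same computation takes place inside $\mathscr{C}(\mathsf{e}_n^*,\mathcal{Q}\{n\})$: a coderivation of $\Bar_n(X)$ compatible with the right $\mathcal{Q}\{n\}$-action is determined by its corestriction in $\Hom_{\Sigma,\Mod{-}\mathcal{Q}\{n\}}(\Bar_n(X)/\k,X\{n\})=\Hom_\Sigma(\mathsf{e}_n^*,[X\{n\},X\{n\}]_{\mathcal{Q}\{n\}})$, and the commutator again becomes the relative convolution bracket of Section \ref{subsubsectionlierel}. Hence the convolution Lie bracket equals the commutator-of-coderivations bracket.

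The heart of the argument is then to identify the monoid product $*$ on primitives with composition of coderivations. Unwinding \eqref{compq}, an element of ${}_\mathcal{Q}F^{\id}_{X,X}(a)$ is a morphism $\phi\colon a\star\Bar_n(X)\to\Bar_n(X)$ in $\mathscr{C}(\mathsf{e}_n^*,\mathcal{Q}\{n\})$ equal to $\id$ on the coaugmentation fibre, and the composition law sends a pair $(\psi,\phi)$ to $\psi\circ(a'\star\phi)\colon (a\otimes a')\star\Bar_n(X)\cong a'\star(a\star\Bar_n(X))\to\Bar_n(X)$. To read off the bracket on $\mathfrak{g}$ I would take both bases $a,a'$ to be the two-dimensional coalgebra $\k\oplus\k t$ with $t$ primitive, so that the tautological morphisms are first-order, $\phi=\id+tD_x$ and $\psi=\id+t'D_y$ with $D_x,D_y$ the corresponding coderivations; the bidegree-$(1,1)$ component of the composite $\psi\circ(a'\star\phi)$ is then the composite operator $D_x\circ D_y$. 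Dualising to the representing coalgebra $C_\CE(\Def(X\xrightarrow{\id}X)_\mathcal{Q},\k)$, this says that $x*y$ corresponds to $D_x\circ D_y$ on primitives, whence $[x,y]=x*y-(-1)^{|x||y|}y*x$ corresponds to $[D_x,D_y]$, which by the previous paragraph is the convolution bracket of $x$ and $y$. This gives the claim.

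The step I expect to be the main obstacle is precisely this last linearisation: verifying, with the shifts $\{n\}$ and $[-1]$ and the right $\mathcal{Q}\{n\}$-equivariance of Proposition \ref{baradjrel} tracked correctly, that restricting the functorial composition \eqref{compq} to the square-zero base turns it into composition of coderivations in the correct degree and with the correct signs, and that the morphisms $\phi$ really linearise to coderivations rather than arbitrary maps (this uses that the first-order part of a twisted $\mathsf{e}_n^*$-coalgebra morphism is a Maurer--Cartan element of the convolution Lie algebra, i.e. a coderivation). Once this sign- and shift-bookkeeping is in place the identification of the two brackets is formal, exactly as in [T2, Section 5].
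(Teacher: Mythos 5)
Your proposal is correct and follows exactly the route the paper intends: the paper itself dismisses the statement with ``It is a direct check'', and its neighbouring unproved lemma (after Lemma \ref{lemmazero1}) already records the same two-step identification you carry out --- Milnor--Moore bracket $=$ commutator of coderivations of the cofree $\mathsf{e}_n^\ash$-coalgebra (via linearising the composition \eqref{compq} over a square-zero base), and coderivation commutator $=$ convolution bracket via corestriction to cogenerators. You have in effect supplied the ``direct check'' the paper omits, including the only genuinely delicate points (the $\mathcal{Q}\{n\}$-equivariant relative version and the shift/sign bookkeeping, which the commutator renders harmless).
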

It is a direct check.

\qed

\subsubsection{\sc }
One particular case of previous construction is obtained when $X=\mathcal{Q}=\mathcal{P}$, with the tautological right $\mathcal{P}$-action on $X=\mathcal{P}$. We know that $[\mathcal{P},\mathcal{P}]_\mathcal{P}=\mathcal{P}$, see Section \ref{sectioncatalg}.
We assume we are given a map of operads $g\colon \mathsf{e}_n\to\mathcal{P}$, which gives, after the operadic shift, a map of operads $g\{n\}\colon\mathsf{e}_n\{n\}\to\mathcal{P}\{n\}$. Then $\mathcal{P}\{n\}$ becomes a $\mathsf{e}_n^{-}\mathcal{P}\{n\}$-bimodule. Lemma \ref{lemmabimod} gives a map of operads 
$$
\mathsf{e}_n\{n\}\to[\mathcal{P}\{n\},\mathcal{P}\{n\}]_{\mathcal{P}\{n\}}=\mathcal{P}\{n\}
$$
which is equal to the map $g\{n\}$.

Moreover, one easily shows that the deformation complexes $$\Def_\Op(\mathcal{O}\xrightarrow{g}\mathcal{P})\text{  and  }\Def_\Op(\mathcal{O}\{n\}\xrightarrow{g\{n\}}\mathcal{P}\{n\})$$ are isomorphic.

Therefore, the results of Section \ref{sectionrel} are applied to a general map of operads $g\colon \mathsf{e}_n\to\mathcal{P}$.

\section{\sc Deformation theory of a morphism of operads $f\colon \mathsf{e}_n\to\mathcal{P}$ with $\mathsf{e}_n$-coalgebra base}
\subsection{\sc The Hopf algebra structure on the operad $\mathsf{e}_n$}
\subsubsection{\sc }
Let $\mathcal{E}$ be a topological operad. Then the diagonal maps $\Delta(n)\colon \mathcal{E}(n)\to\mathcal{E}(n)\times\mathcal{E}(n)$, which gives rise to a map of operads $\Delta\colon \mathcal{E}\to\mathcal{E}\times\mathcal{E}$. That is, any topological operad is Hopf.

When we apply the monoidal functor of singular homology, we get an operad $H_\ldot(\mathcal{E},\k)$ in graded vector spaces, such that for any $n$ one has a map
$$
\Delta(n)\colon H_\ldot(\mathcal{E}(n),\k)\to H_\ldot(\mathcal{E}(n),\k)\otimes H_\ldot(\mathcal{E}(n),\k)
$$
It is compatible with the operad compositions, and gives rise to a map of operads
$$
\Delta\colon H_\ldot(\mathcal{E},\k)\to H_\ldot(\mathcal{E},\k)\otimes H_\ldot(\mathcal{E},\k)
$$
The conclusion is that the homology operad of a topological operad is always a Hopf operad (in graded vector spaces).
\subsubsection{\sc}\label{sectionhopfen}
It is applied to the operad $\mathsf{e}_n=H_\ldot(\mathsf{E}_n,\k)$, what makes it a Hopf operad. 

One can write down the Hopf structure on $\mathsf{e}_n$ explicitly. 
The suboperad $\mathsf{Comm}\subset \mathsf{e}_n$ is ``group-like'': the element $c_s\in \mathsf{Comm}(s)\subset\mathsf{e}_n(s)$, equal to the composition of the product operation $c_2\in\mathsf{Comm}(2)$, satisfies
\begin{equation}\label{hopfcomm}
\Delta(c_s)=c_s\otimes c_s
\end{equation}
The suboperad $\mathsf{Lie}\{-n+1\}\subset\mathsf{e}_n$ forms the subspace of primitive elements in each arity, in the sense that
\begin{equation}\label{hopflie}
\Delta(\ell_s)=\ell_s\otimes c_s+c_s\otimes \ell_s
\end{equation}
for any $\ell_s\in\mathsf{Lie}\{-n+1\}(s)$.

As $\mathsf{e}_n=\mathsf{Comm}\circ\mathsf{Lie}\{-n+1\}$, the condition that $\Delta$ is an operad map defines it on any component $\mathsf{e}_n(s)$. 

\subsubsection{\sc }
One has:

\begin{lemma}\label{2lie}
Consider the map of operads
\begin{equation}
\mathsf{Lie}\{1\}\to\mathsf{e}_n\{n\}\xrightarrow{\mathrm{Hopf}} \mathsf{e}_n\{n\}\otimes_\lev\mathsf{e}_n=\Hom_\lev(\mathsf{e}^*_n\{-n\},\mathsf{e}_n)=\Hom_\lev(\mathsf{e}_n^\ash,\mathsf{e}_n)
\end{equation}
where the leftmost map is the shifted map $\mathsf{Lie}\{-n+1\}\to\mathsf{e}_n$. Then the composition is equal to the map defined in Lemma \ref{liekoszul}.
\end{lemma}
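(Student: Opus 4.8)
The plan is to compare two morphisms of operads sharing the source $\mathsf{Lie}\{1\}$ and the target $\Hom_\Op(\mathsf{e}_n^\ash,\mathsf{e}_n)$: the map $\phi$ of Lemma \ref{liekoszul}, and the composite in the statement. The latter is itself a composite of operad maps, namely the $\{n\}$-shift of the suboperad inclusion $\mathsf{Lie}\{-n+1\}\hookrightarrow\mathsf{e}_n$, followed by the shifted Hopf coproduct $\Delta\{n\}$, which is an operad map because the operadic shift is compatible with both monoidal structures by Lemma \ref{lemmashift}, followed by the stated identifications of the target. Since $\mathsf{Lie}$ is a quadratic operad generated in arity $2$, so is $\mathsf{Lie}\{1\}$, and any operad morphism out of it is determined by the image of the single generator $\omega\in\mathsf{Lie}\{1\}(2)$. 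Hence it suffices to show that the two maps agree on $\omega$, i.e. to compare two elements of $\Hom_{\Sigma_2}(\mathsf{e}_n^\ash(2),\mathsf{e}_n(2))$.

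Next I would make the arity-$2$ identifications explicit. The space $\mathsf{e}_n(2)$ is two-dimensional, with basis $t_1=c_2$ (the commutative product, of degree $0$) and $t_2=\ell_2$ (the bracket); by the Koszul formalism $\mathsf{e}_n^\ash(2)=E[1]$ is the suspension of $E=\mathsf{e}_n(2)$, with basis $c_2[1],\ell_2[1]$, so that by \eqref{lieexplicit} one has $\phi(\omega)=\sum_i t_i[1]^*\otimes t_i=c_2[1]^*\otimes c_2+\ell_2[1]^*\otimes\ell_2$, the diagonal desuspension $t_i[1]\mapsto t_i$. On the other side, the identification $\mathsf{e}_n\{n\}\otimes_\lev\mathsf{e}_n=\Hom_\lev(\mathsf{e}_n^\ash,\mathsf{e}_n)$ rests on the levelwise duality $\mathsf{e}_n\{n\}(2)\cong(\mathsf{e}_n^\ash(2))^*$. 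The key observation is that this pairing has internal degree $0$, and the degrees force it to be \emph{off-diagonal}: the degree of $c_2\in\mathsf{e}_n\{n\}(2)$ matches that of $\ell_2[1]\in\mathsf{e}_n^\ash(2)$, and the degree of $\ell_2\in\mathsf{e}_n\{n\}(2)$ matches that of $c_2[1]$. Consequently, under $\mathsf{e}_n\{n\}(2)\cong(\mathsf{e}_n^\ash(2))^*$ one gets $c_2\leftrightarrow\ell_2[1]^*$ and $\ell_2\leftrightarrow c_2[1]^*$.

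With these identifications in hand I would compute the composite on $\omega$. The shifted inclusion sends $\omega$ to $\ell_2\in\mathsf{e}_n\{n\}(2)$, and since $c_2$ is group-like by \eqref{hopfcomm} while $\ell_2$ is primitive, \eqref{hopflie} gives $\Delta(\ell_2)=\ell_2\otimes c_2+c_2\otimes\ell_2$, where the first tensor slot lies in $\mathsf{e}_n\{n\}(2)$ and the second in $\mathsf{e}_n(2)$. Translating the first slot through the off-diagonal identification turns this into $c_2[1]^*\otimes c_2+\ell_2[1]^*\otimes\ell_2$, which is exactly $\sum_i t_i[1]^*\otimes t_i=\phi(\omega)$. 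Thus the two images of $\omega$ coincide and the lemma follows. I would also record that both elements manifestly lie in the $\Sigma_2$-invariant subspace and that the expression $\sum_i t_i[1]^*\otimes t_i$ is basis-independent, so no separate equivariance check is required.

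The main obstacle is bookkeeping rather than conceptual: one must pin down the degree-shift and sign conventions (for the operadic shift $\{1\}$, the suspension $[1]$ occurring in $\mathsf{e}_n^\ash(2)=E[1]$, and the levelwise dual) consistently enough to see that the pairing realizing $\mathsf{e}_n\{n\}(2)\cong(\mathsf{e}_n^\ash(2))^*$ is off-diagonal, interchanging the group-like class $c_2$ and the primitive class $\ell_2$. This interchange is precisely what cancels the ``swap'' built into the primitivity relation \eqref{hopflie}, so that the Hopf composite reproduces the diagonal desuspension $\phi(\omega)$ of Lemma \ref{liekoszul}. Checking that the signs yield $\phi(\omega)$ on the nose, rather than up to a nonzero scalar, is the only delicate point, and it is settled by fixing the normalization of the duality pairing to match \eqref{lieexplicit}.
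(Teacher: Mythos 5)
Your proposal is correct and takes essentially the same route as the paper: reduce by quadraticity of $\mathsf{Lie}\{1\}$ to comparing the two images of the arity-2 generator, then check agreement explicitly using \eqref{hopfcomm} and \eqref{hopflie} on the Hopf side and \eqref{lieexplicit} on the other. The paper leaves this arity-2 verification as an unwritten ``explicit computation,'' and your off-diagonal identification $c_2\leftrightarrow \ell_2[1]^*$, $\ell_2\leftrightarrow c_2[1]^*$ under $\mathsf{e}_n\{n\}(2)\cong(\mathsf{e}_n^\ash(2))^*$ is precisely the content of that check, carried out correctly.
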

\begin{proof}
We deal with two apriori different maps $\mathsf{Lie}\{1\}\to\mathcal{O}$ to an operad $\mathcal{O}$. The operad $\mathsf{Lie}$ is quadratic, therefore the two maps coincide if they coincide on $\mathsf{Lie}\{1\}(2)$. But for arity 2 it is checked by an explicit computation, using \eqref{hopfcomm} and \eqref{hopflie}, on one side, and \eqref{lieexplicit}, on the other side. 
\end{proof}

\subsubsection{\sc }
An immediate consequence of the Hopf structure on the operad $\mathsf{e}_n$ is:
\begin{lemma}\label{lemmatensorn}
Let $b_1,b_2$ be two $\mathsf{e}_n$-algebras. Then the tensor product of the underlying (dg) vector spaces $b_1\otimes b_2$ is naturally a $\mathsf{e}_n$-algebra. Similarly, let $a_1,a_2$ be two $\mathsf{e}_n^*$-coalgebras. Then the tensor product of the underlying (dg) vector spaces is naturally a $\mathsf{e}_n^*$-coalgebra.
\end{lemma}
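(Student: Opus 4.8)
The plan is to deduce both assertions formally from the Hopf structure on $\mathsf{e}_n$ recorded in Section~\ref{sectionhopfen}, combined with Lemma~\ref{lemmastarprod} and the elementary principle that restriction along a morphism of operads carries algebras to algebras, while the dual operation along a morphism of cooperads carries coalgebras to coalgebras. No genuinely new ingredient should be needed; the work is entirely in assembling the existing pieces.

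First I would dispose of the algebra case. Viewing $b_1$ as a vector space and $b_2$ as the arity-zero symmetric sequence $b_2^{(0)}$, both of which are $\mathsf{e}_n$-algebras, Lemma~\ref{lemmastarprod}(i) shows that the $\star$-product $b_1 \star b_2^{(0)} = (b_1 \otimes b_2)^{(0)}$ is an algebra over the level-wise product $\mathsf{e}_n \otimes_\lev \mathsf{e}_n$ (which is itself an operad, by the Corollary asserting that the level-wise tensor product of two operads is again an operad). The Hopf structure of Section~\ref{sectionhopfen} supplies an operad morphism $\Delta \colon \mathsf{e}_n \to \mathsf{e}_n \otimes_\lev \mathsf{e}_n$. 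Restricting the $(\mathsf{e}_n \otimes_\lev \mathsf{e}_n)$-action along $\Delta$ then equips $b_1 \otimes b_2$ with an $\mathsf{e}_n$-algebra structure; associativity and $\Sigma$-equivariance of the resulting action require no separate verification, since they are inherited through the operad map $\Delta$.

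Dually, I would handle the coalgebra case. Since every $\mathsf{e}_n(m)$ is finite-dimensional, the linear dual of $\Delta$ is a morphism of finite cooperads $\mathsf{e}_n^* \otimes_\lev \mathsf{e}_n^* \to \mathsf{e}_n^*$, namely the Hopf cooperad structure; here $\mathsf{e}_n^* \otimes_\lev \mathsf{e}_n^*$ is again a finite cooperad by the corresponding Corollary. By Lemma~\ref{lemmastarprod}(ii), $a_1 \star a_2^{(0)} = (a_1 \otimes a_2)^{(0)}$ is a coalgebra over $\mathsf{e}_n^* \otimes_\lev \mathsf{e}_n^*$, and transporting its coalgebra structure along the cooperad morphism $\mathsf{e}_n^* \otimes_\lev \mathsf{e}_n^* \to \mathsf{e}_n^*$ yields the sought $\mathsf{e}_n^*$-coalgebra structure on $a_1 \otimes a_2$.

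I do not expect a real obstacle: the sole input that is not pure formalism is the existence of the operad morphism $\Delta$, and this is exactly what Section~\ref{sectionhopfen} provides, so the statement is genuinely an immediate consequence. The only bookkeeping to keep straight is that, in the dg setting, the shuffle identification $(b_1 \otimes b_2)^{\otimes m} \cong b_1^{\otimes m} \otimes b_2^{\otimes m}$ implicit in Lemma~\ref{lemmastarprod} introduces the usual Koszul signs; I would record these but not dwell on them.
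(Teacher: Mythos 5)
Your proposal is correct and follows essentially the same route as the paper: the paper's proof likewise observes that $b_1\otimes b_2$ is an algebra over $\mathsf{e}_n\otimes\mathsf{e}_n$ and then restricts along the Hopf diagonal $\Delta\colon \mathsf{e}_n\to\mathsf{e}_n\otimes\mathsf{e}_n$, treating the coalgebra case as the dual analogue. Your only deviation is cosmetic --- you justify the first step by specializing Lemma \ref{lemmastarprod} to the arity-zero symmetric sequence $b_2^{(0)}$, where the paper simply invokes the standard fact directly.
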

\begin{proof}
We prove the first statement, the second is analogous. For $x$ an algebra over an operad $\mathcal{O}_1$ and $y$ an algebra over an operad $\mathcal{O}_2$, the tensor product $x\otimes y$ is an algebra over the operad $\mathcal{O}_1\otimes \mathcal{O}_2$.
In our case, $b_1\otimes b_2$ is an algebra over the operad $\mathsf{e}_n\otimes\mathsf{e}_n$. The diagonal map $\Delta\colon \mathsf{e}_n\to \mathsf{e}_n\otimes\mathsf{e}_n$ makes $b_1\otimes b_2$ an algebra over $\mathsf{e}_n$.
\end{proof}
We will use the following variation of the lemma:

\begin{lemma}\label{lemmatensornbis}
Let $b\in\Vect(\k), X\in\Vect(\k)_\Sigma$ be algebras over the operad $\mathsf{e}_n$. Then $b\star X=b^{(0)}\boxtimes X$ is an algebra over the operad $\mathsf{e}_n$. Similarly, for an $\mathsf{e}_n^*$-coalgebra $a\in\Vect(\k)$, and an $\mathsf{e}_n^*$-coalgebra $Y\in\Vect(\k)_\Sigma$, the symmetric sequence $a\star Y$ is a coalgebra over $\mathsf{e}_n^*$.
\end{lemma}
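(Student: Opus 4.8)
The plan is to deduce this directly from Lemma \ref{lemmastarprod} together with the Hopf (co)operad structure on $\mathsf{e}_n$, following the same pattern as the proof of Lemma \ref{lemmatensorn}. There the tensor product of two conventional algebras was handled by the elementary fact that $x\otimes y$ is an algebra over $\mathcal{O}_1\otimes_\lev\mathcal{O}_2$; here the role of that fact is played by Lemma \ref{lemmastarprod}, which is precisely the analogous statement for the $\star$-product $V^{(0)}\boxtimes X$.

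For the algebra half I would argue as follows. Since $b$ is an $\mathsf{e}_n$-algebra and $X$ is an $\mathsf{e}_n$-algebra, Lemma \ref{lemmastarprod}(i) with $\mathcal{P}_1=\mathcal{P}_2=\mathsf{e}_n$ shows that $b\star X = b^{(0)}\boxtimes X$ carries a natural structure of algebra over the level-wise tensor product operad $\mathsf{e}_n\otimes_\lev\mathsf{e}_n$. The Hopf structure recalled in Section \ref{sectionhopfen} supplies an operad map $\Delta\colon\mathsf{e}_n\to\mathsf{e}_n\otimes_\lev\mathsf{e}_n$; restricting the $\mathsf{e}_n\otimes_\lev\mathsf{e}_n$-algebra structure on $b\star X$ along $\Delta$ produces the desired $\mathsf{e}_n$-algebra structure.

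The coalgebra half is strictly dual. Here $a$ is an $\mathsf{e}_n^*$-coalgebra and $Y$ is an $\mathsf{e}_n^*$-coalgebra, so Lemma \ref{lemmastarprod}(ii) with $\mathcal{C}_1=\mathcal{C}_2=\mathsf{e}_n^*$ makes $a\star Y$ a coalgebra over the cooperad $\mathsf{e}_n^*\otimes_\lev\mathsf{e}_n^*$, which is again finite as the level-wise tensor product of finite cooperads. Dualizing $\Delta$ — legitimate because $\mathsf{e}_n$ has finite-dimensional components, whence $(\mathsf{e}_n\otimes_\lev\mathsf{e}_n)^*=\mathsf{e}_n^*\otimes_\lev\mathsf{e}_n^*$ — gives the Hopf cooperad map $\mathsf{e}_n^*\otimes_\lev\mathsf{e}_n^*\to\mathsf{e}_n^*$ of \eqref{intro78}, and pushing the coalgebra structure on $a\star Y$ forward along this map equips it with an $\mathsf{e}_n^*$-coalgebra structure.

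Since all the substance is already contained in Lemma \ref{lemmastarprod} and in the Hopf structure of Section \ref{sectionhopfen}, there is no serious obstacle. The one point deserving a sentence of care is the direction in which structures are transported: restriction of algebra structures proceeds along an operad map pointing \emph{into} the larger operad, so the diagonal $\Delta\colon\mathsf{e}_n\to\mathsf{e}_n\otimes_\lev\mathsf{e}_n$ is what is needed, whereas push-forward of coalgebra structures proceeds along a cooperad map pointing \emph{out of} the larger cooperad, so one genuinely requires $\mathsf{e}_n^*\otimes_\lev\mathsf{e}_n^*\to\mathsf{e}_n^*$ rather than its reverse. These two maps are dual to one another, which is exactly why the single Hopf datum on $\mathsf{e}_n$ governs both halves of the statement.
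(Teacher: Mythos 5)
Your proof is correct and matches the paper's own argument, which simply notes that the statement is proven analogously to Lemma \ref{lemmatensorn} using Lemma \ref{lemmastarprod}: apply Lemma \ref{lemmastarprod} with $\mathcal{P}_1=\mathcal{P}_2=\mathsf{e}_n$ (resp.\ $\mathcal{C}_1=\mathcal{C}_2=\mathsf{e}_n^*$) and then transport the structure along the Hopf operad map $\Delta\colon\mathsf{e}_n\to\mathsf{e}_n\otimes_\lev\mathsf{e}_n$ (resp.\ its dual $\mathsf{e}_n^*\otimes_\lev\mathsf{e}_n^*\to\mathsf{e}_n^*$). Your explicit remark on the opposite directions of transport for algebras versus coalgebras is a welcome clarification of what the paper leaves implicit in the word ``analogously.''
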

See Section \ref{defcomm11} for the definition of $a\star X$. The statement is proven analogously with the Lemma above, using Lemma \ref{lemmastarprod}.

\subsection{\sc The functor $_\mathcal{Q}G_{X,Y}^f$ and its representability}
\subsubsection{\sc }
Let $X$ be an $\mathsf{e}_n$-algebra in $\Vect(\k)_\Sigma$. Then the bar-complex $$\Bar_{n}(X)=(\mathsf{e}_n^*(X\{n\}),\ d_\Bar)$$ 
is an $\mathsf{e}_n^*$-coalgebra, see \eqref{barshifted1}, \eqref{barshifted2}. 

Let $a$ be another $\mathsf{e}_n^*$-coalgebra. Then $a\star\Bar_n(X)$ is a $\mathsf{e}_n$-coalgebra, by Lemma \ref{lemmatensornbis}.

We consider directly the ``relative'' case here. 

Let $\mathcal{Q}$ be an operad, $X,Y$-two $\mathsf{e}_n{-}\mathcal{Q}$-bimodules, $f\colon X\to Y$ a bimodule map.
The map $f$ defines a map $\Bar(f)\colon \Bar_n(X)\to \Bar_n(Y)$. It is a morphism in the category $\mathscr{C}(\mathsf{e}_n^*,\mathcal{Q}\{n\})$, see Section \ref{sectionbarrel}.

Denote by $\Coalg_n=\Coalg_n(\k)$ the category of pro-conilpotentent $\mathsf{e}_n^*$-coalgebras over $\k$.

Define the functor
$$
_\mathcal{Q}G_{X,Y}^f\colon\Coalg_n\to\Sets
$$
as follows:
\begin{equation}\label{gqoriginal}
_\mathcal{Q}G_{X,Y}^f(a)=\big\{\phi\in\Hom_{\mathscr{C}(\mathsf{e}_n^*,\mathcal{Q}\{n\})}(a\star \Bar_n(X),\Bar_n(Y)),\ \phi\circ \eta=\Bar(f)\big\}
\end{equation}
Here $\eta\colon \k\to a$ is the coaugmentation map.  

Proposition \ref{baradjrel} gives:
\begin{equation}\label{eqlongn}
\begin{aligned}
_\mathcal{Q}G^f_{X,Y}(a)&=\{\theta\in\MC(\Hom_{\Sigma,\Mod{-}\mathcal{Q}\{n\}}((a\star\Bar_n(X))/\k,
Y\{n\})[-1],\ \theta\circ \eta=\Bar(f)_\pr\}\\&=
\{\theta\in\MC(\Hom_\k(a,\Hom_{\Sigma,\Mod{-}\mathcal{Q}\{n\}}(\Bar_n(X)/\k,Y\{n\})[-1]),\ \theta\circ\eta=\Bar(f)_\pr\}\\&=
\{\theta\in\MC(\Hom_\k(a,\Hom_\Sigma(\mathsf{e}_n^*,[X\{n\},Y\{n\}]_{\mathcal{Q}\{n\}}))),\ \theta\circ\eta=\Bar(f)^\sim_\pr\}
\end{aligned}
\end{equation}
\subsubsection{\sc }
Denote
\begin{equation}
\Def_0^n(X\xrightarrow{f}Y)_\mathcal{Q}[1]=\Def_0(X\xrightarrow{f}Y)_\mathcal{Q}[1]=(\Hom_{\Sigma}(\mathsf{e}_n^*,[X\{n\},Y\{n\}]_{\mathcal{Q}\{n\}}), d=d_0+d_\Bar^\sim)
\end{equation}
cf. \eqref{def0q}.

\begin{lemma}\label{lemmadef0en}
In the notations as above, the complex $\Def_0^n(X\xrightarrow{f}Y)_\mathcal{Q}[1]$ enjoys a natural $\mathsf{e}_n\{n\}$-algebra structure. Moreover, the undelying $\mathsf{Lie}\{1\}$-algebra structure on $\Def_0^n(X\xrightarrow{f}Y)_\mathcal{Q}[1]$ agrees with the Lie algebra structure on $\Def_0(X\xrightarrow{f}Y)_\mathcal{Q}$ given in Section \ref{subsubsectionlierel}.
\end{lemma}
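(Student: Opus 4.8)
The plan is to realise the $\mathsf{e}_n\{n\}$-algebra structure as a lift, along the Hopf coproduct, of the very operad map that already produced the $\mathsf{Lie}\{1\}$-structure in Section \ref{subsubsectionlierel}. Two ingredients are in place. First, by the relative convolution construction of Section \ref{sectionbarrel} (reused in Section \ref{subsubsectionlierel}), the operad $\Hom_\Op(\mathsf{e}_n^*,\mathsf{e}_n\{n\})=\Hom_\Op(\mathsf{e}_n^\ash,\mathsf{e}_n)$ acts on $D_\Sigma=\Hom_\Sigma(\mathsf{e}_n^*,[X\{n\},Y\{n\}]_{\mathcal{Q}\{n\}})$, through the operad map $\mathsf{e}_n\{n\}\to[Y\{n\},Y\{n\}]_{\mathcal{Q}\{n\}}$ and Lemma \ref{lca}. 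Second, taking the full dg $\mathsf{e}_n^*$-coalgebra $C=\Bar_n(X)$ (with differential $d_0+d_\Bar$) in the $\mathcal{Q}$-relative form of Lemma \ref{lca} established in Section \ref{sectionbarrel}, the induced differential $d_0+d_\Bar^\sim$ on $D_\Sigma$ is a derivation of every operation of $\Hom_\Op(\mathsf{e}_n^*,\mathsf{e}_n\{n\})$, not merely of the $\mathsf{Lie}\{1\}$-operations checked in Section \ref{subsubsectionlierel}.

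First I would produce the lift. The Hopf coproduct $\Delta\colon\mathsf{e}_n\to\mathsf{e}_n\otimes_\lev\mathsf{e}_n$ is a map of operads; applying the operadic shift and Lemma \ref{lemmashift}(iii) yields a map of operads $\Phi\colon\mathsf{e}_n\{n\}\to\mathsf{e}_n\{n\}\otimes_\lev\mathsf{e}_n$. Via level-wise duality together with $\mathsf{e}_n^\ash=\mathsf{e}_n^*\{-n\}$ one identifies $\mathsf{e}_n\{n\}\otimes_\lev\mathsf{e}_n$ with the convolution operad $\Hom_\Op(\mathsf{e}_n^\ash,\mathsf{e}_n)=\Hom_\Op(\mathsf{e}_n^*,\mathsf{e}_n\{n\})$, and under this identification $\Phi$ is exactly the operad map of Lemma \ref{2lie}. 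Composing $\Phi$ with the action recalled above defines an $\mathsf{e}_n\{n\}$-algebra structure on $D_\Sigma=\Def_0^n(X\xrightarrow{f}Y)_\mathcal{Q}[1]$; since its operations factor through $\Hom_\Op(\mathsf{e}_n^*,\mathsf{e}_n\{n\})$, the differential $d_0+d_\Bar^\sim$ differentiates them, so this is a structure of dg $\mathsf{e}_n\{n\}$-algebra.

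The comparison of the underlying Lie structures is then immediate: the $\mathsf{Lie}\{1\}$-algebra underlying the $\mathsf{e}_n\{n\}$-structure is obtained by restricting $\Phi$ along the inclusion $\mathsf{Lie}\{1\}\hookrightarrow\mathsf{e}_n\{n\}$ (the shift of $\mathsf{Lie}\{-n+1\}\subset\mathsf{e}_n$) and acting, and Lemma \ref{2lie} says precisely that this composite coincides with the Koszul map $\phi$ of Lemma \ref{liekoszul}, which is the map used to define the bracket in Section \ref{subsubsectionlierel}. I expect the only delicate points to be the identification of $\mathsf{e}_n\{n\}\otimes_\lev\mathsf{e}_n$ with $\Hom_\Op(\mathsf{e}_n^*,\mathsf{e}_n\{n\})$ \emph{as operads}, so that $\Phi$ genuinely lands in the acting operad, and the derivation property of $d_\Bar^\sim$ for the higher (non-Lie) operations; the former is guaranteed by the wording of Lemma \ref{2lie}, and the latter by the $\mathcal{Q}$-relative convolution formalism of Lemma \ref{lca} and Section \ref{sectionbarrel} applied to $\Bar_n(X)$.
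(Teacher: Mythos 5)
Your proposal is correct and follows essentially the same route as the paper's own proof: both obtain the action of the convolution operad $\Hom_\Op(\mathsf{e}_n^*,[Y\{n\},Y\{n\}]_{\mathcal{Q}\{n\}})$ from Lemma \ref{lca}, restrict it along the bimodule-induced map $\mathsf{e}_n\{n\}\to[Y\{n\},Y\{n\}]_{\mathcal{Q}\{n\}}$ to an action of $\Hom_\Op(\mathsf{e}_n^*,\mathsf{e}_n\{n\})=\mathsf{e}_n\otimes_\lev\mathsf{e}_n\{n\}$, pull back along the shifted Hopf coproduct $\mathsf{e}_n\{n\}\to\mathsf{e}_n\{n\}\otimes_\lev\mathsf{e}_n$, and identify the underlying $\mathsf{Lie}\{1\}$-structure with that of Section \ref{subsubsectionlierel} via Lemma \ref{2lie}. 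Your only imprecision is calling $\Phi$ ``the operad map of Lemma \ref{2lie}'' (that lemma concerns the composite with $\mathsf{Lie}\{1\}\to\mathsf{e}_n\{n\}$, which is exactly what your comparison step uses), so nothing of substance differs.
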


\begin{proof}
By Lemma \ref{lca}, the convolution operad $\Hom_\lev(\mathsf{e}_n^*,[Y\{n\},Y\{n\}]_{\mathcal{Q}\{n\}})$ acts on the symmetric sequence $\Hom_\lev(\mathsf{e}_n^*,[X\{n\},Y\{n\}]_{\mathcal{Q}\{n\}})$ and, therefore, on $\Hom_\Sigma(\mathsf{e}_n^*,[X\{n\},Y\{n\}]_{\mathcal{Q}\{n\}})$.
This action is compatible with the differential $d_0+d_\Bar^\sim$, which, in turn, acts on the components of the level Hom. 
Now we recall the operad map $\mathsf{e}_n\to [Y,Y]_\mathcal{Q}$, coming from the $\mathsf{e}_n{-}\mathcal{Q}$-bimodule structure, see Section \ref{sectionmaprel}. Then there is the shifted operad map $\mathsf{e}_n^*\{n\}\to [Y\{n\},Y\{n\}]_{\mathcal{Q}\{n\}}$, and the convolution operad $\Hom_\lev(\mathsf{e}_n^*,\mathsf{e}_n\{n\})=\mathsf{e}_n\otimes \mathsf{e}_n\{n\}$ becomes acting on 
$\Def_0^n(X\xrightarrow{f}Y)_\mathcal{Q}[1]$. Now the Hopf structure on the operad $\mathsf{e}_n$ (see Section \ref{sectionhopfen}) gives an operad map $\mathsf{e}_n\{n\}\to\mathsf{e}_n\otimes\mathsf{e}_n\{n\}$, which proves the first statement. 

For the second statement, we notice that the Lie algebra structure on $\Def_0(X\xrightarrow{f}Y)_\mathcal{Q}$ given in Section \ref{subsubsectionlierel} was constructed in the similar way, considering the action of the operad $\Hom_\lev(\mathsf{e}_n^*,[Y\{n\},Y\{n\}]_{\mathcal{Q}\{n\}})$ as the first step, followed by restricting this action to the action of operad $\Hom_\lev(\mathsf{e}_n^*,\mathsf{e}_n\{n\})$, and then restricting it to the operad $\mathsf{Lie}\{1\}$, by Lemma \ref{liekoszul}. So the claim is that the two $\mathsf{Lie}\{1\}$ actions coincide, and it follows from Lemma \ref{2lie}.

\end{proof}

\subsubsection{\sc }
We can twist the differential of deformation complex $\Def^n_0(X\xrightarrow{f}Y)_\mathcal{Q}$ (which, as a dg Lie algebra, does not depend on the map $f$) by the adjoint action $d_f=\ad(\Bar(f)\{n\}_\pr^\sim)$ of the Maurer-Cartan element $\Bar(f)\{n\}^\sim_\pr$, as in Section \ref{subsubsectionlierel}:
\begin{equation}\label{eqdefen}
\Def^n(X\xrightarrow{f}Y)_\mathcal{Q}:=\big(\Hom_\Sigma(\mathsf{e}_n^*, [X\{n\},Y\{n\}]_{\mathcal{Q}\{n\}})[-1], d=d_0+d_\Bar+d_f\big)
\end{equation}
The new thing is that this twisting preserves the operad $\mathsf{e}_n\{n\}$ acting on the shifted by [1] complex
$\Def^n(X\xrightarrow{f}Y)_\mathcal{Q}[1]$.

It can be checked by hand, of course. A more conceptual explanation goes through the second statement of Lemma \ref{lemmadef0en}. By this statement, the operad acting on the twisted complex is the operadic twisting $\Tw(\mathsf{Lie}\{1\}\to\mathsf{e}_n\{n\})$, see [DW]. This operad is well-known to be weak equivalent to the operad $\mathsf{e}_n\{n\}$, see loc.cit., Sect. 4.3.

We conclude, that $\Def^n(X\xrightarrow{f}Y)_\mathcal{Q}[1]$, defined in \eqref{eqdefen}, is a dg $\mathsf{e}_n\{n\}$-algebra.

\begin{remark}{\rm
Note that $\Def_0^n(X\xrightarrow{f}Y)_\mathcal{Q}$ is well-defined as a symmetric sequence, and only the last differential component  $d_f$ mixes the components of the symmetric sequence up. As well, the operadic twisting is a useful tool to be applied at this ``last step'', when the components of the symmetric sequence from the previous steps are being mixed up, and we are interesting which operad acts on the resulting complex. Many non-trivial examples of this idea in use can be found in [W], [CW], [DW], ...
}
\end{remark}

\subsubsection{\sc}
Consider the bar-complex $\Bar_{\mathsf{e}_n\{n\}}(\Def^n(X\xrightarrow{f}Y)_\mathcal{Q}[1])$ over the operad $\mathsf{e}_n\{n\}$. It is a $\mathsf{e}_n^*$-coalgebra. 

One has:
\begin{prop}\label{proprepn}
The functor $_\mathcal{Q}G^f_{X,Y}\colon \Coalg_n\to\Sets$ from the category of pro-conilpotent $\mathsf{e}_n^*$-coalgebras to the category of sets is representable, by the $\mathsf{e}_n^*$-coalgebra $A_\rep^f=\Bar_{\mathsf{e}_n\{n\}}(\Def^n(X\xrightarrow{f}Y)_\mathcal{Q}[1])$:
\begin{equation}
_\mathcal{Q}G_{X,Y}^f(a)=\Hom_{\mathscr{C}(\mathsf{e}_n^*,\mathcal{Q})}(a\star\Bar_n(X),\Bar_n(Y))=\Hom_{\Coalg_n}(a,\Bar_{\mathsf{e}_n\{n\}}(\Def^n(X\xrightarrow{f}Y)_\mathcal{Q}[1]))
\end{equation}
\end{prop}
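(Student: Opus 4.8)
The plan is to run the argument proving Proposition~\ref{propreprbasic} essentially verbatim, replacing the cocommutative cooperad $\mathsf{Comm}^*$ by the Hopf cooperad $\mathsf{e}_n^*$ throughout. The two structural facts that upgrade the cocommutative case to the present one are already in place: Lemma~\ref{lemmatensornbis}, which makes $a\star\Bar_n(X)$ an $\mathsf{e}_n^*$-coalgebra so that ${}_\mathcal{Q}G^f_{X,Y}$ is a well-defined functor on $\Coalg_n$, and Lemma~\ref{lemmadef0en} together with the subsequent twisting, which endows $\Def^n(X\xrightarrow{f}Y)_\mathcal{Q}[1]$ with a genuine $\mathsf{e}_n\{n\}$-algebra structure (so that $\Bar_{\mathsf{e}_n\{n\}}(\Def^n(X\xrightarrow{f}Y)_\mathcal{Q}[1])$ is a bona fide $\mathsf{e}_n^*$-coalgebra). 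I would start from the last line of the chain~\eqref{eqlongn}, which, via the $\mathcal{Q}$-relative bar--cobar adjunction of Proposition~\ref{baradjrel} and the star--Hom adjunction, already presents ${}_\mathcal{Q}G^f_{X,Y}(a)$ as the set of Maurer--Cartan elements $\theta$ of the convolution complex $\Hom_\k\bigl(a,\Hom_\Sigma(\mathsf{e}_n^*,[X\{n\},Y\{n\}]_{\mathcal{Q}\{n\}})\bigr)$ satisfying the boundary condition $\theta\circ\eta=\Bar(f)^\sim_\pr$.

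Next I would absorb the boundary condition exactly as in the cocommutative case. Writing $L_0=\Def_0^n(X\xrightarrow{f}Y)_\mathcal{Q}$ and $L_a=\Hom_\k(a,L_0)$, the coaugmentation--counit pair $\k\xrightarrow{\eta}a\xrightarrow{\varepsilon}\k$ induces a retraction $L_0\xrightarrow{\varepsilon_*}L_a\xrightarrow{\eta_*}L_0$, and every admissible $\theta$ decomposes uniquely as $\theta=\varepsilon_*(\Bar(f)^\sim_\pr)+\xi$ with $\xi\in\Ker(\eta_*)=\Hom_\k(a/\k,L_0)$. Using that $\varepsilon_*(\Bar(f)^\sim_\pr)$ is itself a Maurer--Cartan element, the Maurer--Cartan equation for $\theta$ collapses to the Maurer--Cartan equation for $\xi$ in the dg Lie algebra $\Hom_\k(a/\k,L_0)$ twisted by $\ad(\Bar(f)^\sim_\pr)=d_f$, that is, in $\Hom_\k\bigl(a/\k,\Def^n(X\xrightarrow{f}Y)_\mathcal{Q}\bigr)$ with $\Def^n$ as in~\eqref{eqdefen}. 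This identifies ${}_\mathcal{Q}G^f_{X,Y}(a)$ with $\MC\bigl(\Hom_\k(a/\k,\Def^n(X\xrightarrow{f}Y)_\mathcal{Q})\bigr)$.

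It then remains to recognize this Maurer--Cartan set as a space of coalgebra maps. Here I would invoke the bar--cobar adjunction of Proposition~\ref{propmainconj} (equivalently its specialization Proposition~\ref{propbarcobarbis}) for the Koszul operad $\mathsf{e}_n\{n\}$, whose shifted Koszul dual cooperad is $\mathsf{e}_n^*$ (Section~\ref{defcomm12}), applied with the $\mathsf{e}_n\{n\}$-algebra $\Def^n(X\xrightarrow{f}Y)_\mathcal{Q}[1]$ in place of the algebra and the pro-conilpotent $\mathsf{e}_n^*$-coalgebra $a$ in place of the coalgebra. Since $a$ is an arity-$0$ object, $a/F^0a=a/\k$, and the middle Maurer--Cartan set of Proposition~\ref{propmainconj} is exactly $\MC\bigl(\Hom_\k(a/\k,\Def^n(X\xrightarrow{f}Y)_\mathcal{Q})\bigr)$, while its right-hand side is $\Hom_{\Coalg_n}\bigl(a,\Bar_{\mathsf{e}_n\{n\}}(\Def^n(X\xrightarrow{f}Y)_\mathcal{Q}[1])\bigr)$. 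Composing the three identifications yields the asserted representability.

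The step I expect to demand the most care is the compatibility of the two convolution Lie structures that enter: the structure on $\Hom_\k(a/\k,\Def^n(X\xrightarrow{f}Y)_\mathcal{Q})$ produced in the first step (through Proposition~\ref{baradjrel} and \eqref{eqlongn}, which secretly uses the $\mathsf{e}_n^*$-coalgebra structure of $a$ via the Hopf cooperad comultiplication of Lemma~\ref{lemmatensornbis}), and the convolution structure underlying the $\mathsf{e}_n\{n\}$ bar--cobar adjunction invoked in the last step. Both are instances of the same operadic convolution construction $\Conv(\mathsf{e}_n^*,-)$, so they coincide once one knows that the $\mathsf{e}_n\{n\}$-algebra structure on $\Def^n(X\xrightarrow{f}Y)_\mathcal{Q}[1]$ implicit in the first step agrees with the one fixed in Lemma~\ref{lemmadef0en}; this is precisely the content of the second assertion of that lemma (agreement of the two $\mathsf{Lie}\{1\}$-actions, via Lemma~\ref{2lie}), so no fresh computation is needed. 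The genuinely nontrivial input, rather than an obstacle internal to this proof, is that the $f$-twist does not destroy the coalgebra structure of the representing object, which holds because $\Def^n(X\xrightarrow{f}Y)_\mathcal{Q}[1]$ remains an $\mathsf{e}_n\{n\}$-algebra after the operadic twisting $\Tw(\mathsf{Lie}\{1\}\to\mathsf{e}_n\{n\})$.
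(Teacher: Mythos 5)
Your proposal is correct and follows essentially the same route as the paper, whose proof is exactly the sketch you flesh out: start from the last line of \eqref{eqlongn}, absorb the boundary condition $\theta\circ\eta=\Bar(f)^\sim_\pr$ by the retraction argument of Proposition \ref{propreprbasic}, and conclude via the bar--cobar adjunction of Proposition \ref{propbarcobarbis} for $\mathcal{O}=\mathsf{e}_n\{n\}$. Your closing observations---that Lemma \ref{lemmadef0en} and Lemma \ref{2lie} reconcile the two convolution $\mathsf{Lie}\{1\}$-structures, and that the operadic twisting $\Tw(\mathsf{Lie}\{1\}\to\mathsf{e}_n\{n\})$ keeps $\Def^n(X\xrightarrow{f}Y)_\mathcal{Q}[1]$ an $\mathsf{e}_n\{n\}$-algebra so the representing object is a genuine $\mathsf{e}_n^*$-coalgebra---correctly identify the inputs the paper's terse proof leaves implicit.
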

\begin{proof}
We start with the last line of \eqref{eqlongn}, and then continue as in the proof of Proposition \ref{propreprbasic}, using the adjunction given in Proposition \ref{propbarcobarbis}. We skip the detail, as the argument is analogous to the proof of Proposition \ref{propreprbasic} (which is, by its own, borrowed from [T2, Prop. 3.2]).
\end{proof}

\subsection{\sc }
Turn back to the definition of the functor $_\mathcal{Q}G_{X,Y}^f$, given in \eqref{gqoriginal}. It follows from this definition that, for a chain of maps of $\mathsf{e}_n{-}\mathcal{Q}$-bimodules in $\Vect(\k)_\Sigma$
$$
X\xrightarrow{f}Y\xrightarrow{g}Z
$$
one gets a map of sets
\begin{equation}\label{compqn}
_\mathcal{Q}G_{Y,Z}^g(a^\prime)\times_\mathcal{Q}G_{X,Y}^f(a)\to _\mathcal{Q}G_{X,Z}^{gf}(a\otimes a^\prime)
\end{equation}
which gives rise to a map of bifunctors $\Coalg_n\times \Coalg_n\to\Sets$:
\begin{equation}
_\mathcal{Q}G^g_{Y,Z}(-_2)\times _\mathcal{Q}G^f_{X,Y}(-_1)\to _\mathcal{Q}G_{X,Z}^{gf}\circ \bigotimes(-_1,-_2)
\end{equation}
Denote by $A^f_\rep$ the representing $n$-coalgebra for the functor $_\mathcal{Q}G^f_{X,Y}(-)$. Then \eqref{compq} gives a map:
\begin{equation}\label{compq2n}
A^f_\rep\otimes A^g_\rep\to A^{gf}_\rep
\end{equation}
of dg coalgebras,
which enjoys the natural associativity for a chain of four maps $X\to Y\to Z\to W$.

We know from Proposition \ref{proprepn} that
$$
A_\rep^f=\Bar_{\mathsf{e}_n\{n\}}(\Def^n(X\xrightarrow{f}Y)_\mathcal{Q}[1])
$$

Consider the case $X=Y$, $f=\id_X$. Then \eqref{compq2n} gives a monoid in the category of dg $n$-coalgebras structure on $A_\rep^{\id}(X)=\Bar_{\mathsf{e}_n\{n\}}(\Def^n(X\xrightarrow{\id}X)_\mathcal{Q})$:
\begin{prop}\label{propshort}
Let $X$ be a $\mathsf{e}_n{-}\mathcal{Q}$-bimodule. Then $$A_\rep^{\id}(X)=\Bar_{\mathsf{e}_n\{n\}}(\Def^n(X\xrightarrow{\id}X)_\mathcal{Q}[1])$$ is a monoid in the category of $\mathsf{e}_n^*$-coalgebras.
\end{prop}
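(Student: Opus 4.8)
The plan is to deduce the statement directly from the composition property \eqref{compqn} of the functor ${}_\mathcal{Q}G_{X,Y}^f$ together with its representability (Proposition \ref{proprepn}), in exact parallel with the cocommutative case of Section \ref{defcomm14}. The first thing to record is that $\Coalg_n$ is a monoidal category under $\otimes$: by Lemma \ref{lemmatensorn} the tensor product $a\otimes a'$ of two $\mathsf{e}_n^*$-coalgebras is again an $\mathsf{e}_n^*$-coalgebra (this is exactly where the Hopf structure on $\mathsf{e}_n$ is used), with monoidal unit the trivial coalgebra $\k$. Hence the phrase ``monoid in the category of $\mathsf{e}_n^*$-coalgebras'' is meaningful, and it suffices to equip $A_\rep^{\id}(X)$ with an associative, unital multiplication with respect to $\otimes$.

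Next I would make the composition map \eqref{compqn} explicit at the level of the functor. Given $\psi\in{}_\mathcal{Q}G_{X,Y}^f(a)$ and $\phi\in{}_\mathcal{Q}G_{Y,Z}^g(a')$, I form the composite
\begin{equation}
a'\star\bigl(a\star\Bar_n(X)\bigr)\xrightarrow{\,\id_{a'}\star\,\psi\,}a'\star\Bar_n(Y)\xrightarrow{\,\phi\,}\Bar_n(Z),
\end{equation}
and identify its source with $(a\otimes a')\star\Bar_n(X)$ using the symmetry of $\otimes$. Two things must be checked. First, the composite is a morphism in $\mathscr{C}(\mathsf{e}_n^*,\mathcal{Q}\{n\})$: the functor $a'\star(-)$ preserves both the $\mathsf{e}_n^*$-coalgebra structure (Lemmas \ref{lemmastarprod} and \ref{lemmatensornbis}) and the right $\mathcal{Q}\{n\}$-action (which lives on the $\Bar_n$-factor and is untouched by $\star\,a'$), so $\id_{a'}\star\psi$ is a morphism in $\mathscr{C}$, as is $\phi$, and hence so is their composite. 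Second, the special-fiber condition holds: restricting along the coaugmentation $\eta_{a'}\otimes\eta_a$ turns $\id_{a'}\star\psi$ into $\Bar(f)$ and $\phi$ into $\Bar(g)$, so the composite restricts to $\Bar(g)\circ\Bar(f)=\Bar(gf)$, as required for membership in ${}_\mathcal{Q}G^{gf}_{X,Z}(a\otimes a')$. This produces the bifunctorial map \eqref{compqn}.

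Finally I would invoke the Yoneda lemma. Since $A_\rep^f$ represents ${}_\mathcal{Q}G^f_{X,Y}$ (Proposition \ref{proprepn}) and $a\otimes a'$ again lies in $\Coalg_n$, the map \eqref{compqn} of bifunctors is represented by the morphism of $\mathsf{e}_n^*$-coalgebras \eqref{compq2n}, namely $A_\rep^f\otimes A_\rep^g\to A_\rep^{gf}$. Specializing to $X=Y=Z$ and $f=g=\id$ gives the multiplication on $A_\rep^{\id}(X)$. Its associativity follows from the associativity of \eqref{compqn} for a chain $X\to Y\to Z\to W$ of four identity maps, which in turn reflects the associativity of $\star$, of $\otimes$, and of ordinary composition of maps; the unit is the coaugmentation $\k\to A_\rep^{\id}(X)$, corresponding under Yoneda to $\id_{\Bar_n(X)}\in{}_\mathcal{Q}G^{\id}_{X,X}(\k)$. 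This exhibits $A_\rep^{\id}(X)$ as a monoid in $\Coalg_n$. The argument is essentially formal once Proposition \ref{proprepn} and Lemma \ref{lemmatensorn} are in hand, so I expect the only delicate point to be the well-definedness of the composition at the functor level --- specifically the $\mathcal{Q}\{n\}$-equivariance and the special-fiber compatibility checked above --- rather than any substantial computation.
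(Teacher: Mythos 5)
Your proposal is correct and takes essentially the same route as the paper: there the monoid structure on $A_\rep^{\id}(X)$ is likewise obtained by reading the composition map \eqref{compqn} off the definition \eqref{gqoriginal}, transporting it through the representability statement of Proposition \ref{proprepn} (Yoneda) to the map \eqref{compq2n} $A^f_\rep\otimes A^g_\rep\to A^{gf}_\rep$, and specializing to $X=Y=Z$, $f=g=\id$. The points you verify explicitly --- the $\mathcal{Q}\{n\}$-equivariance of $\id_{a'}\star\psi$, the special-fiber condition $\Bar(g)\circ\Bar(f)=\Bar(gf)$, and the unit via the coaugmentation --- are exactly what the paper leaves implicit, so your write-up is, if anything, slightly more detailed than the original.
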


\begin{defn}{\rm
Let $A$ be a $\mathsf{e}_n^*$-coalgebra, endowed with a monoid structure, that is, with a map 
\begin{equation}\label{prodbialgn}
A\otimes A\to A
\end{equation}
of $\mathsf{e}_n^*$-coalgebras, which is associative, and has a unit given by the coaugmentation map of $A$. Then we say that $A$ is {\it a $\mathsf{e}_n^*$-bialgebra}. 
}
\end{defn}

Proposition \ref{proprepn} says that the $\mathsf{e}_n^*$-coalgebra $$A_\rep^{\id}(X)=\Bar_{\mathsf{e}_n\{n\}}(\Def^n(X\xrightarrow{\id}X)_\mathcal{Q}[1])$$
is a $\mathsf{e}_n^*$-bialgebra.

We know from Lemma \ref{lemmaopdef} that, as a complex, $\Def^n(X\xrightarrow{\id}X)_\mathcal{Q}[1]=\Def_{\Op}(\mathsf{e}_n\to [X,X]_\mathcal{Q})$. 

Then Proposition \ref{propshort} gives:
\begin{coroll}\label{corollfinal0}
Let $X$ be a $\mathsf{e}_n\{n\}{-}\mathcal{Q}$-bimodule, $g\colon \mathsf{e}_n\{n\}\to [X,X]_{\mathcal{Q}}$ the associated map. Then the operadic deformation complex $\Def_\Op(\mathsf{e}_n\{n\}\xrightarrow{g}[X,X]_\mathcal{Q})$ is a $\mathsf{e}_n\{n\}$-algebra, and $\Bar_{\mathsf{e}_n\{n\}}(\Def_\Op(\mathsf{e}_n\{n\}\xrightarrow{g}[X,X]_\mathcal{Q}))$ has a $\mathsf{e}_n^*$-bialgebra structure.
\end{coroll}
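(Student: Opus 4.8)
The plan is to read the statement off from the two structural results of Section \ref{sectionrel} via the identification furnished by Lemma \ref{lemmaopdef}; the corollary is essentially a repackaging, and the only genuine labour is tracking the operadic shift $\{n\}$ that separates its hypotheses from the $\mathsf{e}_n$-framework in which the preceding propositions are phrased.

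For the first assertion, I would apply Lemma \ref{lemmaopdef} to identify, as a dg Lie algebra,
\[
\Def_\Op(\mathsf{e}_n\{n\}\xrightarrow{g}[X,X]_\mathcal{Q})\simeq \Def^n(X\xrightarrow{\id}X)_\mathcal{Q}[1],
\]
the right-hand side being the relative algebra deformation complex of $(\ref{eqdefen})$ (whose underlying dg Lie algebra coincides with that of $(\ref{defcomplexalg})$, only the extra $\mathsf{e}_n\{n\}$-structure being new). By the operadic-twisting discussion following $(\ref{eqdefen})$, this complex is acted on by $\Tw(\mathsf{Lie}\{1\}\to\mathsf{e}_n\{n\})$, which is weakly equivalent to $\mathsf{e}_n\{n\}$; transporting that action across the isomorphism endows the operadic deformation complex with its $\mathsf{e}_n\{n\}$-algebra structure. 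The point requiring care is that the isomorphism of Lemma \ref{lemmaopdef} respects the full $\mathsf{e}_n\{n\}$-structure and not merely the bracket, for which the relevant input is Lemma \ref{lemmadef0en}, identifying the two a priori different $\mathsf{Lie}\{1\}$-actions.

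For the second assertion, Proposition \ref{propshort} states verbatim that $\Bar_{\mathsf{e}_n\{n\}}(\Def^n(X\xrightarrow{\id}X)_\mathcal{Q}[1])$ is a monoid in the category of $\mathsf{e}_n^*$-coalgebras, which is precisely the definition of an $\mathsf{e}_n^*$-bialgebra. Feeding in the identification of the previous paragraph then yields that $\Bar_{\mathsf{e}_n\{n\}}(\Def_\Op(\mathsf{e}_n\{n\}\xrightarrow{g}[X,X]_\mathcal{Q}))$ is an $\mathsf{e}_n^*$-bialgebra, which is the claim.

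The main obstacle, and the only step requiring actual checking, is reconciling the shift: Section \ref{sectionrel} is set up for $\mathsf{e}_n{-}\mathcal{Q}$-bimodules, whereas here $X$ is an $\mathsf{e}_n\{n\}{-}\mathcal{Q}$-bimodule. I would dispose of this exactly as in the treatment of the general case at the end of Section \ref{defcomm}: pass to $X\{-n\}$, which is an $\mathsf{e}_n{-}\mathcal{Q}\{-n\}$-bimodule by Lemma \ref{lemmashift}(i), apply Proposition \ref{propshort} and Lemma \ref{lemmaopdef} in that setting, and then use the shift-isomorphism $\Def_\Op(\mathcal{O}\xrightarrow{g}\mathcal{P})\simeq\Def_\Op(\mathcal{O}\{n\}\xrightarrow{g\{n\}}\mathcal{P}\{n\})$ together with the identity $[X\{n\},Y\{n\}]_{\mathcal{Q}\{n\}}=([X,Y]_\mathcal{Q})\{n\}$ to transport every structure back, noting $X\{-n\}\{n\}=X$, $(\mathcal{Q}\{-n\})\{n\}=\mathcal{Q}$, and $g=(g')\{n\}$ for the corresponding shifted map $g'$. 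Verifying that the monoid product survives these shift identifications is routine once the objects are matched up, and with that in place the corollary is immediate.
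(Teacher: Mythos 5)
Your proposal is correct and takes essentially the same route as the paper: the paper obtains the corollary by exactly this assembly of Lemma \ref{lemmaopdef} (identifying the operadic deformation complex with $\Def^n(X\xrightarrow{\id}X)_\mathcal{Q}[1]$), the $\mathsf{e}_n\{n\}$-algebra structure coming from Lemma \ref{lemmadef0en} and the operadic-twisting discussion following \eqref{eqdefen}, and Proposition \ref{propshort} for the monoid (bialgebra) structure on the bar construction. Your explicit bookkeeping of the shift via $X\{-n\}$ is if anything more careful than the paper, which handles this point only through the remark that $\Def_\Op(\mathcal{O}\xrightarrow{f}\mathcal{P})=\Def_\Op(\mathcal{O}\{k\}\xrightarrow{f\{k\}}\mathcal{P}\{k\})$ for any $k$.
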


One particular case of previous construction is obtained when $X=\mathcal{Q}=\mathcal{P}$, with the tautological right $\mathcal{P}$-action on $X=\mathcal{P}$. We know that $[\mathcal{P},\mathcal{P}]_\mathcal{P}=\mathcal{P}$, see Section \ref{sectioncatalg}.
We assume we are given a map of operads $f\colon \mathsf{e}_n\to\mathcal{P}$, then $f\{n\}\colon \mathsf{e}_n\{n\}\to\mathcal{P}\{n\}$, which makes $\mathcal{P}\{n\}$ a $\mathsf{e}_n\{n\}{-}\mathcal{P}\{n\}$-bimodule. Lemma \ref{lemmabimod} gives a map of operads 
$$
\mathsf{e}_n\{n\}\to[\mathcal{P}\{n\},\mathcal{P}\{n\}]_{\mathcal{P}\{n\}}=\mathcal{P}\{n\}
$$
which is equal to the map $f\{n\}$. We mention that $\Def_\Op(\mathcal{O}\xrightarrow{f}\mathcal{P})=\Def_\Op(\mathcal{O}\{k\}\xrightarrow{f\{k\}}\mathcal{P}\{k\})$, for any $k$.

One gets:
\begin{coroll}\label{corollfinal}
Let $f\colon \mathsf{e}_n\to \mathcal{P}$ be an operad map. Then $\Def_\Op(\mathsf{e}_n\xrightarrow{f}\mathcal{P})$ is a $\mathsf{e}_n\{n\}$-algebra, and $\Bar_{\mathsf{e}_n\{n\}}(\Def_\Op(\mathsf{e}_n\xrightarrow{f}\mathcal{P}))$ is a $\mathsf{e}_n^*$-bialgebra.
\end{coroll}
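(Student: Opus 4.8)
The plan is to obtain this statement as the special case $X=\mathcal{Q}=\mathcal{P}$ of Corollary \ref{corollfinal0}, after an operadic shift. First I would take $\mathcal{P}$, equipped with its tautological right action on itself, as the right module in the relative framework of Section \ref{sectionrel}. Applying the shift $-\{n\}$ (and using the corollary to Lemma \ref{lemmashift} that $\mathcal{P}\{n\}$ is again an operad) promotes $f\colon\mathsf{e}_n\to\mathcal{P}$ to $f\{n\}\colon\mathsf{e}_n\{n\}\to\mathcal{P}\{n\}$, and this makes $\mathcal{P}\{n\}$ an $\mathsf{e}_n\{n\}{-}\mathcal{P}\{n\}$-bimodule, with left action given by $f\{n\}$ followed by operadic composition and right action given by operadic composition.

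The identification to record is the relative inner-hom computation of Section \ref{sectioncatalg}, namely $[\mathcal{P}\{n\},\mathcal{P}\{n\}]_{\mathcal{P}\{n\}}=\mathcal{P}\{n\}$ as an operad. By Lemma \ref{lemmabimod}, the bimodule structure above yields a map of operads $\mathsf{e}_n\{n\}\to[\mathcal{P}\{n\},\mathcal{P}\{n\}]_{\mathcal{P}\{n\}}=\mathcal{P}\{n\}$; here I would be careful to check that this operad map coincides with $f\{n\}$ itself. This is exactly the adjunction argument in the proof of Lemma \ref{lemmabimod}: the map corresponds to the identity of $\mathcal{P}\{n\}$ as a right $\mathcal{P}\{n\}$-module under $\mathsf{e}_n\{n\}\circ_{\mathsf{e}_n\{n\}}\mathcal{P}\{n\}=\mathcal{P}\{n\}$, and unwinding the tautological left action recovers $f\{n\}$.

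Next I would invoke the shift-invariance of the operadic deformation complex, $\Def_\Op(\mathcal{O}\xrightarrow{f}\mathcal{P})=\Def_\Op(\mathcal{O}\{k\}\xrightarrow{f\{k\}}\mathcal{P}\{k\})$, with $\mathcal{O}=\mathsf{e}_n$ and $k=n$. This identifies $\Def_\Op(\mathsf{e}_n\xrightarrow{f}\mathcal{P})$ with $\Def_\Op(\mathsf{e}_n\{n\}\xrightarrow{f\{n\}}\mathcal{P}\{n\})$, which by the previous paragraph equals $\Def_\Op(\mathsf{e}_n\{n\}\xrightarrow{f\{n\}}[\mathcal{P}\{n\},\mathcal{P}\{n\}]_{\mathcal{P}\{n\}})$. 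Corollary \ref{corollfinal0}, applied with $X=\mathcal{Q}=\mathcal{P}\{n\}$ (so that its hypotheses are met), then gives directly that this complex is an $\mathsf{e}_n\{n\}$-algebra and that $\Bar_{\mathsf{e}_n\{n\}}$ of it carries an $\mathsf{e}_n^*$-bialgebra structure. Transporting back along the shift-invariance isomorphism yields the assertion for $\Def_\Op(\mathsf{e}_n\xrightarrow{f}\mathcal{P})$.

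The main obstacle is bookkeeping rather than any genuine difficulty: one must ensure that the shift-invariance isomorphism $\Def_\Op(\mathsf{e}_n\xrightarrow{f}\mathcal{P})\cong\Def_\Op(\mathsf{e}_n\{n\}\xrightarrow{f\{n\}}\mathcal{P}\{n\})$ is not merely an isomorphism of dg vector spaces but intertwines all the structure produced by Corollary \ref{corollfinal0} — the $\mathsf{e}_n\{n\}$-action, and hence the bialgebra structure on the bar-construction. This is guaranteed by the compatibilities of $-\{n\}$ with $\circ$, with $[-,-]$, and with $\Hom_\lev$ collected in Lemma \ref{lemmashift}, which show that the convolution operad $\Hom_\Op(\mathsf{e}_n^*,\mathsf{e}_n\{n\})$ and its action on the deformation complex are preserved under the shift; so the verification reduces to the naturality of those identifications, which is already in place.
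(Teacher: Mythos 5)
Your proposal is correct and follows essentially the same route as the paper: specializing Corollary \ref{corollfinal0} to $X=\mathcal{Q}=\mathcal{P}\{n\}$ with the tautological right action, identifying the operad map from Lemma \ref{lemmabimod} with $f\{n\}$ via $[\mathcal{P}\{n\},\mathcal{P}\{n\}]_{\mathcal{P}\{n\}}=\mathcal{P}\{n\}$, and transporting along the shift-invariance $\Def_\Op(\mathcal{O}\xrightarrow{f}\mathcal{P})=\Def_\Op(\mathcal{O}\{k\}\xrightarrow{f\{k\}}\mathcal{P}\{k\})$. Your additional attention to the shift-invariance intertwining the $\mathsf{e}_n\{n\}$-algebra structure (via Lemma \ref{lemmashift}) is a point the paper leaves implicit, and it checks out.
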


In the next Section, we link the dg cocommutative bialgebra ${a_\rep^{\id}}(X)$ and the dg $\mathsf{e}_n^*$-bialgebra ${A_\rep^{\id}}(X)$, and investigate the higher structure we have found.

\section{\sc A proof of Theorem \ref{theorem1}}
Section \ref{section61} just reproduces arguments from [T2]. For convenience of the reader, we briefly recall them here.
It shows that $\Def_\Op(\mathsf{e}_n\{n\}\xrightarrow{f\{n\}}\mathcal{P}\{n\})[-n]$ is a homotopy $(n+1)$-algebra. 
Note that here we only use the deformation theory with $\mathsf{e}_n$-coalgebra base, see Section 5.

In Section \ref{section62}, we give an explicit description of the underlying (homotopy) Lie bracket on $\Def_\Op(\mathsf{e}_n\{n\}\xrightarrow{f\{n\}}\mathcal{P}\{n\})[-n][n]=\Def_\Op(\mathsf{e}_n\{n\}\xrightarrow{f\{n\}}\mathcal{P}\{n\})$ of this homotopy $\mathsf{e}_{n+1}$-algebra as the strict operadic convolution Lie bracket. It is proven by comparison of the representation objects $a_\rep(X)$ and $A_\rep(X)$ for the deformation theories with cocommutative and $\mathsf{e}_n$-coalgebra bases, correspondingly. Our argument here is hopefully somewhat more transparent than the one in [T2].
\subsection{\sc }\label{section61}
Let $f\colon \mathsf{e}_n\to\mathcal{P}$ be a morphism of operads; it defines the shifted map of operads $f\{n\}\colon\mathsf{e}_n\{n\}\to\mathcal{P}\{n\}$. The deformation complex $\Def_\Op(\mathsf{e}_n\xrightarrow{f}\mathcal{P})=\Def_{\Op}(\mathsf{e}_n\{n\}\xrightarrow{f\{n\}}\mathcal{P}\{n\})$ enjoys a structure of $\mathsf{e}_n\{n\}$-algebra, by Lemma \ref{lemmadef0en} and the discussion thereafter.

Consider the bar-complex $\Bar_{\mathsf{e}_n\{n\}}(\Def_\Op(\mathsf{e}_n\{n\}\xrightarrow{f\{n\}}\mathcal{P}\{n\}))$ which is an $\mathsf{e}_n^*$-coalgebra.

In general, the underlying complex of $\Bar_{\mathsf{e}_n\{n\}}(X)$ is $\mathsf{e}_n^*\circ X$. As $\mathsf{e}_n=\mathsf{Comm}\circ \mathsf{Lie}\{-n+1\}$, the underlying graded space of 
\begin{equation}
\mathsf{e}_n^*\circ X=\mathsf{Comm}^*\circ (\mathsf{Lie}^*\{n-1\}\circ X)
\end{equation}

Denote $X=\Def_\Op(\mathsf{e}_n\{n\}\xrightarrow{f\{n\}}\mathcal{P}\{n\})$.

Corollary \ref{corollfinal} says that $\Bar_{\mathsf{e}_n\{n\}}(X)$ is a $\mathsf{e}_n^*$-bialgebra. Consider the underlying cocommutative bialgebra. Then the Milnor-Moore theorem [Q, Appendix B] gives a Lie algebra structure on the graded space of primitive elements.
The space of primitive elements is $\mathsf{Lie}^*\{n-1\}\circ X$. In general, if we have a dg cocommutative coalgebra, the space of primitive elements is a subcomplex. The space of primitive elements is closed under the bracket $ab\mp ba$; therefore, $\mathsf{Lie}^*\{n-1\}\circ X$ becomes a Lie algebra. The Lie algebra structure on $\mathsf{Lie}^*\{n-1\}\circ X$ is compatible with the cofree  $\mathsf{Lie}^*\{n-1\}$-coalgebra structure, by 
\begin{equation}\label{lien}
\delta([x,y])=[\delta x,y]+(-1)^{|x|(d-1)}[x,\delta y]
\end{equation}
where $\delta$ is the $\mathsf{Lie}^*\{n-1\}$-cobracket.

The Chevalley-Eilenberg chain complex of the Lie algebra $\mathsf{Lie}^*\{n-1\}\circ X$ is 
$
\mathsf{Comm}^*\{-1\}\circ \mathsf{Lie}^*\{n-1\}\circ X
$
The identity \eqref{lien} is translated to the statement that the Chevalley-Eilenberg differential differentiates the $\mathsf{Lie}^*\{n-1\}$-coalgebra structure, making $\mathsf{Comm}^*\{-1\}\circ \mathsf{Lie}^*\{n-1\}\circ X$ a dg $\mathsf{Comm}^*\{-1\}\circ \mathsf{Lie}^*\{n-1\}$-coalgebra.

One has:
$$
\mathsf{Comm}^*\{-1\}\circ \mathsf{Lie}^*\{n-1\}\circ X=(\mathsf{Comm}^*\{-n-1\}\circ \mathsf{Lie}^*\{-1\}\circ X\{-n\})\{n\}=
(\mathsf{e}_{n+1}^*\{-n-1\}\circ X[-n])[n]
$$
As $(\mathsf{Comm}^*\{-1\}\circ \mathsf{Lie}^*\{n-1\}\circ X,\ d)$ is a dg coalgebra over the cooperad $\mathsf{Comm}^*\{-1\}\circ \mathsf{Lie}^*\{n-1\}$, the shifted by $[-n]$ complex $\mathsf{Comm}^*\{-n-1\}\circ \mathsf{Lie}^*\{-1\}\circ X\{-n\}$ is a coalgebra over the cooperad $\Big(\mathsf{Comm}^*\{-1\}\circ \mathsf{Lie}^*\{n-1\}\Big)\{-n\}=\mathsf{e}_{n+1}^*\{-n-1\}$.

Here have we repeatedly used Lemma \ref{lemmashift}(i).

The conclusion is that $\mathsf{e}_{n+1}^*\{-n-1\}\circ (X[-n])$ is a dg coalgebra over  $\mathsf{e}_{n+1}^*\{-n-1\}$.
By definition, it means that $X[-n]$ is a homotopy $(n+1)$-algebra.

\subsection{\sc }\label{section62}
It remains to prove the second claim of Theorem \ref{theorem1}, describing the Lie bracket of degree $-n$ in terms of the operadic convolution bracket. 
\subsubsection{\sc }
The idea is to link the commutative coalgebra $a_\rep(f)$ representing the functor $_\mathcal{Q}F_{X,Y}^f$ with the $n$-coalgebra $A_\rep(f)$ representing the functor $_\mathcal{Q}G_{X,Y}^f$. 

There is the inclusion functor $i\colon\Coalg\to\Coalg_n$. It admits a right adjoint $R\colon \Coalg_n\to\Coalg$, defined as follows. 
The cocommutative coalgebra $R(a)$ is defined as the biggest cocommutative coalgebra contained in $\Ker\delta$, where $\delta$ is the cobracket (see [T2, Prop. 4.4]). 

Therefore, for any $a\in \Coalg$, one has:
\begin{equation}
\Hom_{\Coalg_n}(i(a), A_\rep(f))=\Hom_{\Coalg}(a, R(A_\rep(f)))
\end{equation}
On the other hand, 
\begin{equation}
\Hom_{\Coalg_n}(i(a),A_\rep(f))=_\mathcal{Q}G_{X,Y}^f(i(a))=_\mathcal{Q}F_{X,Y}^f(a)=\Hom_{\Coalg}(a, a_\rep(f))
\end{equation}
It follows from the Yoneda lemma that 
\begin{equation}\label{eqconclusion}
R(A_\rep(f))=a_\rep(f)
\end{equation}
The functor $R$ is lax-monoidal (it follows either from an explicit computation, or from the general fact that a right adjoint to a colax-monoidal functor is lax-monoidal; the functor $i$ is strict monoidal and in particular colax-monoidal).
Moreover, \eqref{eqconclusion} is compatible with the composition properties
$$
a_\rep(f)\otimes a_\rep(g)\to a_\rep(gf)
$$
and
$$
A_\rep(f)\otimes A_\rep(g)\to A_\rep(gf)
$$
for a chain of morphisms $X\xrightarrow{f}Y\xrightarrow{g}Z$, in the sense that the diagram
\begin{equation}
\xymatrix{
R(A_\rep(f)\otimes A_\rep(g))\ar[r]& R(A_\rep(gf))\ar[r]^{=}&a_\rep(gf)\\
R(A_\rep(f))\otimes R(A_\rep(g))\ar[u]\ar[rr]^{=\otimes =}&&a_\rep(f)\otimes a_\rep(g)\ar[u]
}
\end{equation}
commutes, where the  left vertical arrow is the lax monoidal map. It is straightforward.

Thus we get:
\begin{prop}\label{propthelast}
Let $X$ be a $\mathsf{e}_n{-}\mathcal{Q}$-bimodule. Denote by $a_\rep(\id_X)$ (corresp., $A_\rep(\id_X)$)  the commutative coalgebra (corresp., the $n$-coalgebra), representing the functor $_\mathcal{Q}F_{X,X}^{\id}$ (corresp., $_\mathcal{Q}G_{X,X}^{\id}$). Then the functor $R$ defined above yields a map $R\colon A_\rep(\id_X)\to a_\rep(\id_X)$ which is a map of monoid objects. 
\end{prop}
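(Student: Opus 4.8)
The plan is to deduce the statement from the object-level identification $R(A_\rep(f)) = a_\rep(f)$ of \eqref{eqconclusion} together with the naturality of the two composition products, using that $R$ is lax monoidal. First I would record that, since $i\colon \Coalg\to\Coalg_n$ is strict monoidal (in particular colax monoidal) and $R$ is its right adjoint, doctrinal adjunction equips $R$ with a canonical lax monoidal structure whose constraint $\lambda_{A,A'}\colon R(A)\otimes R(A')\to R(A\otimes A')$ is the mate of the structure map of $i$. A lax monoidal functor carries monoids to monoids, so the monoid $A_\rep(\id_X)$ in $\Coalg_n$ (Proposition \ref{propshort}) is sent by $R$ to a monoid $R(A_\rep(\id_X))$ in $\Coalg$, with multiplication $R(A_\rep(\id_X))\otimes R(A_\rep(\id_X))\xrightarrow{\lambda} R(A_\rep(\id_X)\otimes A_\rep(\id_X))\xrightarrow{R(m)} R(A_\rep(\id_X))$, where $m$ is the multiplication \eqref{compq2n} specialized to $f=g=\id_X$. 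By \eqref{eqconclusion} the underlying coalgebra of this monoid is $a_\rep(\id_X)$, so the entire content of the statement is that this transported multiplication coincides with the intrinsic multiplication \eqref{compq2} of $a_\rep(\id_X)$.

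The crux is then the commutativity of the square displayed just before the statement, taken with $f=g=\id_X$. I would prove it by reducing both sides to the common bifunctorial composition of the defining sets. Indeed, the natural isomorphism ${}_\mathcal{Q}F^f_{X,Y}\cong {}_\mathcal{Q}G^f_{X,Y}\circ i$ — which is precisely what produces \eqref{eqconclusion} via Yoneda — intertwines the composition map \eqref{compqn} on the $G$-side with its $F$-analogue \eqref{compq}, because on both sides the composition is literally the same operation: given $\phi\in {}_\mathcal{Q}G^g(a')$ and $\phi'\in{}_\mathcal{Q}G^f(a)$ as in \eqref{gqoriginal}, one forms $\phi\bullet\phi'\in {}_\mathcal{Q}G^{gf}(a\otimes a')$ by tensoring and composing the underlying maps of $\mathsf{e}_n^*$-coalgebras and right $\mathcal{Q}\{n\}$-modules, and restricting a cocommutative $a$ along $i$ alters neither the maps $\phi,\phi'$ nor their composite, cf. \eqref{fqoriginal}. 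Passing to representing objects and using that $\lambda$ is the adjunction mate, the square commutes by the Yoneda lemma.

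Finally, the unit of $A_\rep(\id_X)$ is its coaugmentation $\k\to A_\rep(\id_X)$, and $R$ sends it to the coaugmentation of $a_\rep(\id_X)$ (the unit object $\k$ is fixed by both $i$ and $R$), so the unit axiom is automatic; associativity is inherited from the associativity of \eqref{compq2n} recorded for a chain $X\to Y\to Z\to W$. Combining the multiplication and unit compatibilities shows that $R(A_\rep(\id_X)) = a_\rep(\id_X)$ is an isomorphism of monoid objects in $\Coalg$, which is the assertion. The only genuinely non-formal point — and hence the main obstacle — is verifying that the abstractly-defined lax constraint $\lambda$ of $R$ (the left vertical arrow of the square) really matches the concrete composition maps; this is where one must unwind the explicit description of $R$ as the largest cocommutative subcoalgebra of $\Ker\delta$ and check that the composition product, defined at the level of the Hom-sets in \eqref{compqn}, passes through the adjunction $i\dashv R$ unchanged. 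Everything else is formal.
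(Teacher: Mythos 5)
Your proposal is correct and follows the paper's own route: the identification $R(A_\rep(f))=a_\rep(f)$ via ${}_\mathcal{Q}G^f_{X,Y}\circ i\cong {}_\mathcal{Q}F^f_{X,Y}$ and Yoneda, lax monoidality of $R$ as right adjoint of the strict (hence colax) monoidal $i$, and the commutativity of the comparison square intertwining \eqref{compq2n} with \eqref{compq2} --- a step the paper merely declares ``straightforward'' and which you usefully unwind by observing that the composition products on both functors are literally the same operation on the defining Hom-sets, so the compatibility passes through the adjunction by Yoneda. No gaps; your treatment of the unit and associativity matches the paper's implicit handling.
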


\subsubsection{\sc }
In the case of a cofree $n$-coalgebra $\mathsf{e}_n^*\circ V$, the result of application of the functor $R$ is the cofree cocommutative coalgebra cogenerated by $V$:
\begin{equation}
R(\mathsf{e}_n^*\circ V)=\mathsf{Comm}^*\circ V
\end{equation}
In our case
\begin{equation}
A_\rep(f)=\Bar_{\mathsf{e}_n\{n\}}(\Def(X\xrightarrow{f}Y)_\mathcal{Q})=\Big(\mathsf{e}_n^*\circ (\Def(X\xrightarrow{f}Y)_\mathcal{Q}), d\Big)
\end{equation}
and
\begin{equation}
a_\rep(f)=\Bar_{\mathsf{Lie}\{1\}}(\Def(X\xrightarrow{f}Y)_\mathcal{Q})=\Big(\mathsf{Comm}^*\circ (\Def(X\xrightarrow{f}Y)_\mathcal{Q}[1])), d^\prime\Big)
\end{equation}

We have a priori two different  Lie algebra structures on $\Def(X\xrightarrow{\id}X)_\mathcal{Q}[1]$, defined from the monoid structures on $a_\rep(\id_X)$ and on $A_\rep(\id_X)$. Proposition \ref{propthelast} and the computation thereafter imply that 
these two Lie algebra structures are equal. On the other hand, we know the one obtained on $a_\rep(\id_X)$, from Lemma \ref{lemmacompcomm}. It completes the proof of Theorem \ref{theorem1}.

\comment
\appendix
\section{\sc Appendix}

\subsection{\sc Proof of Proposition \ref{lemmabm}}

The proof is relied on the following lemmas:
\begin{lemma}\label{lemmaprop1}
Let $G$ be a group, $H\subset G$ a subgroup. Let $V,W$ be $H$-modules.
Then the following is true:
\begin{itemize}
\item[(i)] there is a functorial map
\begin{equation}
\Ind_H^G(V\otimes W)\to \Ind_H^G(V)\otimes \Ind_H^G(W)
\end{equation}
\item[(ii)] there is a functorial map
\begin{equation}
\Coind_H^G(V)\otimes \Coind_H^G(W)\to\Coind_H^G(V\otimes W)
\end{equation}
\item[(iii)] there is a functorial map
\begin{equation}
\Hom(\Ind_H^GV,\Coind_H^GW)\to \Coind_H^G\Hom(V,W)
\end{equation}
\item[(iv)]
\begin{equation}
\Ind_H^G\Hom(V,W)\to\Hom(\Coind_H^GV,\Ind_H^GW)
\end{equation}
\end{itemize}
\end{lemma}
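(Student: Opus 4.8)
The plan is to construct each of the four maps by an explicit formula on the standard models $\Ind_H^G V=\k[G]\otimes_{\k H}V$ and $\Coind_H^G W=\Hom_{\k H}(\k[G],W)$, the latter realized as the space of functions $f\colon G\to W$ with $f(hx)=hf(x)$ for $h\in H$. I regard $\Hom(V,W)$ as $\Hom_\k(V,W)$ equipped with the conjugation action $(h\cdot\phi)(v)=h\phi(h^{-1}v)$. All four maps originate from the Hopf-algebra structure on $\k[G]$: the comultiplication $\Delta(g)=g\otimes g$, which makes $\Ind_H^G$ oplax monoidal and $\Coind_H^G$ lax monoidal, together with the antipode $g\mapsto g^{-1}$.

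First I would treat the two ``diagonal'' maps (i) and (ii), which use only $\Delta$. For (i) I set $g\otimes(v\otimes w)\mapsto(g\otimes v)\otimes(g\otimes w)$; since $\Delta$ is a map of right $\k H$-modules for the diagonal action, this is compatible with the relation $gh\otimes(v\otimes w)=g\otimes(hv\otimes hw)$, so the map is well defined. Dually, for (ii) I set $f\otimes g\mapsto\big[x\mapsto f(x)\otimes g(x)\big]$; the product function is again $H$-equivariant because $f(hx)\otimes g(hx)=hf(x)\otimes hg(x)=h(f(x)\otimes g(x))$, so it indeed lies in $\Coind_H^G(V\otimes W)$. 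Both are manifestly bilinear/linear and natural in $V,W$.

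The maps (iii) and (iv) additionally involve the antipode, and this twist is the only genuinely delicate point. For (iv) I define, for $\phi\in\Hom(V,W)$ and $f\in\Coind_H^G V$,
\[
(g\otimes\phi)\longmapsto\big[\,f\mapsto g\otimes\phi(f(g^{-1}))\,\big]\in\Ind_H^G W .
\]
Checking that this respects the relation $gh\otimes\phi=g\otimes(h\cdot\phi)$ is a short computation in which the left $H$-equivariance $f(h^{-1}g^{-1})=h^{-1}f(g^{-1})$ is exactly what is needed to move the factor $h$ across the tensor $\otimes_{\k H}$ in $\Ind_H^G W$. For (iii), given $\Phi\colon\Ind_H^G V\to\Coind_H^G W$ I define
\[
F_\Phi(x)(v)=\Phi(x^{-1}\otimes v)(x),\qquad x\in G,\ v\in V ,
\]
and the key point is that $F_\Phi$ lands in $\Coind_H^G\Hom(V,W)$: the required identity $F_\Phi(hx)=h\cdot F_\Phi(x)$ reduces, after absorbing $h^{-1}\in H$ into $x^{-1}$ via $x^{-1}h^{-1}\otimes v=x^{-1}\otimes h^{-1}v$, precisely to the $H$-equivariance $f_0(hx)=hf_0(x)$ of the coinduced function $f_0=\Phi(x^{-1}\otimes h^{-1}v)\in\Coind_H^G W$.

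Finally I would observe that functoriality in $V$ and $W$ is immediate from the formulas, and that a direct check shows each of the four maps is in fact $G$-equivariant, hence a morphism of $G$-modules (for (iii) and (iv) one uses the conjugation action on the source). The main obstacle, such as it is, lies entirely in getting the variance right in (iii) and (iv): a naive diagonal-only formula either fails to respect the induced-module relation or fails to land in the coinduced module, and it is the antipode $g\mapsto g^{-1}$ that repairs this. Once the four formulas above are in place, every remaining verification is a one-line manipulation, so I expect no further difficulty.
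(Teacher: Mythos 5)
Your formulas are correct --- I checked well-definedness over the relation $gh\otimes v=g\otimes hv$ in (i) and (iv), the $H$-equivariance computations in (ii) and (iii), and the $G$-equivariance claims, and all of them go through --- but your route is genuinely different from the paper's. The paper derives (i) and (ii) purely formally from the adjunctions $\Hom_G(\Ind_H^GV,W)=\Hom_H(V,\Res W)$ and $\Hom_G(W,\Coind_H^GV)=\Hom_H(\Res W,V)$ (the resulting unit/counit composites unwind to exactly your diagonal maps), then obtains (iii) from (ii) by first treating finite-dimensional $V$ and passing to general $V$ via $V=\colim V_i$, using that $\Ind_H^G$ commutes with colimits and $\Coind_H^G$ with limits; for (iv) it invokes the identification $\Ind_H^G=\Coind_H^G$ for finite groups to gain the needed flexibility. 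Your construction via the Hopf structure on $\k[G]$ --- comultiplication for (i)--(ii), antipode for (iii)--(iv) --- buys two things the paper's argument does not: it requires no finiteness hypotheses whatsoever (the paper's colimit step for (iii) needs $V$ to be a union of finite-dimensional $H$-submodules, automatic for finite $H$ but not in general, and its (iv) explicitly assumes $H$ and $G$ finite, harmless for the intended application to symmetric groups but a real restriction), and it makes $G$-equivariance and functoriality visible by inspection. What the paper's approach buys in exchange is brevity and conceptual packaging: the maps in (i)--(ii) are identified as the canonical oplax/lax structures attached to an adjunction, with no formulas to verify. Both proofs produce the same maps, so either can be spliced in where the other is used.
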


\begin{proof}
The first two statements follow from the adjunctions 
\begin{equation}
\Hom_G(\Ind_H^GV,W)=\Hom_H(V,\Res W)
\end{equation}
and
\begin{equation}
\Hom_G(W,\Coind_H^GV)=\Hom_H(\Res W, V)
\end{equation}
correspondingly.

For the third statement, we note that it follows from (ii) for the case when $V$ is finite-dimensional.
For general $V$, one has $V=\colim V_i$ (the union), where each $V_i$ is finite-dimensional.
Then we have:
\begin{equation}
\begin{aligned}
\ &\Hom(\Ind_H^G(\colim V_i),\Coind_H^GW)=\Hom(\colim\  \Ind_H^G V_i,\Coind_H^GW)=\lim\Hom(\Ind_H^GV_i,\Coind_H^GW)\to\\
&\lim \Coind_H^G\Hom(V_i,W)=\Coind_H^G\lim\Hom(V_i,W)=\Coind_H^G\Hom(\colim\  V_i,W)=\Coind_H^G\Hom(V,W)
\end{aligned}
\end{equation}
Here in the first equality we use that $\Ind_H^G$ is left adjoint and therefore it commutes with colimits, and in the first equality of the second line we use that $\Coind_H^G$ is right adjoint and therefore commutes with the limits.

The proof of (iv) is analogous to (iii). We take to the account that, for $H$ and $G$ finite, one has $\Ind_H^G=\Coind_H^G$. Therefore, either of these two functors commutes with both limits and colimits, what gives us an extra flexibility.

\end{proof}

\begin{lemma}\label{lemmaprop2}
Let $G$ be a group, and $A,B$ be $G$-modules. Then the following statements are true:
\begin{itemize}
\item[(i)] there is a lax-monoidal map of invariants:
\begin{equation}
A^G\otimes B^G\to (A\otimes B)^G
\end{equation}
\item[(ii)] there is a colax-monoidal map of coinvariants
\begin{equation}
(A\otimes B)_G\to A_G\otimes B_G
\end{equation}
\item[(iii)] there is a map
\begin{equation}
\Hom(A_G,B^G)\to \Hom(A,B)^G
\end{equation}
\item[(iv)] 
there is a map
\begin{equation}
 \Hom(A,B)_G\to \Hom(A^G,B_G)
\end{equation}
\end{itemize}
\end{lemma}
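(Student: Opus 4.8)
The plan is to obtain all four maps by elementary manipulations with the invariants and coinvariants functors, using two facts repeatedly: the spaces $A^G$, $A_G$, $B^G$, $B_G$ all carry the trivial $G$-action, and $(-)^G$ (resp.\ $(-)_G$) is the universal trivial-action sub (resp.\ quotient) of a $G$-module. Throughout, $\Hom$ denotes $\Hom_\k$ equipped with the conjugation action $(g\ast f)(x)=g\,f(g^{-1}x)$, so that $\Hom(A,B)^G$ is precisely the space $\Hom_G(A,B)$ of $G$-equivariant maps. No finiteness or characteristic hypothesis on $G$ is needed for any of the four assertions.

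For (i) I would observe that if $a\in A^G$ and $b\in B^G$ then $g(a\otimes b)=(ga)\otimes(gb)=a\otimes b$, so $a\otimes b\in(A\otimes B)^G$; extending bilinearly yields the lax-monoidal structure map. Dually, for (ii) the tensor of the two projections $A\otimes B\to A_G\otimes B_G$ is $G$-equivariant onto a space with trivial action, hence by the universal property of coinvariants it factors uniquely through $(A\otimes B)_G$, giving the colax-monoidal map.

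For (iii), starting from $\phi\colon A_G\to B^G$ I would form the composite
\[
A\xrightarrow{\pi_A}A_G\xrightarrow{\phi}B^G\xrightarrow{\iota_B}B,
\]
with $\pi_A$ the projection and $\iota_B$ the inclusion. Since $\pi_A$ is constant on $G$-orbits and the image of $\phi$ is $G$-fixed, this composite satisfies $f(ga)=f(a)=g\,f(a)$, i.e.\ it is $G$-equivariant and so lies in $\Hom(A,B)^G$; the assignment $\phi\mapsto\iota_B\phi\pi_A$ is linear in $\phi$. Symmetrically, for (iv) I would send $f\in\Hom(A,B)$ to the restriction--corestriction
\[
A^G\xrightarrow{\iota_A}A\xrightarrow{f}B\xrightarrow{\pi_B}B_G .
\]
Evaluating on $x\in A^G$ and using $g^{-1}x=x$ together with the triviality of the $G$-action on $B_G$ gives $\pi_B\big((g\ast f)(x)\big)=\pi_B\big(f(x)\big)$, so the linear map $\Hom(A,B)\to\Hom(A^G,B_G)$ sends every $G$-orbit to a single element and therefore factors through $\Hom(A,B)_G$.

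The only step demanding a little care is the interaction of the conjugation action on $\Hom$ with the passage to $A^G$ and $B_G$ in parts (iii) and (iv); once the equivariance (resp.\ the invariance) of the two composites is checked, the factorizations through $(-)^G$ and $(-)_G$ are automatic and the remaining verifications are purely formal.
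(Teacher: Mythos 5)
Your proof is correct, and all four constructions are the canonical ones; but your route through parts (iii) and (iv) is genuinely different from the paper's. The paper treats (i) and (ii) as clear (as you do), and then for (iii) and (iv) it invokes the characteristic-zero hypothesis: since $\cchar\k=0$ (and the relevant groups are finite symmetric groups), the norm map gives a natural isomorphism $V_G\simeq V^G$, so the (co)invariants functor commutes with both limits and colimits, and the maps are then produced by the same exhaustion argument as in the preceding lemma on induction and coinduction --- first for finite-dimensional modules, then passing to (co)limits. You instead build the maps in (iii) and (iv) directly from the unit and counit of the sub/quotient universal properties: $\phi\mapsto \iota_B\,\phi\,\pi_A$ lands in $\Hom(A,B)^G$ because $\pi_A$ kills the $G$-action on the source and $\iota_B$ has $G$-fixed image, and dually $f\mapsto \pi_B\, f\,\iota_A$ is constant on conjugation orbits, hence factors through $\Hom(A,B)_G$; your equivariance checks are exactly right. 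What your argument buys is generality and transparency: it needs no finiteness of $G$, no characteristic assumption, and no limit--colimit bookkeeping, whereas the paper's version buys uniformity with the method of its companion lemma (where the finiteness/char-$0$ identification of $\Ind$ with $\Coind$, and of invariants with coinvariants, genuinely is used). Since the lemma only asserts the existence of natural maps, your more elementary construction fully suffices, and is arguably the preferable proof.
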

\begin{proof}
The statements (i) and (ii) are clear. For (iii) and (iv), note that in the assumption $\cchar\ k=0$, $V_G\simeq V^G$ for any $V$. Then either of these functors commutes with both limits and colimits. Then we continue as in the proof of Lemma \ref{lemmaprop1}(iii).
\end{proof}

Turn back to the proof of Proposition. 

Let $f\in \Hom(X(k),Y(k))$, $f_i\in\Hom(X_1(n_i),Y_1(n_i))$, $i=1,\dots,k$.
Take $G=\Sigma_{n_1+\dots+n_k}$, $H=\Sigma_{n_1}\times\dots\times\Sigma_{n_k}$. Lemma \ref{lemmaprop1} gives
\begin{equation}
\begin{aligned}
\ &\Ind_{H}^G\Big(\bigotimes_{i=1}^k\Hom(X_1(n_i),Y_1(n_i))\Big)\to
\bigotimes_{i=1}^k\Ind_H^G\Hom(X_1(n_i),Y_1(n_i))\to\\
&\bigotimes_{i=1}^k\Hom(\Coind_H^GX_1(n_i),\Ind_H^GY_1(n_i))\to
\Hom\Big(\bigotimes\Coind_H^GX_1(n_i),\bigotimes\Ind_H^GY_1(n_i)\Big)=\\
&\Hom\Big(\bigotimes\Ind_H^GX_1(n_i),\bigotimes\Coind_H^GY_1(n_i)\Big)\to\Hom\Big(\Ind_H^G\big(\otimes_{i=1}^kX_1(n_i)\big),\Coind_H^G\big(\otimes_{i=1}^kY_1(n_i)\big)\Big)=\\
&\Hom\Big(\Coind_H^G\big(\otimes_{i=1}^kX_1(n_i)\big),\Ind_H^G\big(\otimes_{i=1}^kY_1(n_i)\big)\Big)
\end{aligned}
\end{equation}
Both sides are representations of the group $G$, they enjoy as well a commuting with $G$-action an action of the symmetric group $G_1=\Sigma_k$. 
Applying Lemma \ref{lemmaprop2} to $G=G_1$, we get the statement of Proposition.

\endcomment

\bigskip
{\small
\noindent {\sc Universiteit Antwerpen, Campus Middelheim, Wiskunde en Informatica, Gebouw G\\
Middelheimlaan 1, 2020 Antwerpen, Belgi\"{e}}}

\vspace{1mm}

{\small
\noindent{\sc Laboratory of Algebraic Geometry,
National Research University Higher School of Economics,
Moscow, Russia}}

\bigskip

\noindent{{\it e-mail}: {\tt Boris.Shoikhet@uantwerpen.be}}
\end{document}